\journalname{Mathematische Zeitschrift}
\journalname{\ }
  \def\@Opargbegintheorem#1#2#3#4{#4\trivlist
      \item[]{#3#2\@thmcounterend\ }}%
  \def\@Opargbegintheorem#1#2#3#4{#4\trivlist
      \item[\hskip\labelsep{#3#1}]{#3(#2)\@thmcounterend\ }}%
\newcommand{\Z}{\mathbb{Z}}
\newcommand{\pres}[2]{\langle {#1}\;|\;{#2} \rangle}
\newcommand{\twolinepres}[3]{
\Big \langle {#1} \Big | \begin{array}{ll@{}}
         {#2}\\
         {#3}\end{array} \Big \rangle
}
\newcommand{\threelinepres}[4]{
\Bigg \langle {#1} \Bigg | \begin{array}{ll@{}}
         {#2}\\
         {#3}\\
         {#4}\end{array} \Bigg \rangle
}
\newcommand{\gpres}[1]{\langle {#1} \rangle}
\def\'{^\prime}
\def\"{^{\prime\prime}}
\newcommand{\ra}{\rightarrow}
\def\ra{\rightarrow}
\renewcommand{\gcd}{}
\newtheorem{maintheorem}{Theorem}
\newtheorem{maincorollary}[maintheorem]{Corollary}
\let\c@proposition\c@theorem
\let\c@corollary\c@theorem
\let\c@lemma\c@theorem
\numberwithin{theorem}{section}
\numberwithin{lemma}{section}
\numberwithin{proposition}{section}
\numberwithin{corollary}{section}
\begin{document}

\title{Efficient Finite Groups Arising in the Study of Relative Asphericity\thanks{Part of this research was carried out during a visit by the second named author to the Department of Mathematics at Oregon State University in July 2014. That visit was financed by a London Mathematical Society Scheme 4 grant (ref.\,41332) and by the OSU College of Science and Department of Mathematics. The second named author would like to thank the LMS and OSU for its support and the OSU Department of Mathematics for its hospitality during that visit.}}


\author{William A. Bogley \and Gerald Williams
}


\institute{William A.Bogley \at
              Department of Mathematics,
              Kidder Hall 368,
              Oregon State University,
              Corvallis,
              OR 97331-4605,
              USA. \\
              \email{ Bill.Bogley@oregonstate.edu }           
           \and
           Gerald Williams\at
              Department of Mathematical Sciences,
              University of Essex,
              Wivenhoe Park,
              Colchester,
              Essex CO4 3SQ,
              U.K.\\
              \email{ gwill@essex.ac.uk }           
}

\date{Received: date / Accepted: date}
\date{\ }

\def\makeheadbox{{%
}}

\maketitle

\begin{abstract}
We study a class of two-generator two-relator groups, denoted \linebreak $J_n(m,k)$, that arise in the study of relative asphericity as groups satisfying a transitional curvature condition. Particular instances of these groups occur in the literature as finite groups of intriguing orders. Here we find infinite families of non-elementary virtually free groups and of finite metabelian non-nilpotent groups, for which we determine the orders. All Mersenne primes arise as factors of the orders of the non-metacyclic groups in the class, as do all primes from other conjecturally infinite families of primes. We classify the finite groups up to isomorphism and show that our class overlaps and extends a class of groups $F^{a,b,c}$ with trivalent Cayley graphs that was introduced by C.M.Campbell, H.S.M.Coxeter, and E.F.Robertson. The theory of cyclically presented groups informs our methods and we extend part of this theory (namely, on connections with polynomial resultants) to ``bicyclically presented groups'' that arise naturally in our analysis. As a corollary to our main results we obtain new infinite families of finite metacyclic generalized Fibonacci groups.
\keywords{Relative presentation \and asphericity \and  balanced presentation \and  deficiency zero \and  cyclically presented group \and  efficient group \and  Fibonacci group \and resultant \and Mersenne prime \and Gaussian-Mersenne prime.}
\subclass{20F05 \and 20F16 \and 57M35}
\end{abstract}

\section{Introduction}\label{sec:intro}

An abstract presentation for a finite group must have at least as many defining relations as it has generators. In the limiting case where a finite group admits a presentation that is \textit{balanced}, in the sense that the number of defining relations is equal to the number of generators, the relations interact in economical and interesting ways to ensure finiteness. These interactions are manifested in terms of \textit{identities among relations}~\cite{Sieradski80,Sieradski93} that in turn give rise to \textit{spherical van Kampen diagrams}~\cite{CCH81} or, dually, \textit{spherical pictures}~\cite{BogleyPride93}. For group presentations, whether balanced or not, the term \textit{asphericity} has been used in various ways but always refers to the absence of nontrivial identities among relations. See~\cite{CCH81} for a precise discussion. The purpose of this paper is to demonstrate that interesting groups and presentations occur as aspherical presentations transition to non-aspherical ones by investigating one such family in detail.

We examine the transition to asphericity in the context of \textit{relative presentations} $\pres{H,{\bf y}}{\bf r}$. Here $H$ is a group, $\bf y$ is a set disjoint from $H$, and $\bf r$ is a set of words representing elements in the free product $H \ast F$ where $F$ is the free group with basis $\bf y$. The elements of $H$ that occur in the defining relations $\bf r$ are the \textit{coefficients} and the relative presentation defines the group $G = (H \ast F)/\gpres{\mathbf{r}}$, which is the quotient of the free product by the smallest normal subgroup containing $\mathbf{r}$. Following~\cite{BogleyPride}, the \textit{cellular model} for $\pres{H,{\bf y}}{\bf r}$ is a relative two-complex $(Y,X)$ where $X$ is a $K(H,1)$-complex and the inclusion $X \rightarrow Y$ induces the natural homomorphism $H \rightarrow G$ on fundamental groups. Attaching additional higher dimensional cells to account for proper power relators $r = \dot{r}^{e(r)} \in \mathbf{r}$, if any, we obtain an augmented cellular model $M$. Motivated by~\cite[Theorem 4.1]{BogleyPride}, we say that the relative presentation $\pres{H,{\bf y}}{\bf r}$ is \textit{aspherical} if the second relative homotopy group $\pi_2(M,X)$ is trivial. This formulation of asphericity, which is slightly more general than that given in~\cite{BogleyPride}, is equivalent to saying that the homomorphism $H \ra G$ is injective and that $M$ is a $K(G,1)$-complex.

The papers~\cite{Ahmad,AhmadAlMullaEdjvet14,AldwaikEdjvet14,BaikBogleyPride,BogleyPride,Davidson09,Edjvet94,EdjvetJuhasz14,HowieMetaftsis,Metaftsis} all contain detailed classifications of asphericity as it occurs in classes of one-relator relative presentations of the form $\pres{H,y}{r}$ where the single relator $r$ has free product length four in $H \ast \gpres{y}$. Letting $m$ denote the exponent sum of the letter $y$ in the relator $r$, such a relator can be written in the form $r = y^{m-k}gy^kh$ where $k$ is a non-zero integer and the coefficients $g,h$ lie in $H$. The general approach in all of these papers is to fix $m$ and $k$ and then classify asphericity according to the occurrences of the coefficients $g, h \in H$. The analyses become increasingly complex as the sum $|m-k| + |k|$ increases, and there are exceptional cases where the asphericity status remains unresolved. Nevertheless, some consistent themes have emerged. In Theorems~A and~B of~\cite{HowieMetaftsis,Metaftsis} it is shown that the relative presentations considered there are aspherical unless the coefficients generate a group that is either finite cyclic or else a quotient of a finite triangle group. The same statement holds true for the cases treated in~\cite{BaikBogleyPride,BogleyPride,Davidson09,Edjvet94}, so in all these cases a transition from aspherical presentations to non-aspherical ones occurs when the coefficients generate a group that is either finite cyclic or else a quotient of a finite triangle group. Further, in the cases considered in~\cite{BogleyPride},\cite{BaikBogleyPride},\cite{Edjvet94},\cite{HowieMetaftsis}, if the coefficients satisfy the positive curvature condition $1/\mathrm{o}(g)+1/\mathrm{o}(h)+1/\mathrm{o}(gh^{-1})>1$ (where $h\neq g^{-1}$ and $\mathrm{o}(g)$ denotes the order of $g$ in $H$), then there are constructions of essential spherical pictures that employ the combinatorial structure of the Platonic solids. See~\cite[Theorem 4(3)]{BaikBogleyPride},  \cite[Theorem 3.4 and Figure~6]{BogleyPride}, \cite[Theorem 1.1(a)]{Edjvet94}, \cite[Lemma 10, Lemma 12]{HowieMetaftsis}. Even simpler constructions apply when $g = h$~\cite[page 288]{BaikBogleyPride}, \cite[page 23]{BogleyPride}.

The case where the coefficients generate a finite cyclic group is of general interest for several reasons. First, when the coefficients of a one-relator relative presentation generate a finite cyclic group, we are effectively considering groups that admit (ordinary) two-generator two-relator presentations of the form $\pres{t,y}{t^n,W}$. If a finite group $G$ admits an $m$-generator $p$-relator presentation, then the difference $p-m$ is bounded below by the minimum number of elements required to generate the Schur multiplicator $H_2(G,\Z)$~\cite{Epstein61}. The finite group $G$ is \textit{efficient} if it admits a presentation that realizes this bound. Thus any finite group that admits a balanced presentation has trivial Schur multiplicator and is efficient. The first examples of inefficient groups were exhibited in 1965~\cite{Swan65}, but there are longstanding and ongoing efforts to identify classes of efficient finite groups. See e.~g.~\cite{CCR77} and~\cite[Chapter III]{Johnson76}. Many finite simple groups are efficient~\cite{CHRR14} and it has been suggested that the covering group of every finite simple group should admit a two-generator two-relator presentation~\cite{GKKL07,Wilson06}. In another direction, there is no known bound on the derived length for soluble finite groups that admit two-generator two-relator presentations~\cite{HHKR99}.


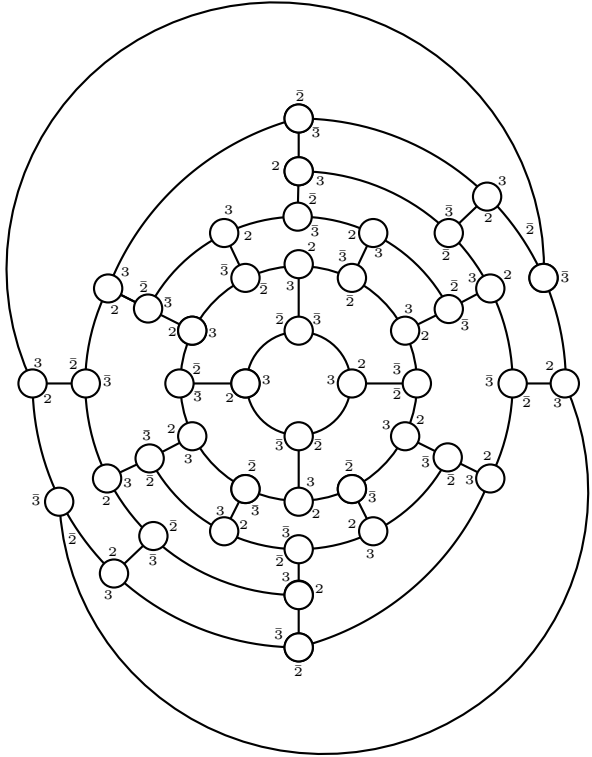
\begin{figure}
\tiny
\begin{center}
\psset{xunit=0.7cm,yunit=0.7cm,algebraic=true,dotstyle=o,dotsize=3pt 0,linewidth=0.8pt,arrowsize=3pt 2,arrowinset=0.25}
\begin{pspicture*}(4,-11)(18,5)
\psline(11,-2)(11,-0.74)
\psline(12,-3)(13.22,-3)
\psline(11,-4)(11,-5.24)
\psline(10,-3)(8.76,-3)
\psline(10,-1)(9.6,-0.15)
\psline(12,-1)(12.4,-0.15)
\psline(13,-4)(13.8,-4.4)
\psline(12,-5)(12.4,-5.8)
\psline(10,-5)(9.6,-5.8)
\psline(9,-4)(8.2,-4.42)
\psline(14.6,-4.8)(13.8,-4.4)
\psline(7.4,-4.8)(8.2,-4.42)
\psline(11,1.02)(11,2.02)
\psline(11,-7)(11,-8)
\psline(7,-3)(6,-3)
\psline(15.02,-3)(16,-3)
\psline(13.82,-0.15)(14.54,0.54)
\psline(8.27,-5.89)(7.53,-6.6)
\parametricplot{-0.03275161250070191}{3.5851350444700296}{1*5.05*cos(t)+0*5.05*sin(t)+10.56|0*5.05*cos(t)+1*5.05*sin(t)+-0.83}
\parametricplot{-3.1042846358836798}{0.4258119532470605}{1*4.97*cos(t)+0*4.97*sin(t)+11.47|0*4.97*cos(t)+1*4.97*sin(t)+-5.05}
\parametricplot{-0.009372188253242264}{1.5707963267948966}{1*5.02*cos(t)+0*5.02*sin(t)+11|0*5.02*cos(t)+1*5.02*sin(t)+-3}
\parametricplot{3.141592653589793}{4.71238898038469}{1*5*cos(t)+0*5*sin(t)+11|0*5*cos(t)+1*5*sin(t)+-3}
\parametricplot{2.675712905439022}{4.71238898038469}{1*4.01*cos(t)+0*4.01*sin(t)+11|0*4.01*cos(t)+1*4.01*sin(t)+-3}
\parametricplot{-0.46364760900080615}{1.5707963267948966}{1*4.02*cos(t)+0*4.02*sin(t)+11|0*4.02*cos(t)+1*4.02*sin(t)+-3}
\parametricplot{1.8276472666413774}{2.779350745109478}{1*5.26*cos(t)+0*5.26*sin(t)+12.34|0*5.26*cos(t)+1*5.26*sin(t)+-3.06}
\parametricplot{4.960678889687757}{5.917383752445073}{1*5.23*cos(t)+0*5.23*sin(t)+9.71|0*5.23*cos(t)+1*5.23*sin(t)+-2.93}
\pscircle(11,-3){0.7}
\rput[tl](11.92,-0.1){$2$}
\rput[tl](11.16,-0.32){$2$}
\rput[tl](9.96,-0.12){$2$}
\rput[tl](14.48,0.22){$2$}
\rput[tl](10.48,1.20){$2$}
\rput[tl](15.63,-2.58){$2$}
\rput[tl](11.92,-5.58){$2$}
\rput[tl](14.46,-4.28){$2$}
\rput[tl](13.20,-3.6){$2$}
\rput[tl](12.1,-2.54){$2$}
\rput[tl](9.88,-5.6){$2$}
\rput[tl](11.24,-5.38){$2$}
\rput[tl](7.42,-6.10){$2$}
\rput[tl](7.3,-5.14){$2$}
\rput[tl](6.20,-3.2){$2$}
\rput[tl](9.62,-3.18){$2$}
\rput[tl](8.54,-1.94){$2$}
\rput[tl](8.52,-3.78){$2$}
\rput[tl](14.84,-0.86){$2$}
\rput[tl](13.29,-2.02){$2$}
\rput[tl](7.46,-1.52){$2$}
\rput[tl](7.66,-0.8){$3$}
\rput[tl](9.6,0.38){$3$}
\rput[tl](14.74,0.92){$3$}
\rput[tl](15.78,-3.28){$3$}
\rput[tl](14.13,-4.74){$3$}
\rput[tl](12.26,-6.14){$3$}
\rput[tl](10.68,-6.58){$3$}
\rput[tl](7.34,-6.92){$3$}
\rput[tl](7.7,-4.8){$3$}
\rput[tl](6.02,-2.52){$3$}
\rput[tl](9.30,-1.96){$3$}
\rput[tl](10.76,-1.06){$3$}
\rput[tl](12.56,-3.70){$3$}
\rput[tl](11.14,-4.78){$3$}
\rput[tl](14.18,-0.92){$3$}
\rput[tl](12.42,-0.44){$3$}
\rput[tl](11.32,0.92){$3$}
\rput[tl](8.84,-4.34){$3$}
\rput[tl](10.26,-1.1){$\bar{2}$}
\rput[tl](9.00,-2.62){$\bar{2}$}
\rput[tl](10.56,-1.68){$\bar{2}$}
\rput[tl](11.18,0.6){$\bar{2}$}
\rput[tl](13.68,-0.46){$\bar{2}$}
\rput[tl](13.84,-1.04){$\bar{2}$}
\rput[tl](15.26,0.04){$\bar{2}$}
\rput[tl](10.94,2.56){$\bar{2}$}
\rput[tl](6.68,-2.52){$\bar{2}$}
\rput[tl](8,-1.06){$\bar{2}$}
\rput[tl](8.56,-5.6){$\bar{2}$}
\rput[tl](11.86,-4.46){$\bar{2}$}
\rput[tl](12.76,-3.1){$\bar{2}$}
\rput[tl](15.20,-3.24){$\bar{2}$}
\rput[tl](8.12,-4.74){$\bar{2}$}
\rput[tl](10.56,-6.26){$\bar{2}$}
\rput[tl](10.91,-8.34){$\bar{2}$}
\rput[tl](13.78,-4.72){$\bar{2}$}
\rput[tl](6.66,-5.84){$\bar{2}$}
\rput[tl](11.24,1.86){$\bar{3}$}
\rput[tl](13.74,0.38){$\bar{3}$}
\rput[tl](14.50,-2.84){$\bar{3}$}
\rput[tl](12.76,-2.62){$\bar{3}$}
\rput[tl](10.55,-4.06){$\bar{3}$}
\rput[tl](10.10,-5.30){$\bar{3}$}
\rput[tl](5.98,-5.1){$\bar{3}$}
\rput[tl](15.92,-0.88){$\bar{3}$}
\rput[tl](11.74,-0.48){$\bar{3}$}
\rput[tl](11.22,0.00){$\bar{3}$}
\rput[tl](12.30,-5.04){$\bar{3}$}
\rput[tl](8.06,-3.88){$\bar{3}$}
\rput[tl](7.32,-2.84){$\bar{3}$}
\rput[tl](11.26,-1.68){$\bar{3}$}
\rput[tl](10.54,-7.64){$\bar{3}$}
\rput[tl](8.18,-6.24){$\bar{3}$}
\rput[tl](9.44,-5.32){$3$}
\rput[tl](10.04,-4.46){$\bar{2}$}
\rput[tl](11.30,-6.8){$2$}
\rput[tl](13.3,-4.38){$\bar{3}$}
\rput[tl](14.04,-1.78){$\bar{3}$}
\rput[tl](10.31,-2.84){$3$}
\rput[tl](11.52,-2.84){$3$}
\rput[tl](11.88,-1.34){$\bar{2}$}
\rput[tl](8.46,-1.36){$\bar{3}$}
\rput[tl](9.52,-0.74){$\bar{3}$}
\rput[tl](12.98,-1.5){$3$}
\rput[tl](11.28,-4.06){$\bar{2}$}
\pscircle(11,-3){1.57}
\parametricplot{3.6109382066235782}{5.81953769817878}{1*3.14*cos(t)+0*3.14*sin(t)+11|0*3.14*cos(t)+1*3.14*sin(t)+-3}
\psline(11,-7.01)(11,-6.14)
\parametricplot{0.4636476090008061}{2.677945044588987}{1*3.16*cos(t)+0*3.16*sin(t)+11|0*3.16*cos(t)+1*3.16*sin(t)+-3}
\psline(10.98,0.16)(11,1.02)
\psline(9,-2)(8.17,-1.58)
\psline(8.17,-1.58)(7.42,-1.2)
\psline(13.82,-1.59)(14.59,-1.18)
\psline(13,-2)(13.82,-1.59)
\psline(8.17,-1.58)(7.42,-1.2)
\rput[tl](10.68,-5.7){$\bar{3}$}
\rput[tl](9.02,-3.14){$\bar{3}$}
\pscircle[fillcolor=white,fillstyle=solid](10,-3){0.2}
\pscircle[fillcolor=white,fillstyle=solid](10,-3){0.2}
\pscircle[fillcolor=white,fillstyle=solid](11,-2){0.2}
\pscircle[fillcolor=white,fillstyle=solid](12,-3){0.2}
\pscircle[fillcolor=white,fillstyle=solid](11,-4){0.2}
\pscircle[fillcolor=white,fillstyle=solid](11,-0.74){0.2}
\pscircle[fillcolor=white,fillstyle=solid](13.22,-3){0.2}
\pscircle[fillcolor=white,fillstyle=solid](11,-5.24){0.2}
\pscircle[fillcolor=white,fillstyle=solid](8.76,-3){0.2}
\pscircle[fillcolor=white,fillstyle=solid](9,-2){0.2}
\pscircle[fillcolor=white,fillstyle=solid](10,-1){0.2}
\pscircle[fillcolor=white,fillstyle=solid](12,-1){0.2}
\pscircle[fillcolor=white,fillstyle=solid](13,-2){0.2}
\pscircle[fillcolor=white,fillstyle=solid](13,-4){0.2}
\pscircle[fillcolor=white,fillstyle=solid](12,-5){0.2}
\pscircle[fillcolor=white,fillstyle=solid](10,-5){0.2}
\pscircle[fillcolor=white,fillstyle=solid](9,-4){0.2}
\pscircle[fillcolor=white,fillstyle=solid](9.6,-0.15){0.2}
\pscircle[fillcolor=white,fillstyle=solid](12.4,-0.15){0.2}
\pscircle[fillcolor=white,fillstyle=solid](13.8,-4.4){0.2}
\pscircle[fillcolor=white,fillstyle=solid](12.4,-5.8){0.2}
\pscircle[fillcolor=white,fillstyle=solid](9.6,-5.8){0.2}
\pscircle[fillcolor=white,fillstyle=solid](8.2,-4.42){0.2}
\pscircle[fillcolor=white,fillstyle=solid](14.6,-1.2){0.2}
\pscircle[fillcolor=white,fillstyle=solid](15.02,-3){0.2}
\pscircle[fillcolor=white,fillstyle=solid](14.6,-4.8){0.2}
\pscircle[fillcolor=white,fillstyle=solid](7.42,-1.2){0.2}
\pscircle[fillcolor=white,fillstyle=solid](7,-3){0.2}
\pscircle[fillcolor=white,fillstyle=solid](7.4,-4.8){0.2}
\pscircle[fillcolor=white,fillstyle=solid](11,1.02){0.2}
\pscircle[fillcolor=white,fillstyle=solid](11,2.02){0.2}
\pscircle[fillcolor=white,fillstyle=solid](11,-7){0.2}
\pscircle[fillcolor=white,fillstyle=solid](11,-8){0.2}
\pscircle[fillcolor=white,fillstyle=solid](6,-3){0.2}
\pscircle[fillcolor=white,fillstyle=solid](16,-3){0.2}
\pscircle[fillcolor=white,fillstyle=solid](13.82,-0.15){0.2}
\pscircle[fillcolor=white,fillstyle=solid](14.54,0.54){0.2}
\pscircle[fillcolor=white,fillstyle=solid](15.6,-1){0.2}
\pscircle[fillcolor=white,fillstyle=solid](8.27,-5.89){0.2}
\pscircle[fillcolor=white,fillstyle=solid](7.53,-6.6){0.2}
\pscircle[fillcolor=white,fillstyle=solid](6.5,-5.24){0.2}
\pscircle[fillcolor=white,fillstyle=solid](15.6,-1){0.2}
\pscircle[fillcolor=white,fillstyle=solid](11,2.02){0.2}
\pscircle[fillcolor=white,fillstyle=solid](11,-8){0.2}
\pscircle[fillcolor=white,fillstyle=solid](11,1.02){0.2}
\pscircle[fillcolor=white,fillstyle=solid](11,-7.01){0.2}
\pscircle[fillcolor=white,fillstyle=solid](11,-6.14){0.2}
\pscircle[fillcolor=white,fillstyle=solid](10,-5){0.2}
\pscircle[fillcolor=white,fillstyle=solid](12,-5){0.2}
\pscircle[fillcolor=white,fillstyle=solid](9,-2){0.2}
\pscircle[fillcolor=white,fillstyle=solid](13.82,-1.59){0.2}
\pscircle[fillcolor=white,fillstyle=solid](8.17,-1.58){0.2}
\pscircle[fillcolor=white,fillstyle=solid](10.98,0.16){0.2}
\end{pspicture*}
\normalsize
\end{center}
\vspace{-1cm}
\caption{Spherical picture over $J_6(3,1)$.\label{fig:sphere}}
\end{figure}

A second source of general interest in presentations of the form $\pres{t,y}{t^n,W}$ comes from the study of cyclically presented groups. Let $w$ be a word representing an element in the free group $F_n$ with basis $\{x_0, \ldots , x_{n-1}\}$ and let $\theta$ be the \textit{shift automorphism} of $F_n$ given by $\theta(x_i) = x_{i+1}$ with subscripts modulo $n$. The \textit{cyclically presented group} $G = G_n(w)$ is defined by the presentation
\begin{equation}
\pres{x_0, x_1, \ldots, x_{n-1}}{w, \theta(w), \ldots, \theta^{n-1}(w)}\label{eq:cycpresdef}
\end{equation}
so the shift defines an automorphism $\theta \in \mathrm{Aut}(G_n(w))$ of exponent $n$. It is well-known that by setting $x_i=t^iyt^{-i}$ and $t^n=1$, the split extension $E_n(w) = G_n(w) \rtimes_\theta \Z_n$ by the cyclic group of order $n$ admits a two-generator two-relator presentation of the form $\pres{t,y}{t^n,W}$. For the cyclically presented groups considered in each of~\cite{CampbellRobertsonLMS75,CampbellRobertsonEMS75,CampbellThomas86,EdjvetJuhasz14,JWW,Prishchepov95,Thomas91,WilliamsRevisited}, it is possible to express the relator $W$ for the split extension as a length four word in the free product $\gpres{t}*\gpres{y}$.

To show that a relative presentation is not aspherical, a direct approach is to construct a suitable spherical picture representing an essential spherical map into the cellular model. This requires that certain relations hold amongst the coefficients. The precise statement is given in~\cite[(1.5)]{BogleyPride}, but the key point is that for the purpose of constructing pictures, the more relations, the better. In particular, the case where the coefficients $g,h$ generate a finite cyclic group presents many opportunities for these relations to arise. Another way to show that a relative presentation is not aspherical is to show that the group that it defines contains an element of finite order that is not conjugate to an element of the coefficient group~\cite[(0.4)]{BogleyPride}. For a relative presentation of the form $\pres{H,y}{y^{m-k}gy^kh}$, if $g = h^{-1}$ has finite order in $H$, then a suitable power of $y^{m-k}$ is equal to $y^{k}$. Barring exceptional circumstances, this means that $y$ is an element of finite order that is not conjugate to an element of $H$. Further, when $(m,k)=1$ it follows from a result of Pride~\cite{PrideExactlyTwo} that this relative presentation defines a finite group. We record this known result explicitly in Corollary~\ref{cor:h=ginversefinite}.

For relators of the form $r = y^{m-k}gy^kh$, these considerations point to a limiting case where the asphericity status of the relative presentation $\pres{H,{\bf y}}{\bf r}$ is not obvious. Namely, when $H=\gpres{g,h}$ is finite cyclic of order~$n$, say, $g\neq h^{\pm 1}$, and $1/\mathrm{o}(g) + 1/\mathrm{o}(h) + 1/\mathrm{o}(gh^{-1}) =1$. If $gh^{-1}$ does not generate $H$ then both finite and infinite groups can arise and there are cases where the presentation is not aspherical and cases where the asphericity status remains unresolved (for example, case~E3 of~\cite[page~280]{BaikBogleyPride}), though we do not know of any examples where the presentation is known to be aspherical. In this paper we consider the complementary condition, where $gh^{-1}$ generates $H$. The only cases that meet all these requirements are those that satisfy $n = 4$ and $\{\mathrm{o}(g), \mathrm{o}(h)\} = \{2,4\}$, or else $n = 6$ and $\{\mathrm{o}(g), \mathrm{o}(h)\} = \{2, 3\}$. This leads to the ordinary group presentations
\[J_n(m,k) = \pres{t,y}{t^n, y^{m-k}t^3y^kt^2}\]
where $n = 4$ or $6$. There are variations of the $J_n(m,k)$ that also arise along the transition to asphericity, including the exceptional cases occurring in~\cite{BaikBogleyPride,Davidson09,Edjvet94,HowieMetaftsis,Metaftsis}, for which the asphericity status is unresolved. These and other families of presentations arising within general studies of relative asphericity are deserving of investigation for their group-theoretic, combinatorial, and number-theoretic properties. The groups and presentations $J_n(m,k)$ are the focus of our work in this paper. Our first result is that very few aspherical relative presentations occur in the situations that we are considering.
\begin{maintheorem}\label{mainthm:asphericity} Let $n = 4$ or $6$ and let $g,h$ be elements of a group $H$ such that ${1}/{\mathrm{o}(g)} +$ ${1}/{\mathrm{o}(h)} +{1}/{\mathrm{o}(gh^{-1})} = 1$ and $\gpres{g,h} = \gpres{gh^{-1}} \cong \Z_n$, $h\neq g^{-1}$. Given integers $m$ and $k$, the following are equivalent:

\begin{enumerate}
\item[1.] the relative presentation $\pres{H,y}{y^{m-k}gy^kh}$ is aspherical;
\item[2.] $m = \pm1$ and $k$ is congruent to either $0$ or $m$ modulo $n$;
\item[3.] for the group $G$ defined by the relative presentation $\pres{H,y}{y^{m-k}gy^kh}$, the natural homomorphism $H \ra G$ is an isomorphism.
\end{enumerate}
\end{maintheorem}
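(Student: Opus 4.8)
The plan is to prove the two biconditionals $(2)\Leftrightarrow(3)$ and $(1)\Leftrightarrow(2)$, which together give the full equivalence while sidestepping the awkward implications $(1)\Rightarrow(3)$ and $(3)\Rightarrow(1)$ (asphericity alone should not be expected to force $H\ra G$ \emph{onto}). First I would normalise the data. Setting $x=gh^{-1}$, the hypotheses $\gpres{g,h}=\gpres{x}\cong\Z_n$ together with the curvature equation and the order constraints pin down $\{g,h\}$ as a definite pair of powers of the generator $x$, up to the automorphism $x\mapsto x^{-1}$. Extending an automorphism of $H$ over $H\ast\gpres{y}$ by fixing $y$ is an isomorphism of relative presentations, so I may assume the standard form $g=t^3$, $h=t^2$, $H=\gpres{t}\cong\Z_n$; then $G$ is exactly $J_n(m,k)$ and (3) reads $J_n(m,k)\cong\Z_n$. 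The two admissible order assignments are interchanged by the presentation symmetry $(m,k)\mapsto(m,m-k)$, under which condition (2) is invariant, so no generality is lost.

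For $(2)\Leftrightarrow(3)$ I would argue algebraically. Abelianising, the relator contributes the single relation $m\bar y+5\bar x=0$, so $G^{ab}$ is the cokernel of $\bigl(\begin{smallmatrix}n&0\\5&m\end{smallmatrix}\bigr)$, of order $|nm|$; hence $G\cong\Z_n$ forces $m=\pm1$, giving one half of the $m$-condition. Conversely, when $m=\pm1$ the assignment $t\mapsto t$, $y\mapsto t^{\mp5}$ is a well-defined retraction $G\twoheadrightarrow\Z_n$ splitting $H\ra G$, so $H$ is a retract of $G$ and $G\cong\Z_n$ is equivalent to triviality of the kernel $N$. Conjugating the relator by the powers of $t$ presents $N$ on the shifted translates of $y\,t^{\pm5}$ as a bicyclically presented group, and a direct calculation with these shifted relations shows that $N$ collapses to the trivial group precisely when $k\equiv0$ or $m\pmod n$. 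The residues $k\equiv0,m$ are therefore exactly the surviving cyclic cases, which is the content of $(2)\Leftrightarrow(3)$; this is the same arithmetic that the later resultant computations reflect.

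It remains to match asphericity with condition (2). When the relator cyclically shortens --- that is, when $k=0$ or $k=m$ --- the relation reads $y^{m}=w$ for some $w\in H$; if $m=\pm1$ this defines $y$ inside $H$, the cellular model collapses onto $X$ rel $X$, and the presentation is aspherical, whereas if $m\ne\pm1$ the element $y$ has finite order and is not conjugate into $H$, so the presentation is \emph{not} aspherical by~\cite[(0.4)]{BogleyPride}. For the remaining values of $k$ the relator has genuine free-product length four, and here the work splits: for the aspherical values permitted by (2) I would verify a weight test on the star graph of the relative presentation, while for every pair outside (2) I must exhibit an \emph{essential} spherical picture certifying $\pi_2(M,X)\ne0$. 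This last construction is where I expect the main obstacle to lie. Exploiting the transitional curvature identity $1/\mathrm{o}(g)+1/\mathrm{o}(h)+1/\mathrm{o}(gh^{-1})=1$, I would assemble pictures on the combinatorial skeleton of a Platonic solid, as in Figure~\ref{fig:sphere} for $J_6(3,1)$, the abundance of coefficient relations available in $\Z_n$ (with $n\in\{4,6\}$) being exactly what makes these diagrams close up. The two genuine difficulties are organising the construction uniformly across the infinitely many residues of $(m,k)$ lying outside (2), and \emph{proving the resulting pictures essential} --- that they represent nonzero elements of the relation module over $\Z G$ --- rather than merely spherical; the latter I would settle through the coefficient relations recorded in~\cite[(1.5)]{BogleyPride}.
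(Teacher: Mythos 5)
Your reduction to the standard form $G=J_n(m,k)$ and your treatment of $(2)\Leftrightarrow(3)$ are essentially sound (the retraction $y\mapsto t^{\mp 5}$ for $m=\pm1$ is correct, and the remaining verification is a finite check over the residues $k=0,\dots,n-1$, which the paper carries out by coset enumeration, finding $|J_n(\pm1,k)|\in\{n,20,42,78\}$). The genuine gap is in the direction $\neg(2)\Rightarrow\neg(1)$. You propose to certify non-asphericity for every $(m,k)$ outside condition (2) by constructing essential spherical pictures on Platonic skeletons, and you yourself identify organising this uniformly over infinitely many parameters, and proving essentiality, as the main obstacle. That obstacle is real, and the construction is never carried out, so the hardest implication of the theorem is missing. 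The paper avoids pictures entirely: the conjugacy relations $t^{-1}y^mt=y^{-k}t^{-1}y^k$ (for $n=4$) and $y^mty^{-m}=t^{-1}y^mt$ (for $n=6$) force $y^{nm}=1$, and the semidirect splitting $J\cong J'\rtimes\pres{y}{y^{nm}}$ shows $y$ has order \emph{exactly} $nm$. Since asphericity implies every finite subgroup of $G\cong H\ast_{\Z_n}J$ is conjugate into $H$~\cite[(0.4)]{BogleyPride}, this immediately forces $nm\le n$, i.e.\ $m=\pm1$; then, $J$ being finite in the remaining cases, the same criterion forces $|J|=n$, i.e.\ $k\equiv 0$ or $m$ bmod $n$. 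You invoke the criterion of~\cite[(0.4)]{BogleyPride} only in the degenerate case where $k=0$ or $k=m$ as integers, missing that the torsion element $y$ does the whole job.

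Two smaller points. First, your "direct calculation" that the kernel $N$ collapses precisely when $k\equiv 0$ or $m$ bmod $n$ is asserted rather than performed; the nontrivial half (that $N\neq 1$ for the other residues) is exactly the finite computation above and must be done. Second, for the aspherical cases inside condition (2) where the relator still has free-product length four (e.g.\ $m=1$, $k\equiv 0$ bmod $n$ with $k\neq 0$), you propose an unverified weight test; the paper instead observes that, in the presence of $t^n=1$, the attaching map is freely homotopic to one modeled on $yt^{\pm1}$, so the inclusion is an elementary expansion up to homotopy and asphericity follows from Whitehead's theorem for the amalgamated union. You should also be careful to distinguish $k=0,m$ as integers from $k\equiv 0,m$ bmod $n$: only the former shortens the relator.
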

Theorem~\ref{mainthm:asphericity} implies that except for the special cases detailed in condition A.2, the (ordinary) presentations $J_4(m,k)$ and $J_6(m,k)$ support non-trivial identities amongst the relations, and thus nontrivial spherical pictures. With very few exceptions, e.g. for $J_6(2,-1)$ [1, page 71], these pictures are so far unseen. We give a (relative) spherical picture for $J_6(3,1)$ in Figure~\ref{fig:sphere}. Here the corner labels $2,3,\bar{2},\bar{3}$ correspond to $t^2,t^3,t^{-2},t^{-3}$, respectively, and an empty corner label corresponds to $t^0=1$. Each edge in the picture is implicitly labelled by $y$ and oriented transversely so that reading clockwise around any vertex gives (a cyclic permutation of) the relator $y^2t^3yt^2$ or its inverse. See any of~\cite{BaikBogleyPride,BogleyPride,Edjvet94,HowieMetaftsis} for details about relative spherical pictures and other examples.

Partial results in the direction of Theorem~\ref{mainthm:asphericity} were initially obtained using coset enumerations, which produced the following rather puzzling group order calculations. As we shall see, these orders grow exponentially with $m$ and, further, that the presentations are \em short \em in the sense of~\cite{BGKLP97}.

\begin{multicols}{2}
\begin{itemize}
\item[ ] $|J_4(4,1)| = 272$~\cite[Page 291]{BaikBogleyPride}
\item[ ] $|J_4(5,1)| = 820$~\cite[Page 186]{HowieMetaftsis}
\item[ ] $|J_4(5,2)| = 500$~\cite[Page 184]{HowieMetaftsis}
\item[ ] $|J_6(3, 1)| = 342$~\cite[Page 25]{BogleyPride}
\item[ ] $|J_6(4,1)| = 4632$~\cite[Page 292]{BaikBogleyPride}
\item[ ] $|J_6(5,1)| = 38010$~\cite[Page 187]{HowieMetaftsis}
\item[ ] $|J_6(5,2)| = 12090$~\cite[Page 184]{HowieMetaftsis}
\end{itemize}
\end{multicols}

The next result provides that if $m \neq 0$ and $n = 4$ or $6$, then the groups $J_n(m, k)$ are either finite cyclic, finite metacyclic, finite metabelian (and not metacyclic), or else virtually free and non-elementary. These groups therefore satisfy a strong form of the Tits alternative. For a group $G$, $G'$ denotes the derived subgroup and $G'' = (G')'$ denotes the second derived subgroup. The group $G$ is metabelian if $G'' = 1$ and it is metacyclic if it has a cyclic normal subgroup $C$ such that $G/C$ is cyclic.

\begin{maintheorem}\label{mainthm:structure} Given $n = 4$ or $6$ and $m \neq 0$, let $J = J_n(m,k)$.
\begin{enumerate}
\item[(a)] The metabelian quotient $J/J''$ is finite.
\item[(b)] The derived group $J'$ is trivial if and only if one of $k$, $m-k$, or $m-2k$ is congruent to zero modulo $nm$, in which case $J \cong \Z_{nm}$.
\item[(c)] If $m$ and $k$ are relatively prime, then $J$ is metabelian, and hence finite.
In this case if $m-2k\not \equiv 0$~mod~$n$ then $J'$ is cyclic and $J$ is metacyclic, and if $m-2k\equiv 0$~mod~$n$ then $J'$ is the direct product of two cyclic groups of equal order and $J$ is not metacyclic.
\item[(d)] If $m$ and $k$ are not relatively prime and $J'$ is non-trivial, then $J{''}$ is a finitely generated non-abelian free group, so $J$ is virtually free and non-elementary.
\end{enumerate}
\end{maintheorem}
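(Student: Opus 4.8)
The plan is to read off the lower derived quotients of $J$ from its abelianization, convert the metabelian quotient into a module computation over the integral group ring of $J/J'$, and then split the finite (metabelian) behaviour from the virtually free behaviour according to $\mathrm{gcd}(m,k)$. First I would compute $J/J'$. Abelianizing the relators $t^n$ and $y^{m-k}t^3y^kt^2$ gives $nt=0$ and $my+5t=0$, whose relation matrix has determinant $\pm nm$ and Smith normal form $\mathrm{diag}(1,nm)$; hence $J/J'\cong\Z_{nm}$ is finite cyclic, generated by the image $a$ of $y$, with $t\mapsto a^{-m}$ when $n=4$ and $t\mapsto a^{m}$ when $n=6$. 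Writing $Q=\Z_{nm}$ and $\Z Q=\Z[Q]$, I would identify $A:=J'/J''$ with $H_1$ of the universal abelian cover of the presentation $2$-complex, i.e. with $\ker\partial_1/\mathrm{im}\,\partial_2$ for $\Z Q^2\xrightarrow{\partial_2}\Z Q^2\xrightarrow{\partial_1}\Z Q$, where $\partial_1$ sends $(e_t,e_y)$ to $(t-1,y-1)$ and $\partial_2$ is the Fox--Jacobian (Alexander) matrix of $(t^n,\,y^{m-k}t^3y^kt^2)$. Since $Q$ is cyclic its augmentation ideal is principal, so $\ker\partial_1$ is computed explicitly and the presentation of $A$ collapses to that of a cyclic $\Z Q$-module $\Z Q/(\Delta)$ for an Alexander element $\Delta$ read off from the Fox derivatives. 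Note already that (a) reduces to showing $A$ finite, since then $J''$ has finite index in $J$.

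For (a) I would tensor with $\Q$. As $Q$ is finite abelian, $\Q Q\cong\prod_{d\mid nm}\Q(\zeta_d)$ is semisimple and $A\otimes\Q$ decomposes accordingly; because $\Z Q$ has finite $\Z$-rank, $A$ is a finitely generated abelian group, so its finiteness is equivalent to $A\otimes\Q=0$, i.e. to the Alexander matrix being nonzero in every nontrivial character $a=\zeta_d$ ($d\mid nm$, $d>1$). Evaluating the first row $(1+t+\cdots+t^{n-1},0)$ and the second row $(\overline{\partial R/\partial t},\overline{\partial R/\partial y})$ at each $\zeta_d$ turns this into the nonvanishing of a cyclotomic resultant, which is precisely the bicyclic-resultant quantity that computes $|A|$; since that quantity is a nonzero integer (this is where factors such as Mersenne and Gaussian--Mersenne numbers appear), $A$ is finite and $J''$ is of finite index. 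Statement (b) I would obtain only after (c) and (d): the same resultant equals $\pm1$ exactly when $\Delta$ is a unit modulo $a^{nm}-1$, and factoring $\Delta$ shows this forces one of $a^{k}-1$, $a^{m-k}-1$, $a^{m-2k}-1$ to be trivial, i.e. $nm\mid k$, $nm\mid m-k$, or $nm\mid m-2k$; in each case the relator degenerates so that $y^m$ becomes a power of $t$ coprime to $n$, forcing $t\in\langle y\rangle$ and hence $J$ abelian. Finally $A=0\iff J'=J''$, and by (c)/(d) this means either $J$ metabelian with $J''=1$, or $J''$ free and perfect hence trivial; either way $J'=1$ and $J\cong J/J'\cong\Z_{nm}$.

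The divisibility dichotomy enters through the index-$m$ subgroup $M=\ker(J\to\Z_m)$ given by $y\mapsto1$, $t\mapsto0$. A Reidemeister--Schreier rewriting with transversal $\{1,y,\dots,y^{m-1}\}$ and generators $\tau_i=y^ity^{-i}$, $s=y^m$ produces the relations $\tau_i^n=1$ and $\tau_{i-k}^{3}\,s\,\tau_i^{2}=1$ (indices modulo $m$, the wrap-around twisted by conjugation by $s$). Eliminating $s$ links $\tau_i$ to $\tau_{i-k}$, so the generators fall into $\mathrm{gcd}(m,k)$ orbits under the shift by $k$. For (c), when $\mathrm{gcd}(m,k)=1$ there is a single orbit: all the $\tau_i$ are tied together, and one checks that the relations then force $J''=1$, so $J=J/J''$ is metabelian and finite. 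The precise structure of $A=J'$ is governed by whether the relevant quadratic factor of $\Delta$ modulo the pertinent prime is separable or a square, which a direct computation matches to $m-2k\not\equiv0$ versus $m-2k\equiv0\pmod n$; in the former case $A$ is cyclic and $J$ is metacyclic, and in the latter $A\cong\Z_\ell\oplus\Z_\ell$ and $J$ is not metacyclic.

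The \emph{main obstacle} is (d). When $\mathrm{gcd}(m,k)=d>1$ the generators $\tau_i$ split into $d$ families joined only through the single stable letter $s$, and I would exploit exactly this to present $J$ (equivalently $M$) as the fundamental group of a finite graph of finite groups, the vertex groups being the cyclic groups $\langle\tau_i\rangle$ of order dividing $n$ and the edge data carrying the $s$-twist; Bass--Serre theory then gives an action of $J$ on a tree with finite stabilizers. Granting this, any torsion-free subgroup acts freely and is therefore free, so it remains to prove that $J''$ is torsion-free, all torsion being conjugate into a vertex group, which injects into the finite group $J/J''$, and that the action is non-elementary, so that $J''$, finitely generated and of finite index by (a), is non-abelian free of finite rank; virtual freeness and non-elementarity of $J$ follow at once. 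The two points I expect to require the most care, and which genuinely separate this case from (c), are the explicit construction of the splitting when the $\tau_i$ are not all linked and the verification that $J''$ meets no conjugate of a vertex group.
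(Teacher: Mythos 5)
Your treatment of (a) and (b) is essentially sound and is the paper's own argument in different clothing: the Fox--Jacobian of the presentation over $\Z[\Z_{nm}]$, evaluated at $nm$-th roots of unity, is exactly the representer-polynomial/resultant computation the paper performs on the Reidemeister--Schreier (bicyclic) presentation of $J'$, and the nonvanishing at each root of unity gives finiteness of $J'/J''$. The problems are in (c) and (d).

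For (c), the sentence ``when $\gcd(m,k)=1$ there is a single orbit: all the $\tau_i$ are tied together, and one checks that the relations then force $J''=1$'' is not a proof sketch but a restatement of the claim. A single orbit under the shift by $k$ shows only that your index-$m$ subgroup $M$ is generated by $\tau_0$ and $s$; two-generation of $M$ (or of $J'$) says nothing about $J'$ being abelian. This is the technical heart of the theorem: the paper derives, by a long chain of consequences of the bicyclic relations, the commutation relations $[x_i,x_{i+m}]=1$ together with power relations such as $x_i^2=x_{i+2k-m}$ (for $n=4$) and $x_i^3=x_{i+m-2k}^4$ (for $n=6$), and only then invokes the cyclic structure of $G_n(x_0^rx_1^{-s})$ to conclude that $J'$ is abelian and, when $m-2k\not\equiv 0 \bmod n$, cyclic. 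None of that is produced by your orbit count, and your appeal to ``separability of a quadratic factor of $\Delta$'' computes the module $J'/J''$, which only determines $J'$ \emph{after} abelianness is known. The cyclic-versus-$\Z_\ell\oplus\Z_\ell$ dichotomy also requires the separate arguments of the paper's Section 6 (explicit power relations such as $x_i=x_{i-8}^2$ after an isomorphism normalising $k$), not just a resultant.

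For (d), the decomposition you need is not the graph-of-finite-groups splitting of $M$ that you propose (and admit you cannot yet construct --- the relators $\tau_{i-k}^3 s\,\tau_i^2$ are length-three in the vertex generators and do not present an amalgam or HNN extension in any evident way). What actually makes (d) work is that the bicyclic presentation of $J'$ itself splits into $d=\gcd(m,k)$ disjoint subpresentations, so that $J'$ is the \emph{free product} of $d$ copies of the finite abelian group $A=J_n(m/d,k/d)'$. From that, $J''=(A^{\ast d})'$ meets every conjugate of every factor trivially (each factor injects into the abelianization $A^d$), hence is free by the Kurosh subgroup theorem, with rank computable by an Euler characteristic count; non-abelianness then reduces to checking $|A|\neq 2$, which follows because $J_n(m/d,k/d)$ is non-abelian. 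Your route leaves both the existence of the splitting and the torsion-freeness/rank of $J''$ as acknowledged open points, so as it stands (d) is not proved.
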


All of the groups $J=J_n(m,k)$ have cyclic abelianizations, so those that are non-cyclic are also non-nilpotent. Theorem~\ref{mainthm:structure}(c) validates our general approach, providing an infinite family of two-generator two-relator finite groups, which therefore have trivial multiplicator and are efficient. As a further application, when $n = 4$ or $6$ and $m \equiv \pm 1 \mod n$, the group $J_n(m,k)$ contains a cyclically presented group as a normal subgroup of index $n$. In this way we obtain new infinite families of finite \textit{generalized Fibonacci groups}, all of which turn out to be metacyclic. See Corollary~\ref{maincor:Fibonacci} in Section~\ref{sec:Fibonacci}.

The theory of cyclically presented groups plays a central role in our arguments.  Letting $a_i$ be the exponent sum of $x_i$ in $w$, the \textit{representer polynomial} of $w$ is $f_w(x) = \sum_{i=0}^{n-1} a_i x^i$. As described in~\cite[pages~77--82]{Johnson76}, a fundamental tool in studying cyclically presented groups is the fact that the relation matrix of the presentation $G_n(w)$ is the circulant matrix $C$ whose first row is $(a_0\; a_1\; \ldots \; a_{n-1})$ and so the abelianization $G_n(w)^\mathrm{ab}$ is finite if and only if $C$ is non-singular, in which case its order is $|\mathrm{det}(C)|$. Further, we have that $\mathrm{det} (C)= \prod_{\lambda^n=1} f_w(\lambda)$ (see, for example, \cite[page~75]{Davis}) so, in particular, $G_n(w)^\mathrm{ab}$ is finite if and only if $f_w(\lambda) \neq 0$ for each complex $n$th root of unity~$\lambda$. If $f,g\in \Z [x]$ are polynomials and $g$ is monic, then the resultant $\mathrm{Res}(f,g)$ satisfies
\begin{equation}\label{eq:resultant}
|\mathrm{Res}(f(x), g(x))| = \prod_{g(\lambda) = 0} |f(\lambda)|
\end{equation}
(see~\cite[Section~7.4]{Cohn} or~\cite[page~75]{Davis}) so when $|G_n(w)^\mathrm{ab}|<\infty$ we have
\begin{equation}\label{eq:representer}
|G_n(w)^\mathrm{ab}| = |\mathrm{Res}(f_w(x), x^n-1)| = \prod_{\lambda^n = 1} |f_w(\lambda)|.
\end{equation}
We generalize these techniques to a class of \textit{bicyclically presented groups} (see Section~\ref{sec:orderabelianization}) to calculate the order of the second derived quotients $J'/J''$ (and hence the order of $J$) in terms of polynomial resultants.  Note that by replacing $y$ by $y^{-1}$, we have $J_n(m,k) \cong J_n(-m,-k)$, so without loss of generality we can assume that $m \geq 0$. We will also show, in Lemmas~\ref{lem:conjugacyrelation4} and~\ref{lem:conjugacyrelation6}, that the parameter $k$ can be taken modulo $nm$. By Theorem~\ref{mainthm:structure} if $n=4$ or $6$ the derived subgroup $J'/J''$ is finite so we define
\[a_n(m,k) = |J_n(m,k)'/J_n(m,k)''|\]
and thus $|J_n(m,k)| = nma_n(m,k)$. It will follow from Corollary~\ref{cor:freeproduct} that if $d = \gcd(m,k) \geq 1$ then $a_n(m,k) = da_n\left({m}/{d},{k}/{d}\right)$.

\begin{maintheorem}\label{mainthm:orders}
Suppose that $m \geq 1$ and $n = 4$ or $6$. If $\gcd(m,k) = 1$, then
\begin{alignat}{1}
a_4(m,k) &= |\mathrm{Res}(1+x^m-x^k,1+x^{2m})|\\
\nonumber &= \prod_{\lambda^{2m} = -1} |1 + \lambda^m - \lambda^k|\\
\nonumber &= 2^m + 1 - 2(\sqrt{2})^m\cos((2k-m)\pi/4)
\intertext{and}
a_6(m,k) &= |\mathrm{Res}(1+x^m+x^{2m}-x^k-x^{m+k},1+x^{3m})|\\
\nonumber &= \prod_{\lambda^{3m} = -1} |1 + \lambda^m +\lambda^{2m} - \lambda^{k}(1 + \lambda^m)|\\
\nonumber &= 3^m+4^m-2(2\sqrt{3})^m\cos((2k-m)\pi/6).
\end{alignat}
\end{maintheorem}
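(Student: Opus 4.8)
The plan is to prove each of the two displays by establishing three equalities in turn and reducing the whole statement to an explicit product over roots of unity. First note that by Theorem~\ref{mainthm:structure}(c) the hypothesis $\gcd(m,k)=1$ makes $J=J_n(m,k)$ metabelian, so $J''=1$ and $a_n(m,k)=|J'/J''|=|J'|<\infty$. The first equality in each display, expressing $a_n(m,k)$ as a resultant against $1+x^{(n/2)m}$, is where the machinery of Section~\ref{sec:orderabelianization} is brought to bear: I would present $J'/J''$ as a module over $\Z[J^\mathrm{ab}]=\Z[x]/(x^{nm}-1)$ using the Fox derivatives of the relators $t^n$ and $y^{m-k}t^3y^kt^2$, observe that $\bar y$ generates $J^\mathrm{ab}\cong\Z_{nm}$, and show that the conjugation action of $\bar t$ (of order $n$) concentrates the module on the factor $x^{(n/2)m}+1$ of $x^{nm}-1=(x^{(n/2)m}-1)(x^{(n/2)m}+1)$. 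The defining data of the resulting bicyclic module is recorded by $1+x^m-x^k$ when $n=4$ and by $1+x^m+x^{2m}-x^k-x^{m+k}$ when $n=6$, and the generalized representer/resultant formula of that section then yields the first equality in exactly the way \eqref{eq:representer} does in the cyclic case.

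The second equality is immediate. Since $1+x^{(n/2)m}$ is monic with zero set $\{\lambda:\lambda^{(n/2)m}=-1\}$, formula \eqref{eq:resultant} gives $|\mathrm{Res}(f,1+x^{(n/2)m})|=\prod_{\lambda^{(n/2)m}=-1}|f(\lambda)|$. For $n=4$ this is already the stated product, and for $n=6$ one only factors $\lambda^k+\lambda^{m+k}=\lambda^k(1+\lambda^m)$ to reach the displayed form.

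The substantive step is the third equality, and here the plan is uniform in $n$. Writing $\rho=\lambda^m$, the constraint $\lambda^{(n/2)m}=-1$ says precisely that $\rho$ ranges over the roots of $x^{n/2}+1$, and the $(n/2)m$ values of $\lambda$ split into $n/2$ blocks of size $m$ indexed by $\rho$. On the block $\{\lambda:\lambda^m=\rho\}$ the integrand is $A_\rho-B_\rho\lambda^k$, where $A_\rho=1+\rho,\ B_\rho=1$ for $n=4$ and $A_\rho=1+\rho+\rho^2,\ B_\rho=1+\rho$ for $n=6$ (so the block $\rho=-1$ contributes the factor $1$). Using $\gcd(m,k)=1$ to reindex $\{\lambda^k:\lambda^m=\rho\}$ as $\rho^{k}\{\mu:\mu^m=1\}$, the identity $\prod_{\mu^m=1}(A-c\mu)=A^m-c^m$ collapses each block product to $A_\rho^{\,m}-B_\rho^{\,m}\rho^{k}$. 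The nontrivial blocks occur in complex-conjugate pairs $\rho,\bar\rho$ (for $n=4$ giving $(1\pm i)^m-(\pm i)^k$, for $n=6$ giving $(1+\rho+\rho^2)^m-(1+\rho)^m\rho^k$ at $\rho=e^{\pm i\pi/3}$), so the total product is $|z|^2$ for a single $z$. Expanding $|z|^2=|A_\rho^m|^2+|B_\rho^m|^2-2\,\mathrm{Re}\!\big(A_\rho^m\overline{B_\rho^m\rho^k}\big)$ and reading the cross term as the cosine of the argument difference produces $2^m+1-2(\sqrt2)^m\cos((2k-m)\pi/4)$ for $n=4$ and $3^m+4^m-2(2\sqrt3)^m\cos((2k-m)\pi/6)$ for $n=6$.

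I expect the main obstacle to lie in the first equality rather than in this final evaluation: correctly building the bicyclic module presentation of $J'/J''$ and verifying that its order is the claimed resultant, in particular justifying why the $\bar t$-action factors through $x^{(n/2)m}+1$ and why the relator $y^{m-k}t^3y^kt^2$ contributes exactly the polynomial $1+x^m-x^k$ (respectively $1+x^m+x^{2m}-x^k-x^{m+k}$). Once that identification is secured, the root-of-unity computation above is clean; the only point at which coprimality is invoked is the block-reindexing that collapses each product, which is also precisely the hypothesis under which Theorem~\ref{mainthm:structure}(c) guarantees that $J$ is metabelian.
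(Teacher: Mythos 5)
Your overall route is the same as the paper's: reduce $a_n(m,k)=|J'/J''|$ to the order of the abelianization of a bicyclically presented group, identify that order with $|\mathrm{Res}(f_w(x),1+x^{nm/2})|$, and then evaluate the resultant as a product over the roots of $x^{nm/2}+1$, using $\gcd(m,k)=1$ exactly where you say (to collapse each block $\{\lambda:\lambda^m=\rho\}$ via $\prod_{\mu^m=1}(A-c\mu)=A^m-c^m$, which is the paper's Proposition~\ref{prop:resultant}). Your third-equality computation is correct and is essentially the paper's computation of $P_1P_{-1}$ in Theorem~\ref{thm:requiredres}; pairing the conjugate blocks and reading off $|z|^2$ is a slightly tidier way to finish than the paper's repeated resultant/trig manipulations. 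One cosmetic slip: $\{\lambda^k:\lambda^m=\rho\}$ is $\lambda_0^k\{\mu:\mu^m=1\}$ for a fixed $m$-th root $\lambda_0$ of $\rho$, not $\rho^k\{\mu:\mu^m=1\}$; the block product still comes out as $A_\rho^m-B_\rho^m\rho^k$ because $(\lambda_0^k)^m=\rho^k$, so nothing downstream is affected.

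The genuine gap is in the first equality, and it is not quite where you locate it. The Reidemeister--Schreier (or Fox derivative) contribution of the relator $t^n$ to the relation module is $\sum_{\alpha=0}^{n-1}x^{\alpha m}=(1+x^{nm/2})\sum_{\alpha=0}^{n/2-1}x^{\alpha m}$, \emph{not} $1+x^{nm/2}$. So it is false that the $\bar t$-action by itself ``concentrates the module on the factor $x^{nm/2}+1$'': taken alone, that relator only imposes the larger factor $(1+x^{nm/2})\sum_{\alpha=0}^{n/2-1}x^{\alpha m}$, and the resultant of $f_w$ against that polynomial is not the claimed order. What must be shown is that, \emph{in combination with} the relators coming from $y^{m-k}t^3y^kt^2$ (representer $g(x)-x^kh(x)$ with $g=\sum_{\alpha=0}^{n/2-1}x^{\alpha m}$, $h=\sum_{\alpha=0}^{n/2-2}x^{\alpha m}$), the ideal generated by the two relator families equals $(f_w(x),\,1+x^{nm/2})$. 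This is the content of the paper's Lemma~\ref{lem:bicyclicutov}, and it requires an explicit manipulation: one combines $R_i$, $S_i$ and $R_{i+nm/2}$ to derive $x_ix_{i+nm/2}=1$, and then checks that the original relator from $t^n$ becomes redundant. Your plan asserts this step rather than proving it, and the heuristic you give for it would, if taken literally, lead to the wrong polynomial. Once that lemma is in place, the identification of the order with the resultant (the paper's Theorem~\ref{thm:newbicyclicabelianization}, via a Sylvester-matrix argument after eliminating half the generators) and your evaluation over roots of unity complete the proof as you describe.
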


It follows from Theorem~\ref{mainthm:orders} that the squares of all Mersenne primes $2^p-1$ occur as particular values of $a_4(m,k)$. Theorem~\ref{thm:(m-2k)=0mod4} further implies that for each Mersenne prime $2^p-1$, the elementary abelian group of order $(2^p-1)^2$ occurs amongst our groups $J_4(m,k)'$. See the discussion following the proof of Theorem~\ref{thm:requiredres} for further details. Just as the Mersenne prime conjecture asserts that there are infinitely many Mersenne primes, it is expected (see~\cite[page~418]{ESTW07}) that there are infinitely many primes of the form $4^p-3^p$. It follows from Theorem~\ref{mainthm:orders} that the squares of each of these primes (with $p\geq 7$) occur as particular values of $a_6(m,k)$. Theorem~\ref{thm:(m-2k)=0mod6} further implies that for each prime of the form $4^p-3^p$, the elementary abelian group of order $(4^p-3^p)^2$ occurs amongst our groups $J_6(m,k)'$.

The groups $F^{a,b,c}$ defined by the presentations $\pres{R,S}{R^2, RS^aRS^bRS^c}$ first appeared in~\cite{CCR77}. Interest in these groups was motivated in part by studies of $0$-symmetric trivalent Cayley graphs. The finite groups of the form $F^{a,b,c}$ were classified in a series of papers culminating in~\cite{HRS06}. For the groups $F^{a,b,c}$, a connection to Mersenne primes and to related families of large primes was described in~\cite{Perkel85}. The coincidences of the orders of $J_4(m,k)$ (as in Theorem~\ref{mainthm:orders}) and $F^{m,k,m-k}$~\cite[Theorem 7.2]{CCR77} suggested that these groups might be isomorphic. Indeed this is the case and so the presentations $J_4(m,k)$ provide new and  (except for the case $|m| \leq 2$) shorter presentations for the groups $F^{m,k,m-k}$. Moreover, the connection to trivalent Cayley graphs extends to the groups $J_6(m,k)$.
\begin{theorem}\label{thm:abc} For all $m$ and $k$, the group $J_4(m,k)$ is isomorphic to $F^{m,k,m-k}$. The group $J_6(m,k)$ has a presentation $\pres{R,S}{R^2, RS^m(RS^kRS^{m-k})^2}$.
\end{theorem}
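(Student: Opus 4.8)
The plan is to establish both isomorphisms by exhibiting explicit Tietze transformations between the given presentations. I start with the group $J_n(m,k) = \pres{t,y}{t^n, y^{m-k}t^3y^kt^2}$ and seek generators $R,S$ that satisfy the target relations. The guiding idea is that the coefficient $t$ (of order $n$) should be absorbed into a single involution: since $n=4$ or $6$ and the relevant coefficients $g=t^3$, $h=t^2$ satisfy $\mathrm{o}(g),\mathrm{o}(h)\in\{2,3,4\}$, the element $t^3$ has order $2$ when $n=4$ (respectively $t^3$ is an involution when $n=6$ as well, since $6/\gcd(6,3)=2$). The natural candidate is therefore to set $R = t^3$ (an involution in both cases) and $S = y$, or some nearby variant, and then rewrite the single long relator $y^{m-k}t^3y^kt^2$ in terms of $R$ and $S$.

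For the $n=4$ case I would first record that $R := t^3 = t^{-1}$ has order $2$, so $t = R^{-1} = R$ and every power of $t$ is a power of $R$; in particular $t^2 = R^2 = 1$. This collapses the relator dramatically: with $S := y$ the relator $y^{m-k}t^3y^kt^2$ becomes $S^{m-k}RS^k$, which is not yet of the advertised form, so the genuine content is to choose the substitution so that the three ``blocks'' $S^m, S^k, S^{m-k}$ all appear separated by copies of $R$. The cleanest route is to conjugate and re-express: I expect that setting $R=t^2$ (the other involution when $n=4$) and absorbing $t^3 = t^2\cdot t = R\,t$ forces a relation $RS^aRS^bRS^c$ with $(a,b,c)=(m,k,m-k)$ after cyclic permutation and inversion. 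I would verify that the two relations $t^4=1$ and the length-four relator are together equivalent, under these substitutions, to $R^2=1$ and $RS^mRS^kRS^{m-k}=1$, and check that the map is invertible (recovering $t,y$ from $R,S$) so that it is a genuine isomorphism and not merely a surjection. The exponents $m,\,k,\,m-k$ sum to $2m$, matching the exponent sum of $t$ in the relator, which is the consistency check that tells me the bookkeeping is right.

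For the $n=6$ case the same substitution $R=t^3$ (order $2$) and $S=y$ applies, but now $t^2$ has order $3$ rather than being trivial, so it cannot be eliminated and instead reappears as a repeated block. Here I anticipate that rewriting $t^2 = (t^3)(t^{-1}) = R t^{-1}$ and iterating the relator—using $t^6=1$ to fold the powers of $t$ into copies of $R$—produces exactly the doubled word $RS^m(RS^kRS^{m-k})^2$, the squaring reflecting that three copies of the order-$3$ element $t^2$ must be accounted for. I would present this as a direct Tietze computation, tracking each occurrence of $t^j$ and replacing it by the appropriate $R$-word.

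The main obstacle will be the bookkeeping in the $n=6$ case: unlike $n=4$, where $t^2=1$ kills most of the relator, here one must carefully commute the involution $R$ past the powers of $S$ while repeatedly invoking $t^6=1$, and the squared block only emerges after correctly normalizing the cyclic form of the word. I would guard against sign and exponent errors by confirming the exponent-sum identity (the total $S$-exponent in $RS^m(RS^kRS^{m-k})^2$ is $m+2k+2(m-k)=3m$, matching the exponent sum of $t$ in $J_6$'s relator) and by checking the isomorphism on abelianizations, which by Theorem~\ref{mainthm:structure} and the resultant formulas of Theorem~\ref{mainthm:orders} must agree. Establishing that the rewriting is reversible—so that $R,S$ generate and the relations are equivalent rather than merely implied—completes the argument.
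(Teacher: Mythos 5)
Your choice of generators is the same as the paper's --- take $R$ to be the involution $t^{n/2}$ and $S=y^{\pm 1}$, and run a Tietze transformation --- so the plan is sound in outline. But two things keep this from being a proof. First, the opening analysis of the $n=4$ case is simply wrong: in $J_4(m,k)$ the element $t^3=t^{-1}$ has order dividing $4$, not $2$, so the chain ``$t=R^{-1}=R$, hence $t^2=R^2=1$'' is false (it would force $t^2=1$, which does not hold). You do abandon this and switch to $R=t^2$, but the false start signals that the actual mechanism is not yet in hand. Second, and more seriously, you never identify the step that eliminates $t$. After setting $R=t^{n/2}$, the length-four relator still contains one stray letter $t$ (from $t^3=Rt$ when $n=4$, or $t^2=Rt^{-1}$ when $n=6$), and $t$ is \emph{not} a power of $R$, so ``folding the powers of $t$ into copies of $R$ using $t^n=1$'' is not what happens and cannot produce a word in $R$ and $S$ alone.

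The engine of the argument is that the length-four relator can be \emph{solved} for $t$: with $S=y^{-1}$ (for $n=4$) or $S=y$ (for $n=6$) one finds $t=S^{m-k}RS^kR$ in both cases. Eliminating $t$ then converts the one remaining defining relation $R=t^{n/2}$ into $\bigl(S^{m-k}RS^kR\bigr)^{n/2}=R$; cancelling the terminal $R$ against the right-hand side and cyclically permuting gives exactly $RS^{m}RS^{k}RS^{m-k}$ when $n=4$ and $RS^{m}\bigl(RS^{k}RS^{m-k}\bigr)^{2}$ when $n=6$. This is where the ``squared block'' comes from --- it is $t^{n/2}$ written out in the letters $R,S$, not an accounting of ``three copies of the order-$3$ element $t^2$''. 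Reversibility is automatic in this framework (each step is a Tietze move: introduce $R,S$ with defining equations, eliminate $y$ and then $t$), so it needs no separate verification. Until you write down the expression for $t$ and perform this substitution, the assertion that the rewriting ``produces exactly the doubled word'' is an expectation supported only by exponent-sum bookkeeping, not a proof.
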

The fact that finite generalized Fibonacci groups arise as subgroups of certain groups $J_4(m,k)$ reveals a connection between the groups $F^{a,b,c}$ and generalized Fibonacci groups that has not previously been noted.

There are many isomorphisms amongst the finite groups $J_n(m,k)$, so we carefully enumerate the coincidences. The following result implies that when $(m,k) = 1$, each group $J_4(m,k)$ is isomorphic to $J_4(m,1)$ or $J_4(m,-1)$. Still with $(m,k) = 1$, each group $J_6(m,k)$ is isomorphic to one of $J_6(m,1)$, $J_6(m,-1)$, or $J_6(m,3)$.

\begin{maintheorem}\label{mainthm:Isos} For $n = 4$ or $6$, let $m_1, m_2 \geq 1$, and let $k_1, k_2$ be given so that each $\gcd(m_i,k_i) = 1$. Then $J_n(m_1,k_1) \cong J_n(m_2,k_2)$ if and only if $m_1 = m_2$ and either $k_1 \equiv k_2$  or $k_1+k_2 \equiv m_1$~mod~$n$.
\end{maintheorem}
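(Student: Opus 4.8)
I would prove the two implications by different means: the \emph{only if} direction from isomorphism invariants already computed, and the \emph{if} direction from explicit constructions governed by the metacyclic structure. For \emph{only if}, first abelianize: the relator $y^{m-k}t^3y^kt^2$ abelianizes to $y^mt^5$, so together with $t^n$ one finds $J_n(m,k)^{\mathrm{ab}}\cong\Z_{nm}$, generated by the image of $y$ (with $t$ a power of $y$). Since $nm$ is an isomorphism invariant and $n$ is fixed, any isomorphism $J_n(m_1,k_1)\cong J_n(m_2,k_2)$ forces $m_1=m_2=:m$. Comparing orders via $|J_n(m,k)|=nm\,a_n(m,k)$ then gives $a_n(m,k_1)=a_n(m,k_2)$; feeding this into the closed forms of Theorem~\ref{mainthm:orders} (available because each pair is coprime) and cancelling the common factors leaves $\cos((2k_1-m)\pi/n)=\cos((2k_2-m)\pi/n)$, hence $2k_1-m\equiv\pm(2k_2-m)\pmod{2n}$. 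The two signs give precisely $k_1\equiv k_2$ and $k_1+k_2\equiv m$ modulo $n$, which is the asserted necessary condition.

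For \emph{if}, I would reduce both permitted cases to a single statement. A reflection isomorphism $J_n(m,k)\cong J_n(m,m-k)$ is available from a Tietze argument: apply the automorphism $t\mapsto t^{-1}$ of $\gpres{t}\cong\Z_n$, invert and cyclically permute the resulting relator, and combine with $J_n(m,k)\cong J_n(-m,-k)$. Since this turns the hypothesis $k_1+k_2\equiv m\pmod n$ into $m-k_1\equiv k_2\pmod n$, it suffices to prove the key claim that if $m,k$ and $m,k'$ are each coprime and $k\equiv k'\pmod n$, then $J_n(m,k)\cong J_n(m,k')$. Lemmas~\ref{lem:conjugacyrelation4} and~\ref{lem:conjugacyrelation6} already let me replace $k$ by $k+nm$, but for $m>1$ this together with the reflection does not generate the full residue class modulo $n$, so genuinely new isomorphisms are needed here.

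I would establish the key claim from Theorem~\ref{mainthm:structure}(c). With $m,k$ coprime, $J:=J_n(m,k)$ fits in an extension $1\ra J'\ra J\ra\Z_{nm}\ra 1$ with $J'$ abelian, and the image $y$ of a generator of $\Z_{nm}$ acts on $J'$. When $m-2k\not\equiv 0\pmod n$ the subgroup $J'$ is cyclic of order $a_n(m,k)$ and $y$ acts through a unit $u=u(m,k)$ satisfying the representer relation (for $n=4$, of the form $1+u^m=u^k$ with $u^{2m}=-1$) modulo $a_n(m,k)$. By the standard classification of metacyclic groups the isomorphism type depends on the action only through the substitution $u\mapsto u^s$ that accompanies replacing the quotient generator $y$ by $y^s$ for $s$ coprime to $nm$. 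The arithmetic core is then to solve $s\equiv 1\pmod n$ and $sk'\equiv k\pmod{nm}$: coprimality of $m$ with $k,k'$ together with $k\equiv k'\pmod n$ makes this solvable by the Chinese Remainder Theorem, and any such $s$ is automatically coprime to $nm$. Because $y$ has order dividing $nm$, this $s$ carries the defining relation for $u(m,k')$ to that for $u(m,k)$, realizing the isomorphism $J_n(m,k)\cong J_n(m,k')$.

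In the remaining case $m-2k\equiv 0\pmod n$ the group $J'$ is $\Z_c\times\Z_c$ and the same scheme runs with the action unit replaced by a matrix in $\mathrm{GL}_2(\Z_c)$, the relevant equivalence now being conjugacy together with generator replacement. I expect the main obstacle to be exactly this final ingredient: pinning down the action (the unit $u(m,k)$, and especially the matrix in the $\Z_c\times\Z_c$ case) explicitly enough as a function of $k$, and verifying that \emph{all} the numerical invariants in the metacyclic/metabelian classification — not only the conjugation action but the accompanying extension data — coincide exactly when $k\equiv k'\pmod n$, while remaining distinct across the two residue classes that the order computation in the \emph{only if} direction already separates.
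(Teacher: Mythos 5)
Your \emph{only if} direction coincides with the paper's argument: abelianization forces $m_1=m_2$, the order of the (characteristic) derived subgroup is an invariant, and feeding $a_n(m,k_1)=a_n(m,k_2)$ into Theorem~\ref{mainthm:orders} gives $\cos((2k_1-m)\pi/n)=\cos((2k_2-m)\pi/n)$ and hence the two congruence classes. Likewise your reflection $J_n(m,k)\cong J_n(m,m-k)$ is the paper's Lemma~\ref{lem:easyisom}, and it correctly reduces the case $k_1+k_2\equiv m$ to $k_1\equiv k_2\pmod n$. That half is fine.

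The \emph{if} direction is where you diverge from the paper and where the proposal has genuine gaps. The paper proves the required isomorphisms by Tietze transformations on the two-generator presentation: it substitutes $u=y^{p}$ for a suitable $p$ coprime to $nm$ and reduces every coprime pair to a canonical representative $k\equiv\pm1\pmod n$ or, for $n=6$, $k\equiv3\pmod6$ (Lemmas~\ref{lem:isomepsilon} and~\ref{lem:isomk=3}). This is exactly your ``replace the quotient generator by $y^{s}$'' move, but carried out on the presentation itself, so it needs no information about the structure of $J'$ and works uniformly whether $J'$ is cyclic or not. Your structural version breaks down at three points. First, the case $m-2k\equiv0\pmod n$ is simply not done, and you cannot import $J'\cong\Z_c\oplus\Z_c$ from Theorems~\ref{thm:(m-2k)=0mod4} and~\ref{thm:(m-2k)=0mod6} to do it: the paper proves those theorems \emph{using} Theorem~\ref{mainthm:Isos}, so only ``abelian, generated by $x_0$ and $x_m$'' (Theorems~\ref{thm:J4'finite}, \ref{thm:J6'finite}) is legitimately available here. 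Second, in the cyclic case you need the honest action unit of $y^{s}$ on $J_n(m,k')'$ to \emph{equal} the action unit of $y$ on $J_n(m,k)'$; showing that $(u')^{s}$ satisfies the same congruences $1+x^m\equiv x^k$, $x^{2m}\equiv-1\pmod c$ as $u$ does is not the same as showing $(u')^s\equiv u$, unless you also prove that these congruences (together with $c$) determine the pair $(c,u)$ — a verification you omit and which the presentation-level substitution makes unnecessary. Third, the system $s\equiv1\pmod n$, $sk'\equiv k\pmod{nm}$ is not a Chinese Remainder instance, since $n\mid nm$: when $\gcd(k,n)=1$ the second congruence forces the first, but when $\gcd(k,n)>1$ (e.g.\ $n=6$, $k\equiv k'\equiv3\pmod6$) the second congruence only pins $s$ down modulo $nm/\gcd(k,n)$ and the compatibility and the coprimality $\gcd(s,nm)=1$ must be checked; this is precisely the delicate case for which the paper needs the separate Lemma~\ref{lem:isomk=3}.
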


In proving our results for the groups $J_n(m,k)$, we generally handle the cases $n = 4$ and $n=6$ separately. Nevertheless, in every case the overall strategy is identical for both families. It is therefore reasonable to expect that our arguments could be extended to obtain more general results. However, the complexity of the arguments for the case $n=6$ is generally an order of magnitude greater than for $n=4$, so generalizations will not be straightforward or automatic.

We conclude this introduction with an outline of the paper. In Section~\ref{sec:Asp} we prove Theorem~\ref{mainthm:asphericity} by first showing that $y$ is an element of order $nm$ in $J_n(m,k)$ ($n=4$ or $6$). In Section~\ref{sec:derived} we obtain
bicyclic presentations for the derived subgroup $J_n(m,k)'$ and we show that if $(m,k) = 1$ then the derived subgroup is finite abelian and usually cyclic. In Section~\ref{sec:orderabelianization} we generalize~(\ref{eq:representer}) to give a formula for the order of the abelianization of certain bicyclically presented groups in terms of polynomial resultants. With this we obtain the group orders given in Theorem~\ref{mainthm:orders} and describe some prime factors of the numbers $a_n(m,k)$. In Section~\ref{sec:isoms} we prove the isomorphism theorem, Theorem~\ref{mainthm:Isos}, and prove the connection between the groups $J_n(m,k)$ and the groups $F^{a,b,c}$, Theorem~\ref{thm:abc}. Section~\ref{sec:noncyclicJ'}  examines the case where $(m,k) = 1$ and $n$ is a divisor of $m-2k$; here we show that the derived subgroup $J_n(m,k)'$ is the direct product of two subgroups of equal order, thus completing the proof of Theorem~\ref{mainthm:structure}. In Section~\ref{sec:Fibonacci} we apply our results to obtain new infinite families of finite metacyclic generalized Fibonacci groups (Corollary~\ref{maincor:Fibonacci}).


\section{Torsion and asphericity}\label{sec:Asp}

We are interested in the groups $J_n(m,k) = \pres{t,y}{t^n, y^{m-k}t^3y^kt^2}$ when $n = 4$ or $6$. The group $J = J_n(m,k)$ abelianizes to $J/J' \cong \mathbb{Z}_{nm}$, generated by $yJ'$. The following conjugacy relations are fundamental to all that follows, and lead directly to the proof of Theorem~\ref{mainthm:asphericity}.

\begin{lemma}\label{lem:conjugacyrelation4}
The relation $t^{-1}y^mt = y^{-k}t^{-1}y^k$ holds in the group $J_4(m,k)$. In particular $y^{4m} = 1$ in $J_4(m,k)$, which therefore splits as a semidirect product $$J_4(m,k)\cong J_4(m,k)'\rtimes \pres{y}{y^{4m}}.$$
\end{lemma}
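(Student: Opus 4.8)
The plan is to work throughout in $J = J_4(m,k)$ and to extract everything from the single defining relator together with $t^4 = 1$. First I would use $t^4 = 1$ to rewrite $y^{m-k}t^3y^kt^2$: since $t^3 = t^{-1}$ and $t^2 = t^{-2}$, setting the relator equal to $1$ yields the pivotal relation $y^{m-k}t^{-1}y^k = t^2$, which I will call $(\star)$. From $(\star)$ I would record three immediate rearrangements, all valid for arbitrary integers $m,k$: left-multiplying by $y^{k-m}$ gives $t^{-1}y^k = y^{k-m}t^2$; right-multiplying by $y^{-k}$ gives $y^{m-k}t^{-1} = t^2y^{-k}$; and right-multiplying the latter by $t$ gives $y^{m-k} = t^2y^{-k}t$.

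To prove the conjugacy relation $t^{-1}y^mt = y^{-k}t^{-1}y^k$ I would show that both sides reduce to the common value $y^{-m}t^2$. The right-hand side is immediate:
\[ y^{-k}t^{-1}y^k = y^{-k}\,(t^{-1}y^k) = y^{-k}\,(y^{k-m}t^2) = y^{-m}t^2 .\]
For the left-hand side I would split $y^m = y^k\,y^{m-k}$ and substitute the rearrangements of $(\star)$:
\[ t^{-1}y^mt = (t^{-1}y^k)\,y^{m-k}\,t = y^{k-m}t^2\,(t^2y^{-k}t)\,t = y^{k-m}t^4y^{-k}t^2 = y^{-m}t^2 ,\]
where $t^4 = 1$ is used at the final step. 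Comparing the two computations establishes the relation. This elementary but carefully arranged reduction, in which the $t$-powers must be made to collapse via $t^4=1$, is the main obstacle; I expect the analogous computation for $n=6$ to be appreciably more intricate, consistent with the paper's remark that the $n=6$ arguments run an order of magnitude deeper.

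Granting the conjugacy relation, the torsion claim $y^{4m} = 1$ is then the real payoff of having written the relation in the conjugate form rather than as $t^{-1}y^mt = y^{-m}t^2$: the element $y^{-k}t^{-1}y^k$ is a conjugate of $t^{-1}$, so raising the relation to the fourth power and invoking $t^4 = 1$ gives
\[ t^{-1}y^{4m}t = (t^{-1}y^mt)^4 = (y^{-k}t^{-1}y^k)^4 = y^{-k}t^{-4}y^k = 1 ,\]
whence $y^{4m} = 1$. For the splitting, I would combine this with the abelianization $J/J' \cong \Z_{4m}$, in which the image of $y$ is a generator. Thus $\langle y\rangle$ is cyclic of order dividing $4m$, yet the natural map $J \to J/J'$ sends $\langle y\rangle$ onto all of $\Z_{4m}$; this forces $\langle y\rangle \cap J' = 1$ and $|\langle y\rangle| = 4m$. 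Since $J'$ is normal and $J = J'\langle y\rangle$, I conclude $J \cong J' \rtimes \langle y\rangle$ with the complement $\langle y\rangle \cong \pres{y}{y^{4m}}$, as required.
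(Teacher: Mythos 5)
Your proof is correct and follows essentially the same route as the paper: both arguments manipulate the single relator together with $t^4=1$ to obtain the conjugacy relation (the paper's intermediate identity $t^2y^m=y^{-k}ty^k$ is just the inverse of your common value $y^{-k}t^{-1}y^k=y^{-m}t^2$), then raise it to the fourth power to get $y^{4m}=1$ and invoke $J/J'\cong\Z_{4m}$ for the splitting. Your write-up merely spells out the semidirect-product step that the paper compresses into ``whence the result.''
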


\begin{proof} The relation $y^{m-k}t^3y^kt^2 = 1$ can be rewritten as $t^2y^m = y^{-k}t^{-3}y^k$ and as $y^mt^3 = y^kt^{-2}y^{-k}$. Since $t^4 = 1$, this means that $t^2y^m = y^{-k}ty^k$ and $(y^mt^3)^2 = 1$. Thus $1 = (y^mt^3)(y^mt^3) = y^mty^{-k}ty^kt^{-1}$, whence the result.
\end{proof}

\begin{lemma}\label{lem:conjugacyrelation6}
The relation $y^mty^{-m}=t^{-1}y^mt$ holds in the group $J_6(m,k)$. In particular $y^{6m} = 1$ in $J_6(m,k)$, which therefore splits as a semidirect product $$J_6(m,k)\cong J_6(m,k)'\rtimes \pres{y}{y^{6m}}.$$ The group $J_6(m,k)$ admits an expanded presentation
\[J_6(m,k) = \pres{t,y}{t^6, y^{m-k}t^3y^kt^2, y^mty^{-m}t^{-1}y^{-m}t, (t^2y^m)^2, y^{6m}}.\]
\end{lemma}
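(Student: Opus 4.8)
The plan is to follow the template of the $n=4$ argument in Lemma~\ref{lem:conjugacyrelation4}, first extracting convenient rewrites of the single relator and then exploiting $t^6=1$; the essential new difficulty is that the target relation for $n=6$ is free of the parameter $k$, so the heart of the proof is an elimination of $k$. As a warm-up I would record, exactly as for $n=4$, that a cyclic rotation of the relator together with $t^{-3}=t^3$ gives $t^2y^m = y^{-k}t^3y^k$, and squaring this immediately yields $(t^2y^m)^2 = y^{-k}t^6y^k = 1$, i.e.\ the relation $y^mt^2y^m = t^{-2}$. This disposes of the relator $(t^2y^m)^2$ in the expanded presentation but, as one checks, everything derivable from $(t^2y^m)^2=1$ and $t^6=1$ alone is too weak to give the conjugacy relation, so the full force of the relator must be used.

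The crux is the relation $y^mty^{-m}=t^{-1}y^mt$, which I would first rewrite in the equivalent symmetric ``braid'' form $ty^mt = y^mty^m$. To reach it I would exploit that $y^kty^{-k}$ is a conjugate of $t$ and hence has order dividing $6$: expanding $(y^kty^{-k})^6 = y^kt^6y^{-k}=1$ via its square, and using the rotation $y^kt^2y^{-k} = t^3y^{-m}$ coming directly from the relator, collapses this to the $k$-free identity $(t^3y^{-m})^3 = 1$. Rewriting the latter as $t^3y^{-m}t^3 = y^mt^3y^m$ and then repeatedly trading $t^3$ for $t\cdot t^2$ and substituting $y^mt^2y^m=t^{-2}$ (in its forms $t^2y^m = y^{-m}t^4$ and $y^mt^2 = t^4y^{-m}$) reduces both sides to single powers of $t$, leaving $tut^{-1} = u^{-1}tu$ with $u = y^m$, which rearranges to the braid relation and hence to the stated conjugacy relation. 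I expect this collapse to be the main obstacle: it is elementary but requires disciplined bookkeeping, since one must convert between $t^2$, $t^3$ and their inverses using $t^6=1$ at each step without accidentally reducing back to the weaker relation $(t^2y^m)^2=1$.

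With the conjugacy relation in hand the remaining assertions are routine. Writing $u=y^m$, the relation gives $t^{-1}ut = utu^{-1}$, and a one-line induction shows $t^{-1}y^{jm}t = y^mt^jy^{-m}$ for all $j\geq 1$; taking $j=6$ and using $t^6=1$ yields $t^{-1}y^{6m}t = y^mt^6y^{-m}=1$, so $y^{6m}=1$. Since $J=J_6(m,k)$ abelianizes to $\Z_{6m}$ generated by the image of $y$, the element $y$ then has order exactly $6m$, so $\gpres{y}\cong\Z_{6m}$ meets $J'$ trivially and together with $J'$ generates $J$; this is precisely the claimed splitting $J\cong J'\rtimes\pres{y}{y^{6m}}$. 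Finally, for the expanded presentation I would observe that each of the three additional relators $y^mty^{-m}t^{-1}y^{-m}t$, $(t^2y^m)^2$, and $y^{6m}$ has just been shown to be a consequence of $\{t^6,\,y^{m-k}t^3y^kt^2\}$, so adjoining them is a sequence of Tietze transformations that does not change the group.
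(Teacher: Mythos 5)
Your proposal is correct and follows essentially the same route as the paper's proof: both arguments rest on the two consequences $(t^2y^m)^2=1$ and $(y^mt^3)^3=1$ (your $(t^3y^{-m})^3=1$ is the same identity), obtained because these words are conjugate to powers of $t$, and then expand the cube using the square relation $y^mt^2y^m=t^{-2}$ to eliminate $k$ and reach $y^mty^{-m}=t^{-1}y^mt$. Your detour through $(y^kty^{-k})^6$ and the braid-relation formulation are cosmetic variations, and the collapse of $t^3y^{-m}t^3=y^mt^3y^m$ to the stated conjugacy relation that you describe does go through.
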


\begin{proof} The relation $y^{m-k}t^3y^kt^2 = 1$ can be rewritten as $t^2y^m = y^{-k}t^{-3}y^k$ and as $y^mt^3 = y^kt^{-2}y^{-k}$. Since $t^6 = 1$, this means that $(t^2y^m)^2 = 1$ and $(y^mt^3)^3 = 1$. Thus $t^2y^mt^2 = y^{-m}$ and $1 = (y^mt^3)(y^mt^3)(y^mt^3) = y^mty^{-m}ty^mt^3 = y^mty^{-m}t^{-1}y^{-m}t$, whence the result. \end{proof}

Note in particular that when $m \neq 0$, the parameter $k$ can be interpreted modulo $|nm|$. The next result indicates when the groups $J_n(m,k)$ are abelian. The converse statement will be proved in Section~\ref{sec:orderabelianization}, giving Theorem~\ref{mainthm:structure}(b).

\begin{lemma}\label{lem:abelianJ(m,k)}Let $n = 4$ or $6$.
If $m=0$, or $m\equiv k$~mod~$nm$, or $k\equiv 0$~mod~$nm$, or $m\equiv 2k$~mod~$nm$ then $J_n(m,k)\cong \Z_{nm}$.
\end{lemma}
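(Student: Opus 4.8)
The plan is to show that under each hypothesis the generator $t$ becomes a power of $y$, so that $J=\langle t,y\rangle$ collapses to the cyclic group $\langle y\rangle$. Since the stated abelianization $J/J'\cong\Z_{nm}$ is cyclic, generated by $yJ'$, a cyclic $J$ must be isomorphic to $\Z_{nm}$, and for $m\neq0$ this is finite of order $nm$. The workhorse is the purely algebraic rewriting of the relator $y^{m-k}t^3y^kt^2=1$ in the form $t^2y^m=y^{-k}t^{-3}y^k$ (call it (R)), which is established inside the proofs of Lemmas~\ref{lem:conjugacyrelation4} and~\ref{lem:conjugacyrelation6} and is valid for both $n=4$ and $n=6$, together with the relation $y^{nm}=1$ supplied by those same lemmas. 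The relation $y^{nm}=1$ lets me reduce every exponent of $y$ modulo $nm$, which is exactly how the three congruence hypotheses enter.

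First I would dispose of $m=0$: here the conjugacy relations of Lemmas~\ref{lem:conjugacyrelation4} and~\ref{lem:conjugacyrelation6} reduce directly to $t=1$ (for $n=4$ they read $1=y^{-k}t^{-1}y^k$, forcing $t=1$; for $n=6$ the left and right sides become $t$ and $t^{-1}t=1$). Then $J=\langle y\rangle$ carries only the trivial relator, so $J\cong\Z=\Z_{n\cdot 0}$. For the remaining three hypotheses I assume $m\neq0$ and aim for the single uniform identity $y^m=t^{-5}$. Once this holds, $t^n=1$ gives $t^{-5}=t^{-1}$ when $n=4$ and $t^{-5}=t$ when $n=6$, so $y^m=t^{-1}$ (resp.\ $y^m=t$), whence $t\in\langle y\rangle$ and $J$ is cyclic; this finishes the proof in every case.

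To obtain $y^m=t^{-5}$ I run the three congruences through (R). If $k\equiv0$, then $y^k=1$ and (R) reads $t^2y^m=t^{-3}$, giving $y^m=t^{-5}$ at once. If $m\equiv k$, then $y^k=y^m$ and (R) becomes $y^mt^2y^m=t^{-3}y^m$, whence $y^mt^2=t^{-3}$ and again $y^m=t^{-5}$. The substantive case is $m\equiv 2k$, where neither exponent simplifies by itself; here I set $u=y^k$, note $y^m=u^2$ and $y^{m-k}=u$ (using $m-k\equiv k$ and $y^{nm}=1$), and extract two relations, namely $ut^3u=t^{-2}$ from the relator and $ut^2u=t^{-3}$ from (R). Multiplying the first by the inverse of the second yields $utu^{-1}=t$, i.e.\ $u$ commutes with $t$; substituting this into $ut^2u=t^{-3}$ gives $u^2t^2=t^{-3}$, so $y^m=u^2=t^{-5}$ as required.

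The main obstacle is precisely the case $m\equiv 2k$: unlike the other two congruences it does not collapse directly out of (R), and the decisive step is the non-obvious observation that the relator and (R) \emph{together} force $y^k$ to centralize $t$. Everything else is bookkeeping with $y^{nm}=1$ and the reduction $t^{-5}=t^{\mp1}$ modulo $t^n$. I would deliberately carry the $m\equiv 2k$ computation in the universal form above so that $n=4$ and $n=6$ are treated simultaneously, separating them only at the final step when passing from $y^m=t^{-5}$ to $t=y^{-m}$ (for $n=4$) or $t=y^{m}$ (for $n=6$).
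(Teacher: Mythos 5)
Your proof is correct and follows essentially the same route as the paper: add $y^{nm}=1$, handle $m=0$ via the conjugacy relations, reduce the cases $k\equiv 0$ and $m\equiv k$ to $y^mt^5=1$ directly, and in the case $m\equiv 2k$ first show that $y^k$ commutes with $t$ before collapsing the relator to $y^mt^5=1$. The only cosmetic difference is that you extract the commuting relation from the relator together with its rewritten form $t^2y^m=y^{-k}t^{-3}y^k$, whereas the paper multiplies a cyclic permutation of the relator by the relator's inverse — the same underlying computation.
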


\begin{proof} This amounts to showing that $J_n(m,k)$ is abelian under the stated conditions. If $m=0$ then Lemmas~\ref{lem:conjugacyrelation4} and~\ref{lem:conjugacyrelation6} show that $t=1$ in $J_n(m,k) \cong \Z$. Lemmas~\ref{lem:conjugacyrelation4} and~\ref{lem:conjugacyrelation6} further imply that we can add the relation $y^{nm} = 1$ to the defining relations for $J_n(m,k)$. Therefore if $k$ or $m-k$ is divisible by $nm$ then the relation $y^{m-k}t^3y^kt^2 = 1$ implies that $J_n(m,k)$ is cyclic because $t^5 = t^{\pm 1}$. If $m \equiv 2k \mod nm$, then we have the relation $y^kt^3y^kt^2 = 1$ and so $1 = (y^kt^2y^kt^3)(t^{-2}y^{-k}t^{-3}y^{-k}) = y^kt^2y^kty^{-k}t^{-3}y^{-k}$. Thus we can further add the relation $y^kty^{-k}t^{-1} = 1$. With this, the relation $y^{m-k}t^3y^kt^2 = 1$ becomes $y^{m}t^5 = 1$ and again $J_n(m,k)$ is cyclic. \end{proof}

\begin{proof}[Proof of Theorem~\ref{mainthm:asphericity}.]
Suppose that $n = 4$ or $6$ and that $g,h \in H$ where $\gpres{g,h} \cong \Z_n$, $h\neq g^{-1}$ and
 ${1}/{\mathrm{o}(g)} + {1}/{\mathrm{o}(h)} +{1}/{\mathrm{o}(gh^{-1})} = 1$. Given integers $m$ and $k$, let $G$ be the group defined by the relative presentation $\pres{H,y}{y^{m-k}gy^kh}$. Either $n = 4$ and $\{\mathrm{o}(g),\mathrm{o}(h)\} = \{2,4\}$ or else $n = 6$ and $\{\mathrm{o}(g),\mathrm{o}(h)\} = \{2,3\}$. In any case, there is a homomorphism $J = J_n(m,k) \ra G$ that carries the cyclic subgroup $\gpres{t}$ of $J = \pres{t,y}{t^n,y^{m-k}t^3y^kt^2}$ onto the subgroup $\gpres{g,h}$ of $H$. (For example, if $n=6$, $\mathrm{o}(g) = 3$, and $\mathrm{o}(h) = 2$, then map $y \in J$ to $y^{-1} \in G$ and $t \in J$ to $gh^{-1} \in H$; the word $y^{m-k}t^3y^kt^2$ then maps to $y^{-(m-k)}h^{-1}y^{-k}g^{-1} = (gy^khy^{m-k})^{-1} = 1$ in $G$.)

Assume now that the relative presentation $\pres{H,y}{y^{m-k}gy^kh}$ is aspherical. This implies that the natural homomorphism $H \ra G$ is injective and so the fact that the elements $g$ and $h$ have distinct orders in $H$ implies that $m \neq 0$. With this, the element $t$ has order exactly $n$ in $J$ and the cyclic subgroup $\gpres{t}$ of $J$ maps isomorphically onto $\gpres{g,h} \leq H$. Thus we have a decomposition of $G$ as a free product amalgamated over a cyclic group of order $n$: $G \cong H \ast_{\gpres{g,h} = \gpres{t}} J = H \ast_{\Z_n} J$. Now asphericity further implies that each finite subgroup of $G$ is conjugate to a subgroup of $H$~\cite[(0.4)]{BogleyPride} and so each finite subgroup of $J$ is therefore conjugate to a subgroup of $\gpres{t} \cong \Z_n$. By Lemmas~\ref{lem:conjugacyrelation4} and~\ref{lem:conjugacyrelation6}, the element $y$ has order precisely $|nm|$ in $J$, so we conclude that $m = \pm 1$. Further, in the presentation for $J = J_n(m,k) = \pres{t,y}{t^n, y^{m-k}t^3y^kt^2}$, we can interpret the parameter $k$ modulo $n$, so we are left to consider the groups $J_4(\pm 1,k)$ for $k = 0,1,2,3$ and $J_6(\pm 1,k)$ for $k = 0,1,2,3,4,5$. These groups are all finite (e.g. using computer-assisted coset enumeration~\cite{GAP}), having orders $n, 20, 42$, or $78$, and the order is $n$ precisely when $k \equiv 0$ or $m$ modulo $n$. Therefore, if $\pres{H,y}{y^{m-k}gy^kh}$ is aspherical, then $m = \pm 1$ and $k$ is congruent to either $0$ or $m$ modulo $n$.

Next, if $m = \pm 1$ and $k$ is congruent to either $0$ or $m$ modulo $n$, then the previous remarks show that the natural map $\Z_n \cong \gpres{t} \ra J$ is an isomorphism, and hence the natural map of $H$ into $G \cong H \ast_{\Z_n} J$ is an isomorphism.

Finally, suppose that $H \ra G$ is an isomorphism. The fact that $g,h \neq 1$ in $G$ implies that $t \neq 1$ in $J$, so that $m \neq 0$ and $G \cong H \ast_{\Z_n} J$ as before. Because $G = H$ we have $J = \Z_n$ and so $m = \pm 1$ and $k \equiv 0$ or $m$ as before. Let $Z$ be a $K(\Z_n,1)$-complex with two-skeleton modeled on the presentation $\pres{t}{t^n}$ for $\Z_n$ and let $W = Z \vee S^1_y \cup c^2$ be obtained by attaching a two-cell with boundary representing the word $y^{m-k}t^3y^kt^2$. Thus by hypothesis $\Z_n \cong \pi_1(Z) \ra \pi_1(W)$ is an isomorphism. Let $X$ be a $K(H,1)$-complex containing $Z$ as a subcomplex and let $Y = X \cup W$ so that $X \cap W = Z$. Since the relator $r = y^{m-k}gy^kh$ is not a proper power, it is necessary and sufficient to prove that $Y$ is aspherical. Consider the two-cell of $W-Z$. Because $n = 4$ or $6$, $m = \pm 1$, and $k \equiv 0$ or $m$ modulo $n$, in the presence of the relation $t^n = 1$, the attaching map for this two-cell is freely homotopic (in $Z \cup W^{(1)}$) to one modeled on the word $yt^{\pm 1}$. Up to homotopy, the inclusion of $Z$ in $W$ is therefore an elementary expansion and in particular a homotopy equivalence. Thus $W$ is aspherical. Given this, the union $Y = X \cup_Z W$ is the union of two aspherical complexes with aspherical intersection. Since the inclusions of $Z$ in $X$ and in $W$ both induce injective homomorphisms on their fundamental groups, $Y$ is aspherical by a theorem of J.~H.~C.~Whitehead~\cite{Whitehead39}. Thus the relative presentation $\pres{H,y}{y^{m-k}gy^kh}$ is aspherical, which completes the proof of Theorem~\ref{mainthm:asphericity}.
\end{proof}


\section{The derived subgroup $J_n(m,k)'$}\label{sec:derived}

We now show that for $n=4$ or $6$ the group $J_n(m,k)'$ has finite abelianization. Together with the fact that $J_n(m,k)^\mathrm{ab}\cong \Z_{nm}$ this proves Theorem~\ref{mainthm:structure}\textit{(a)}. The cases $n = 4$ and $6$ are handled separately. The group $J_n(m,k)'$ is trivial if $m=0$, by Lemma~\ref{lem:abelianJ(m,k)} and since $J_n(m,k)\cong J_n(-m,-k)$ we can assume $m\geq 1$.

\begin{theorem}\label{thm:J'abfinite4}
If $m\geq 1$ then the derived subgroup $J_4(m,k)'$ has a presentation
\begin{alignat}{1}
\pres{x_0,\ldots , x_{4m-1}}{x_ix_{i+m}x_{i+k}^{-1},x_{i}x_{i+m}x_{i+2m}x_{i+3m}\ (0\leq  i < 4m)}\label{eq:J4'bicyclic}
\end{alignat}
and has finite abelianization.
\end{theorem}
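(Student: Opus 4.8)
The plan is to realise $J_4(m,k)'$ as the kernel of the abelianisation map $J \ra \Z_{4m}$ and to compute a presentation for it by Reidemeister--Schreier rewriting, exploiting the semidirect decomposition from Lemma~\ref{lem:conjugacyrelation4}. First I would record that $J^{\mathrm{ab}} \cong \Z_{4m}$ is generated by the image of $y$, with the image of $t$ equal to $-m$ times that of $y$; hence $J'$ has index $4m$ and $\{y^i : 0 \leq i < 4m\}$ is a Schreier transversal. Since $y^{4m} = 1$ in $J$ by Lemma~\ref{lem:conjugacyrelation4}, I would work with the expanded presentation $\pres{t,y}{t^4,\, y^{m-k}t^3y^kt^2,\, y^{4m}}$, so that the single Schreier generator arising from the letter $y$ (the wrap-around generator $y^{4m}$) is immediately killed by the relator $y^{4m}$. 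The surviving Schreier generators are those coming from $t$, which I would organise as $x_i = y^{-i} t y^{m+i}$ for $i \in \Z$; these lie in $J'$, are periodic modulo $4m$ because $y^{4m} = 1$, and satisfy the shift relation $x_{i+1} = y^{-1} x_i y$.

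Next I would rewrite the two defining relators over this transversal. Writing $t = x_0 y^{-m}$ and repeatedly sliding $y^{-m}$ to the right using $y^{-m} x_i = x_{i+m} y^{-m}$, a short induction gives $t^j = x_0 x_m \cdots x_{(j-1)m} y^{-jm}$; in particular the relator $t^4$ rewrites, after applying the shift $x_i \mapsto x_{i+1}$, to the family $x_i x_{i+m} x_{i+2m} x_{i+3m}$ for $0 \leq i < 4m$. For the second relator I would use the conjugacy relation $t^2 y^m = y^{-k} t y^k$ extracted in the proof of Lemma~\ref{lem:conjugacyrelation4}: it yields $x_0 x_m = (t y^m)(y^{-m} t y^{2m}) = t^2 y^{2m} = y^{-k} t y^{k+m} = x_k$, and shifting turns this into the family $x_i x_{i+m} x_{i+k}^{-1}$ for $0 \leq i < 4m$. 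Reidemeister--Schreier then guarantees that these two families, on the generators $x_0, \ldots, x_{4m-1}$, constitute a presentation of $J_4(m,k)'$, which is exactly~(\ref{eq:J4'bicyclic}).

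Finally, for finiteness of the abelianisation I would abelianise~(\ref{eq:J4'bicyclic}). The relation matrix is the vertical stack of two $4m \times 4m$ circulants with representer polynomials $f_1(x) = 1 + x^m - x^k$ and $f_2(x) = 1 + x^m + x^{2m} + x^{3m}$, so, diagonalising the shift action over $\C$, the abelianisation is finite if and only if $f_1$ and $f_2$ have no common root among the $4m$-th roots of unity. If $\lambda^{4m} = 1$ and $f_2(\lambda) = 0$, then setting $\mu = \lambda^m$ (a fourth root of unity) the equation $1 + \mu + \mu^2 + \mu^3 = 0$ forces $\mu \in \{i, -1, -i\}$, whence $1 + \mu \in \{1+i,\, 0,\, 1-i\}$. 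None of these equals $\lambda^k$: the numbers $1 \pm i$ have modulus $\sqrt{2}$ while $\lambda^k$ has modulus $1$, and $0$ is not a root of unity. Hence $f_1(\lambda) = (1+\mu) - \lambda^k \neq 0$, so $f_1$ and $f_2$ share no such root and $(J_4(m,k)')^{\mathrm{ab}}$ is finite; this holds for every $k$ and every $m \geq 1$.

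The main obstacle is the Reidemeister--Schreier bookkeeping in the middle step: keeping the index arithmetic modulo $4m$ consistent, confirming that the wrap-around generator really is eliminated by the relator $y^{4m}$, and checking that the rewrite of the length-five relator $y^{m-k}t^3y^kt^2$ collapses to the three-term relator $x_i x_{i+m} x_{i+k}^{-1}$ rather than a longer word (this is precisely where the conjugacy relation of Lemma~\ref{lem:conjugacyrelation4} does the work). Once the presentation~(\ref{eq:J4'bicyclic}) is in hand, the finiteness of the abelianisation reduces to the short root-of-unity computation above, whose only genuine input is that a nontrivial root of unity cannot have modulus $\sqrt{2}$.
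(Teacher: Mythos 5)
Your proposal is correct and follows essentially the same route as the paper: adjoin the relation $y^{4m}=1$ from Lemma~\ref{lem:conjugacyrelation4}, use $t^4=1$ to shorten the long relator, apply Reidemeister--Schreier over the transversal of powers of $y$, and then establish finiteness of the abelianisation via representer polynomials evaluated at $4m$-th roots of unity, with the modulus comparison $|1\pm i|=\sqrt{2}\neq 1=|\lambda^k|$ doing the decisive work. The only (harmless) difference is that the paper discards the second relator family and shows that the single polynomial $1+x^m-x^k$ has no root among the $4m$-th roots of unity, so that $J_4(m,k)'$ is a quotient of the cyclically presented group $G_{4m}(x_0x_mx_k^{-1})$, which already has finite abelianisation, whereas you keep both circulant blocks and verify that they have no common root.
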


\begin{proof} Let $J = J_4(m,k)$. Adding the relation $y^{4m}=1$ (Lemma \ref{lem:conjugacyrelation4}) and using $t^4 = 1$ to rewrite $y^{m-k}t^3y^kt^2 = 1$ as $y^{m-k}t^{-1}y^kt^2 = 1$, the presentation~(\ref{eq:J4'bicyclic}) of $J'$ arises via a straightforward application of the Reidemeister-Schreier process. This presentation shows that $J'$ is a homomorphic image of the cyclically presented group $G = G_{4m}(x_0x_mx_{k}^{-1})$ and so $J/J'$ is a homomorphic image of $G/G'$, which is finite~\cite[Theorem~4]{Williams09}.

A quick proof that $G/G'$ is finite uses the representer polynomial $f_w(x) = 1+x^m-x^{k}$. It suffices to show that if $\lambda$ is a $4m$-th root of unity, then $f_w(\lambda) \neq 0$. Given $\lambda^{4m} = 1$, the quantity $\zeta = \lambda^m$ is a fourth root of unity, so the argument proceeds in cases. If $\zeta = 1$, then $f_w(\lambda) = 1 \neq 0$. If $\zeta = -1$, then $f_w(\lambda) = -\lambda^k \neq 0$. If $\zeta = \pm i$, then $f_w(\lambda) = 1\pm i - \lambda^k$. This last quantity is non-zero because $1\pm i \neq \lambda^k$ due to the fact that these numbers have different complex moduli: $|1\pm i| = \sqrt{2} \neq 1 = |\lambda^k|$. \end{proof}

\begin{theorem}\label{thm:J'abfinite6}
If $m\geq 1$ then the derived subgroup $J_6(m,k)'$ has a presentation
\begin{alignat}{1}
\twolinepres{x_0,\ldots , x_{6m-1}}{x_ix_{i+m}x_{i+2m}(x_{i+k}x_{i+m+k})^{-1},}{ x_{i}x_{i+m}x_{i+2m}x_{i+3m}x_{i+4m}x_{i+5m}\ (0\leq  i < 6m)}\label{eq:J6'bicyclic}
\end{alignat}
and has finite abelianization.
\end{theorem}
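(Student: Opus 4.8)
The plan is to follow the template of Theorem~\ref{thm:J'abfinite4} as closely as possible, absorbing the extra combinatorial bulk that the case $n=6$ introduces. First I would invoke Lemma~\ref{lem:conjugacyrelation6} to adjoin the relation $y^{6m}=1$ to the defining relations of $J=J_6(m,k)$ without changing the group, so that $J/J'\cong\Z_{6m}$ is generated by the image of $y$ and $\{y^i : 0\le i<6m\}$ is a finite Schreier transversal for $J'$. Applying the Reidemeister--Schreier process with this transversal, the surviving generators are the conjugates of $t$ that lie in $J'$, which I index as $x_0,\dots,x_{6m-1}$ with the shift (conjugation by $y$) acting as $x_i\mapsto x_{i+1}$, subscripts mod $6m$. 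Reading the relator $y^{m-k}t^3y^kt^2$ syllable by syllable, each of its two runs of $t$'s contributes a block of consecutive generators in arithmetic progression with common difference $m$, a length-three block from $t^3$ and a length-two block from $t^2$, while the intervening $y^k$ advances the index by $k$; this produces the first relator family $x_ix_{i+m}x_{i+2m}(x_{i+k}x_{i+m+k})^{-1}$. Unlike the $n=4$ case, the relation $t^6=1$ affords no shortening of $t^3$, so this family retains full length five; meanwhile $t^6=1$ itself telescopes to the six-term family $x_ix_{i+m}\cdots x_{i+5m}$. This bookkeeping is the routine-but-tedious part, entirely parallel to the $n=4$ computation but longer, and it yields presentation~(\ref{eq:J6'bicyclic}).

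For the finite abelianization I would read off from~(\ref{eq:J6'bicyclic}) that discarding the six-term family exhibits $J_6(m,k)'$ as a homomorphic image of the \emph{cyclically} presented group $G=G_{6m}(w)$ with $w=x_0x_mx_{2m}x_{m+k}^{-1}x_k^{-1}$. Since the abelianization of a quotient is a quotient of the abelianization, it suffices to prove that $G^\mathrm{ab}$ is finite. By the circulant criterion recorded in~(\ref{eq:representer}), this holds precisely when the representer polynomial $f_w(x)=1+x^m+x^{2m}-x^k-x^{m+k}$ is nonzero at every $6m$-th root of unity. This is the same polynomial that governs the order in Theorem~\ref{mainthm:orders}, which is a reassuring consistency check.

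Finally, the root-of-unity verification. For $\lambda^{6m}=1$ I set $\zeta=\lambda^m$, a sixth root of unity, and factor $f_w(\lambda)=(1+\zeta+\zeta^2)-\lambda^k(1+\zeta)$. When $\zeta=1$ this is $3-2\lambda^k$, nonzero since $3\ne 2=|2\lambda^k|$; when $\zeta=-1$ the second summand vanishes and $f_w(\lambda)=1$; when $\zeta$ is a primitive cube root, $1+\zeta+\zeta^2=0$ while $1+\zeta\ne0$, so $f_w(\lambda)=-\lambda^k(1+\zeta)\ne0$. The delicate case, and the one I expect to be the crux, is $\zeta$ a primitive sixth root, where $\zeta^2=\zeta-1$ forces both $1+\zeta+\zeta^2=2\zeta$ and $1+\zeta$ to be nonzero; here nonvanishing cannot be read off termwise and instead rests on the modulus comparison $|2\zeta|=2\ne\sqrt3=|1+\zeta|=|\lambda^k(1+\zeta)|$. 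This is exactly the ingredient that replaces the comparison $\sqrt2\ne1$ at the imaginary fourth roots in the proof of Theorem~\ref{thm:J'abfinite4}, and it is where the extra $x^{2m}$ term genuinely alters the analysis. Having excluded every case, $G^\mathrm{ab}$ is finite, and therefore so is $J_6(m,k)'/J_6(m,k)''$, which completes the proof.
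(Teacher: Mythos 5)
Your overall route coincides with the paper's: adjoin $y^{6m}=1$ via Lemma~\ref{lem:conjugacyrelation6}, apply Reidemeister--Schreier over the transversal $\{y^i:0\le i<6m\}$, pass to the cyclically presented quotient $G_{6m}(x_0x_mx_{2m}(x_kx_{m+k})^{-1})$, and show $f_w(x)=1+x^m+x^{2m}-x^k-x^{m+k}$ has no zero among the $6m$-th roots of unity by exactly the same four-case analysis on $\zeta=\lambda^m$, including the crucial modulus comparison $|2\zeta|=2\neq\sqrt{3}=|\lambda^k(1+\zeta)|$ at the primitive sixth roots. That half of your argument is sound.

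The gap is in the derivation of the presentation~(\ref{eq:J6'bicyclic}) itself. Every $t$ in the relator $y^{m-k}t^3y^kt^2$ occurs with exponent $+1$, so its Reidemeister--Schreier rewrite is a \emph{positive} word in the generators $x_j=y^jty^{-(j+m)}$: tracing cosets gives $x_{i+m-k}x_{i+2m-k}x_{i+3m-k}x_{i+4m}x_{i+5m}$, i.e.\ after re-indexing $x_ix_{i+m}x_{i+2m}x_{i+3m+k}x_{i+4m+k}$. No choice of transversal or generator convention can make this rewrite directly produce $x_ix_{i+m}x_{i+2m}(x_{i+k}x_{i+m+k})^{-1}$, which contains two inverted letters; and your remark that ``$t^6=1$ affords no shortening of $t^3$'' points away from the needed repair rather than toward it. Two fixes are available: either replace $t^3$ by $t^{-3}$ in the relator before rewriting (legitimate modulo $t^6$, and the analogue of the $t^3\mapsto t^{-1}$ move used for $n=4$), which does yield the inverted pair directly; or, as the paper does, keep the positive relators $R_i=x_ix_{i+m}x_{i+2m}x_{i+3m+k}x_{i+4m+k}$ and observe that modulo the six-term relators $S_j$ one has $x_ix_{i+m}x_{i+2m}=(x_{i+3m}x_{i+4m}x_{i+5m})^{-1}$ from $S_i$ while $x_{i+k}x_{i+m+k}=(x_{i+3m}x_{i+4m}x_{i+5m})^{-1}$ from the shift $R_{i+3m}$, so the family $\{R_i\}$ may be traded for the family $x_ix_{i+m}x_{i+2m}(x_{i+k}x_{i+m+k})^{-1}$. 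Without one of these steps the presentation you assert is not the one your computation produces, so the first assertion of the theorem is not yet established; the repair is short but must be made.
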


\begin{proof} Let $J = J_6(m,k)$. Adding the relation $y^{6m}=1$ (Lemma~\ref{lem:conjugacyrelation6}), a straightforward application of the Reidemeister-Schreier process yields a presentation
\begin{alignat*}{1}
 J' = \twolinepres{x_0,\ldots , x_{6m-1}}{x_ix_{i+m}x_{i+2m}x_{i+3m+k}x_{i+4m+k},}{ x_{i}x_{i+m}x_{i+2m}x_{i+3m}x_{i+4m}x_{i+5m}\ (0\leq  i < 6m)}
 \end{alignat*}
for the derived subgroup.

In the presence of the relations $x_{i}x_{i+m}x_{i+2m}x_{i+3m}x_{i+4m}x_{i+5m} = 1$ the relations $x_ix_{i+m}x_{i+2m}x_{i+3m+k}x_{i+4m+k} = 1$ are equivalent to $x_ix_{i+m}x_{i+2m}=x_{i+k}x_{i+m+k}$ ($0\leq i <6m$) and so we obtain the presentation~(\ref{eq:J6'bicyclic}). This presentation shows that $J'$ is a homomorphic image of the cyclically presented group $G = G_{6m}(x_0x_mx_{2m}(x_{k}x_{m+k})^{-1})$ and so $J/J'$ is a homomorphic image of $G/G'$, which is finite~\cite[Theorem~5.1]{CRS03}. 

A quick proof that $G/G'$ is finite uses the representer polynomial
\[f_w(x) = 1+x^m+x^{2m}-x^{k}-x^{m+k}.\]
It suffices to show that if $\lambda$ is a $6m$-th root of unity, then $f_w(\lambda) \neq 0$. Given $\lambda^{6m} = 1$, the quantity $\zeta = \lambda^m$ is a sixth root of unity, so we argue in cases. If $\zeta = 1$, then $f_w(\lambda) = 3-2\lambda^k \neq 0$. The fact that $3 \neq 2\lambda^k$ is due to the fact that these two numbers have different complex moduli: $|2\lambda^k| = 2 \neq 3 = |3|$.  If $\zeta = -1$, then $f_w(\lambda) = 1 \neq 0$. If $\zeta^3 = 1$ and $\zeta \neq 1$, then $f_w(\lambda) = -\lambda^k(1+\zeta) \neq 0$ because $1+\zeta=-\zeta^2 \neq 0$. And finally, if $\zeta^3 = -1$ and $\zeta \neq -1$, $0 = \zeta^3+1 = (\zeta + 1)(\zeta^2-\zeta+1)$ so $\zeta^2 = \zeta - 1$; thus $f_w(\lambda) = 2\zeta - \lambda^k(1+\zeta)$. This last quantity is non-zero because if $\zeta$ is a primitive sixth root of unity, then $2\zeta$ and $\lambda^k(1+\zeta)$ have different complex moduli: $|\lambda^k(1+\zeta)| = \sqrt{3} \neq 2  = |2\zeta|$.
\end{proof}

\begin{corollary}\label{cor:freeproduct}
If $n = 4$ or $6$ and $d = \gcd(m,k)$, then $J_n(m,k)'$ is the free product of $d$ copies of $J_n(m/d,k/d)'$.\end{corollary}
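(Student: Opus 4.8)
The plan is to read the decomposition directly off the bicyclic presentations of $J_n(m,k)'$ furnished by Theorems~\ref{thm:J'abfinite4} and~\ref{thm:J'abfinite6}. The key observation is that every offset appearing in the subscripts of these relators is a multiple of $d = \gcd(m,k)$. For $n = 4$ the relators $x_ix_{i+m}x_{i+k}^{-1}$ and $x_ix_{i+m}x_{i+2m}x_{i+3m}$ involve only the offsets $m$, $k$, $2m$, $3m$; for $n = 6$ the relators involve the offsets $m$, $2m$, $k$, $m+k$, $3m$, $4m$, $5m$. Since $d \mid m$ and $d \mid k$, each of these offsets (including $m+k$) is divisible by $d$, and since $d \mid m$ we also have $d \mid nm$, so residue modulo $d$ is a well-defined invariant on the index set $\Z_{nm}$ that is preserved by every relator.

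First I would partition the generators $x_0, \ldots, x_{nm-1}$ into the $d$ classes $B_j = \{x_i : i \equiv j \bmod d\}$ for $0 \le j < d$. By the observation above, each relator of the presentation lies in the free subgroup generated by a single class $B_j$. A presentation whose generating set partitions into blocks, with every relator supported on one block, presents the free product of the groups defined by the individual blocks. Hence $J_n(m,k)'$ is the free product over $j = 0, \ldots, d-1$ of the groups $P_j$ presented by the generators $B_j$ together with the relators supported on $B_j$.

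Next I would identify each factor $P_j$ with $J_n(m/d, k/d)'$ by reindexing. Writing $m' = m/d$ and $k' = k/d$, within the class $B_j$ set $y_s = x_{j + sd}$ for $0 \le s < nm/d = nm'$, so that subscript arithmetic modulo $nm$ becomes arithmetic in $s$ modulo $nm'$. Under this substitution the offset $am$ becomes $am'$ and the offset $k$ becomes $k'$ (and $m+k$ becomes $m'+k'$), so the relators supported on $B_j$ become precisely $y_sy_{s+m'}y_{s+k'}^{-1}$ and $y_sy_{s+m'}y_{s+2m'}y_{s+3m'}$ for $n=4$ (respectively the two families of relators for $n=6$) as $s$ ranges over $0 \le s < nm'$. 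These are exactly the defining relators of $J_n(m',k')' = J_n(m/d, k/d)'$ given by Theorem~\ref{thm:J'abfinite4} (respectively Theorem~\ref{thm:J'abfinite6}). Thus each $P_j \cong J_n(m/d,k/d)'$ and the corollary follows.

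This argument is essentially a bookkeeping exercise, so I do not anticipate a genuine obstacle; the only points requiring care are the uniform treatment of the two offset patterns for $n = 4$ and $n = 6$ (in particular checking that the mixed offset $m+k$ in the $n=6$ relators is also divisible by $d$) and the verification that the reindexing $s \mapsto j + sd$ carries the index ring $\Z_{nm}$ bijectively onto each residue class and matches the relator families exactly, including the correct count of $nm'$ relators of each type per factor.
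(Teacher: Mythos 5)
Your proposal is correct and follows exactly the same route as the paper, which disposes of the corollary in one sentence by observing (with a citation to Edjvet's work on irreducible cyclic presentations) that the presentations~(\ref{eq:J4'bicyclic}) and~(\ref{eq:J6'bicyclic}) decompose into $d$ disjoint subpresentations of recognizable type. You have simply written out the bookkeeping --- the partition of generators by residue class modulo $d$, the observation that every relator is supported on one class, and the reindexing $y_s = x_{j+sd}$ identifying each block with the presentation of $J_n(m/d,k/d)'$ --- that the paper leaves to the reference.
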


\begin{proof}
As in, for example,~\cite{Edjvet03}, the presentations~(\ref{eq:J4'bicyclic}), (\ref{eq:J6'bicyclic}) decompose into $d$ disjoint subpresentations of recognizable type.
\end{proof}


We now focus on the case $(m,k)=1$. Theorems~\ref{thm:J4'finite} and~\ref{thm:J6'finite} will show that $J_n(m,k)'$ is finite and abelian. Together with Theorems~\ref{thm:J'abfinite4} and \ref{thm:J'abfinite6}, this shows that $J_n(m,k)$ is metabelian. We also show that if $m-2k\not \equiv 0$~mod~$n$ then $J_n(m,k)'$ is cyclic. To complete the proof of Theorem~\ref{mainthm:structure}\textit{(c)} it will remain to address the case $m-2k\equiv 0$~mod~$n$, which we postpone until Section~\ref{sec:noncyclicJ'}. To begin, recall the following.

\begin{lemma}\label{lemma:cyclicBase} If $\gcd(r,s) = 1$ and $n \geq 1$, then $G_n(x_0^rx_1^{-s}) \cong \Z_\alpha$ where $\alpha = |r^n-s^n|$. Any of the elements $x_i\ (0 \leq i < n)$ can serve as a sole generator.
\end{lemma}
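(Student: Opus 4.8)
The plan is to compute the abelianization first and then promote cyclicity from the abelianization to the whole group. The representer polynomial of $w = x_0^r x_1^{-s}$ is $f_w(x) = r - sx$, so by~(\ref{eq:representer}) the order of $G_n(w)^{\mathrm{ab}}$ equals $|\prod_{\lambda^n = 1}(r - s\lambda)| = |r^n - s^n| = \alpha$ whenever this is nonzero. I would first record the elementary fact that, because $\gcd(r,s) = 1$, any prime dividing both $s$ and $\alpha = |r^n - s^n|$ would divide $r^n$ and hence $r$, a contradiction; thus $\gcd(s,\alpha) = \gcd(r,\alpha) = 1$. In the abelianization the relations read $s x_{i+1} = r x_i$, so invertibility of $s$ modulo $\alpha$ shows $G_n(w)^{\mathrm{ab}}$ is cyclic, generated by the image of $x_0$. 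It then remains to lift this to the statement that $G = G_n(w)$ itself is cyclic.

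The heart of the argument is a telescoping computation inside $G$. From the defining relations $x_i^r = x_{i+1}^s$ an easy induction gives $x_{i+j}^{s^j} = x_i^{r^j}$ for all $j \geq 0$ (subscripts mod $n$). Taking $i = 0$, $j = n$ yields $x_0^{s^n} = x_0^{r^n}$, so $x_0^{r^n - s^n} = 1$ and the order of $x_0$ divides $\alpha$. More importantly, I would extract two power relations linking $x_1$ to $x_0$: the defining relation $x_1^s = x_0^r$, and the wrap-around relation $x_1^{r^{n-1}} = x_0^{s^{n-1}}$ obtained from the telescope started at $x_1$ (the case $i = 1$, $j = n-1$, using $x_n = x_0$). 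Since $\gcd(s, r^{n-1}) = 1$, I choose integers $a,b$ with $as + b r^{n-1} = 1$; then, using that powers of a single generator commute, $x_1 = (x_1^s)^a (x_1^{r^{n-1}})^b = (x_0^r)^a (x_0^{s^{n-1}})^b = x_0^{ar + b s^{n-1}} \in \langle x_0 \rangle$. The defining relations are invariant under shifting all subscripts, so the identical computation gives $x_{i+1} \in \langle x_i \rangle$ for every $i$; chaining these inclusions (and using $x_0 = x_n \in \langle x_{n-1}\rangle$) forces $\langle x_0 \rangle = \langle x_1\rangle = \cdots = \langle x_{n-1}\rangle = G$. Hence $G$ is cyclic and every $x_i$ is a generator.

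Finally, being cyclic, $G$ coincides with its abelianization, so $G \cong \Z_\alpha$ in the finite case $\alpha \neq 0$ by the count above (and $G \cong \Z$ when $\alpha = 0$, consistent with $\Z_0$); the relation $x_0^{r^n - s^n} = 1$ already caps the order at $\alpha$, while the surjection onto the abelianization forces equality. I expect the one genuinely non-routine step to be the passage from the abelianization to the full group, that is, showing each $x_i$ actually lies in $\langle x_0 \rangle$ rather than merely doing so after abelianizing. The device that makes this work is producing the \emph{second} relation $x_1^{r^{n-1}} = x_0^{s^{n-1}}$, whose exponent $r^{n-1}$ is coprime to the exponent $s$ of the defining relation, which is exactly what a Bézout combination needs; the coprimality $\gcd(s, r^{n-1}) = 1$ is where the hypothesis $\gcd(r,s) = 1$ is used a second time. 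Everything else, namely the telescope, the circulant/resultant order count, and the chain of subgroup inclusions, is routine.
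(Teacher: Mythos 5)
Your proposal is correct and takes essentially the same route as the paper: the identical telescoping computation $x_i^{r^n} = x_{i+1}^{sr^{n-1}} = \cdots = x_i^{s^n}$ to bound the order of each $x_i$ by $\alpha$, followed by the same B\'ezout device to show $x_{i+1} \in \langle x_i\rangle$ (the paper combines $r^na+sb=1$ with the substitution $x_{i+1}^{r^n}=x_{i+1}^{s^n}$, while you combine $as+br^{n-1}=1$ with the pair $x_1^s=x_0^r$, $x_1^{r^{n-1}}=x_0^{s^{n-1}}$ --- the same trick in slightly different packaging). The only divergence is at the very end, where you pin down the order by observing that a cyclic group equals its abelianization and invoking the circulant/resultant formula~(\ref{eq:representer}), whereas the paper instead exhibits an explicit isomorphism $x_i\mapsto\mu^i$ onto $\Z_\alpha$ and verifies $\mu^n\equiv 1 \bmod \alpha$; both finishes are fine.
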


\begin{proof}
 The isomorphism $G = G_n(x_0^rx_1^{-s}) \cong \Z_\alpha$ is noted without proof in~\cite[page 248]{PrideExactlyTwo}. We include a proof for completeness. Using
\[x_i^{r^n} = x_{i+1}^{sr^{n-1}} = \cdots = x_{i+n-1}^{s^{n-1}r} = x_i^{s^n}\]
it follows that $x_i^{\alpha} = 1$ for all $i$. Choosing integers $a, b$ so that $r^na+sb = 1$, we have
\[x_{i+1} = x_{i+1}^{r^na+sb} = x_{i+1}^{s^na+sb} = x_{i+1}^{s(s^{n-1}a+b)} = x_i^{r(s^{n-1}a+b)}.\]
This implies that $G$ is cyclic, generated by any of the $x_i$. Letting $\mu = r(s^{n-1}a+b)$, an isomorphism $G \ra \Z_\alpha$ maps $x_i$ to $\mu^i$ modulo $\alpha$. This is because $\mu^n \equiv 1 \mod \alpha$: 
\begin{eqnarray*}
\mu^n &=& r^n(s^{n-1}a+b)^n\\
&=&r^n \left(s^{n-1}a + \frac{1-ar^n}{s}\right)^n\\
&=&r^n\frac{(s^na+1-ar^n)^n}{s^n}\\
&\equiv &(1-a(r^n-s^n))^n~\mathrm{mod}~\alpha\\
&\equiv& 1~\mathrm{mod}~\alpha,
\end{eqnarray*}
whence the result.
\end{proof}

\begin{corollary}\label{cor:h=ginversefinite}
Suppose $H\cong \Z_n$ and $g,h$ generate $H$ with $g=h^{-1}$. If $(m,k)=1$ then the group~$G$ defined by the relative presentation $\pres{y,H}{y^{m-k}gy^kh}$ is finite of order $n|(m-k)^n-k^n|$.
\end{corollary}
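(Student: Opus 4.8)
The plan is to recognize $G$ as an ordinary two-generator two-relator group that retracts onto $\Z_n$ with cyclic kernel, and then to read off the order of that kernel directly from Lemma~\ref{lemma:cyclicBase}. Since $g=h^{-1}$ and $\gpres{g,h}=H\cong\Z_n$, the element $h$ generates $H$ and has order exactly $n$; writing the relator as $r=y^{m-k}h^{-1}y^kh$, the relative presentation becomes the ordinary presentation $G=\pres{y,h}{h^n,\,y^{m-k}h^{-1}y^kh}$.

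First I would introduce the retraction $\phi\colon G\ra\Z_n$ determined by $y\mapsto 0$ and $h\mapsto 1$. This is well-defined because the exponent sum of $h$ in $r$ is $-1+1=0$, and it is split by $\gpres{h}$. Hence $\phi$ is surjective with $\phi(h)$ a generator, so $h$ has order exactly $n$ and $G=N\rtimes\gpres{h}$, where $N=\ker\phi$ is the normal closure of $y$ and $[G:N]=n$. In particular $G$ is finite if and only if $N$ is, and $|G|=n\,|N|$.

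Next I would compute $N$ by running the standard correspondence $E_n(w)=G_n(w)\rtimes_\theta\Z_n$ (recalled in the introduction) in reverse, via the Reidemeister--Schreier process with transversal $\{1,h,\ldots,h^{n-1}\}$. Setting $x_i=h^iyh^{-i}$ with subscripts read modulo $n$ (legitimate since $h^n=1$), the identity $h^{-1}y^kh=x_{n-1}^k$ rewrites $r$ and its conjugates $h^irh^{-i}$ as the cyclic family $x_i^{m-k}x_{i-1}^k=1$. Thus $N$ is the cyclically presented group $G_n(x_0^{m-k}x_{n-1}^k)$, which after applying the shift automorphism and using $ab=1\iff ba=1$ is isomorphic to $G_n(x_0^kx_1^{m-k})$. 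Matching against Lemma~\ref{lemma:cyclicBase} with base word $x_0^rx_1^{-s}$, take $r=k$ and $s=k-m$; the hypothesis $(m,k)=1$ gives $(r,s)=(k,k-m)=1$, so $N\cong\Z_\alpha$ with $\alpha=|k^n-(k-m)^n|$. Because $n$ is even in the cases at hand, $(k-m)^n=(m-k)^n$, so $\alpha=|(m-k)^n-k^n|$, and combining with $|G|=n\,|N|$ yields the claimed order $n|(m-k)^n-k^n|$.

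The main obstacle is the bookkeeping in the third step: one must check carefully that the Reidemeister--Schreier rewriting produces \emph{exactly} the cyclic presentation $G_n(x_0^kx_1^{m-k})$ with no spurious relations surviving from $h^n$, and then track the signs through the substitution $x_0^rx_1^{-s}$ so that Lemma~\ref{lemma:cyclicBase} delivers $|(m-k)^n-k^n|$ rather than its parity-twisted counterpart. (The degenerate cases $|m-k|=|k|$, where $\alpha=0$ and $N\cong\Z$, are precisely the ``exceptional circumstances'' under which $G$ fails to be finite, and are implicitly excluded by the nonvanishing of the order formula.)
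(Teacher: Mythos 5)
Your proof is correct and follows essentially the same route as the paper's, which simply identifies $G$ as the $\Z_n$-extension of the cyclically presented group $G_n(x_0^{m-k}x_1^{-k})$ and applies Lemma~\ref{lemma:cyclicBase}. The only difference is that you spell out the Reidemeister--Schreier rewriting and the sign bookkeeping explicitly; your observation that the stated order formula depends on $n$ being even (as it is in the cases of interest, $n=4$ or $6$) is a fair point that the paper's one-line proof glosses over.
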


\begin{proof}
If $(m,k)=1$ then the lemma implies that $G_n(x_0^{m-k}x_1^{-k})\cong \Z_{|(m-k)^n-k^n|}$ and thus its $\Z_n$-extension $\pres{y,t}{t^n,y^{m-k}ty^kt^{-1}}$ (which is isomorphic to~$G$) is finite of order $|(m-k)^n-k^n|$.
\end{proof}

\begin{theorem}\label{thm:J4'finite} If $m \neq 0$ and $\gcd(m,k) = 1$, then $J_4(m,k)'$ is finite and abelian.
If $m-2k\not \equiv 0$~mod~$4$ then $J_4(m,k)'$ is cyclic and if $m-2k\equiv 0$~mod~$4$ then $J_4(m,k)'$ is generated by $x_0,x_m$ of the presentation~(\ref{eq:J4'bicyclic}) and the relations $x_i^2=x_{i+2k-m}$ hold $(0\leq i<4m)$.
\end{theorem}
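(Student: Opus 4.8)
The plan is to work throughout with the bicyclic presentation~(\ref{eq:J4'bicyclic}) of $J_4(m,k)'$, reading all subscripts modulo $4m$, and to establish commutativity directly from the two families of relations before extracting the cyclic structure. Write the relations as $x_{i+k} = x_ix_{i+m}$ (call it R1) together with the four-term relation $x_ix_{i+m}x_{i+2m}x_{i+3m} = 1$. First I would substitute R1 into the latter: since $x_ix_{i+m} = x_{i+k}$ and (R1 at index $i+2m$) $x_{i+2m}x_{i+3m} = x_{i+2m+k}$, the four-term relation collapses to $x_{i+k}x_{i+2m+k} = 1$, and as $i$ ranges over all residues this gives $x_{j+2m} = x_j^{-1}$ for all $j$ (call it R2), recovering the content of Lemma~\ref{lem:conjugacyrelation4} at the level of the derived group.

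The key step is commutativity. I would compute $x_{i+k+2m}$ in two ways: by R2 it equals $x_{i+k}^{-1} = x_{i+m}^{-1}x_i^{-1}$, while applying R1 at index $i+2m$ and then R2 gives $x_{i+2m}x_{i+3m} = x_i^{-1}x_{i+m}^{-1}$. Equating and inverting yields $x_ix_{i+m} = x_{i+m}x_i$, so $x_i$ commutes with $x_{i+m}$ for every $i$. To upgrade this to full commutativity I would call a residue $d$ \emph{good} if $x_i$ commutes with $x_{i+d}$ for all $i$, note that $0,\pm m,2m$ are good and that good residues are closed under negation, and prove the closure rule: if $d$ and $d+m$ are good then so is $d+k$, because R1 writes $x_{i+d+k} = x_{i+d}x_{i+d+m}$ as a product of two elements each commuting with $x_i$. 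An induction on $a$, applying this rule for every $b$ and starting from the good multiples of $m$, then shows $ak+bm$ is good for all $a,b$; as $\gcd(m,k)=1$ the residues $ak+bm$ exhaust $\Z/4m$, so $J_4(m,k)'$ is abelian. Being abelian it coincides with its abelianization, which is finite by Theorem~\ref{thm:J'abfinite4}.

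With commutativity in hand I would pass to additive notation, writing $a_i$ for $x_i$, so R1 reads $a_{i+k} = a_i + a_{i+m}$ and R2 reads $a_{i+2m} = -a_i$. Applying R1 at index $i+k$ and simplifying with R2 gives $a_{i+2k} = (a_i+a_{i+m})+(a_{i+m}-a_i) = 2a_{i+m}$; reindexing produces $a_{i+2k-m} = 2a_i$, which is the asserted relation $x_i^2 = x_{i+2k-m}$ for all $i$. Generation by $x_0,x_m$ follows by an induction using R1: writing an arbitrary index as $\alpha k + \beta m$ (possible since $\gcd(m,k)=1$), the base case $\alpha=0$ gives $a_{\beta m}\in\langle a_0,a_m\rangle$ from R2, and R1 in the form $a_{\alpha k+\beta m+k} = a_{\alpha k+\beta m}+a_{\alpha k+(\beta+1)m}$ advances the induction on $\alpha$. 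This already establishes every claim in the case $m-2k\equiv 0\bmod 4$.

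For the cyclic case I would exploit the relation $a_{i+d} = 2a_i$ with $d = 2k-m$, which iterates to $a_{jd} = 2^{\,j}a_0$, so that every index in the additive subgroup $\langle d\rangle\le\Z/4m$ contributes a $\Z$-multiple of $a_0$. The arithmetic heart is the computation that, using $\gcd(m,k)=1$, the quantity $\gcd(2k-m,4m)$ equals $1$ when $m$ is odd and $2$ when $m\equiv 0\bmod 4$ — precisely the residual cases of $m-2k\not\equiv 0\bmod 4$ — whereas $m\equiv 2\bmod 4$ forces $4\mid 2k-m$. When the gcd is $1$ the orbit $\langle d\rangle$ is all of $\Z/4m$, so $J_4(m,k)' = \langle a_0\rangle$ is immediately cyclic. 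When the gcd is $2$ the orbit consists of the even residues, so all even-indexed $a_i$ lie in $\langle a_0\rangle$; here $m$ is even and $k$ odd, so R1 expresses each odd-indexed $a_{i+k}$ as a sum of two even-indexed generators, again landing in $\langle a_0\rangle$, and cyclicity follows. I expect this final parity bookkeeping modulo $4$ — correctly matching $\gcd(2k-m,4m)\in\{1,2\}$ to the condition $m-2k\not\equiv0\bmod4$ and recovering the odd-indexed generators in the gcd-$2$ subcase — to be the main obstacle; the commutativity argument, though it hinges on the nonobvious two-way evaluation of $x_{i+k+2m}$, is otherwise the conceptual crux.
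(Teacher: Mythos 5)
Your proposal is correct, and its skeleton coincides with the paper's: both extract from the presentation~(\ref{eq:J4'bicyclic}) the relations $x_{i+2m}=x_i^{-1}$, $[x_i,x_{i+m}]=1$, and $x_i^2=x_{i+2k-m}$, and both then split on $\gcd(2k-m,4m)$ being $1$ ($m$ odd), $2$ ($m\equiv 0$ mod $4$), or a multiple of $4$, recovering the odd-indexed generators from the even-indexed ones via $x_{i+k}=x_ix_{i+m}$ in the $\gcd=2$ case. The one genuine difference is your up-front proof that $J_4(m,k)'$ is abelian, via the induction on ``good'' differences $ak+bm$ with the closure rule that goodness of $d$ and $d+m$ forces goodness of $d+k$; this argument is valid (the multiples of $m$ are good, goodness is closed under negation, and since $a$ may be shifted by multiples of $4m$ every residue mod $4m$ is of the form $ak+bm$ with $a\geq 0$), but it does not appear in the paper. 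The paper establishes only $[x_i,x_{i+m}]=1$ and obtains abelianness a posteriori: it is automatic in the cyclic cases, and when $4\mid m-2k$ it follows because $J'$ is shown to equal the subgroup generated by the two commuting elements $x_0$ and $x_m$. What your extra lemma buys is the ability to work additively throughout and to replace the paper's appeal to Lemma~\ref{lemma:cyclicBase} (which packages the relations $x_i^2=x_{i+2k-m}$ along a congruence class modulo $\gcd(2k-m,4m)$ into a finite cyclic group, giving that each generator in the class is a power of any other) by the one-line iteration $a_{jd}=2^{j}a_0$; the cost is the additional induction. Both routes obtain finiteness from Theorem~\ref{thm:J'abfinite4} once abelianness is in hand.
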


\begin{proof}
As before, assume that $m \geq 1$ and $0 \leq k < 4m$. Setting $J = J_4(m,k)$, Theorem~\ref{thm:J'abfinite4} provides that $J'/J''$ is finite and that $J'$ has the presentation~(\ref{eq:J4'bicyclic}).
We refine this presentation as follows. For all $i$, $1=(x_ix_{i+m})(x_{i+2m}x_{i+3m}) = x_{i+k}x_{2m+k}$ and so re-indexing we can add the relations $x_ix_{i+2m} = 1\  (0\leq  i < 4m)$. In addition, $1 = x_ix_{i+m}x_{i+2m}x_{i+3m} =$ $x_ix_{i+m}x_i^{-1}x_{i+m}^{-1}\  (0\leq  i < 4m)$. Thus $$x_{i+m+k} = x_{i+m}x_{i+2m} = x_{i+m}x_i^{-1}$$ and hence $x_{i+m+k}x_{i+k} = (x_{i+m}x_i^{-1})(x_ix_{i+m}) = x_{i+m}^2$. Re-indexing, this gives $$x_i^2 = x_{i+k}x_{i-m+k} = x_{i-m+k}x_{i+k} = x_{i+2k-m}$$ for $0 \leq i < 4m$. Thus $J'$ has a presentation
\begin{alignat*}{1}
J' = \twolinepres{x_0,\ldots,x_{4m-1}}{x_ix_{i+m} = x_{i+k}, [x_i,x_{i+m}] = 1,}{ x_ix_{i+2m} = 1, x_i^2 = x_{i+2k-m} \ (0 \leq i < 4m)}.
\end{alignat*}
Let $\delta = \gcd(m-2k,4m)$. Using Lemma \ref{lemma:cyclicBase}, the relations $x_i^2=x_{i+2k-m}\ (0 \leq i < 4m)$ imply that if $i \equiv j \mod \delta$, then $x_i$ and $x_j$ each generate the same cyclic subgroup of finite order in $J'$. In particular, each of $x_i$ and $x_j$ is expressible as a power of the other. If $m$ is odd then $\delta=1$ so each $x_i$ can be expressed as a power of $x_0$ so $J_4(m,k)'$ is cyclic. If $m\equiv 0$~mod~$4$ then $\delta =2$ and $k$ is odd, thus each $x_i$ ($i$ even) can be expressed as a power of $x_0$, and each $x_i$ ($i$ odd) can be expressed as a power of $x_k$; since also $x_k=x_0x_m$ we have that $x_k$ is a power of $x_0$, so $J_4(m,k)'$ is cyclic.

Assume then that $m-2k\equiv 0$~mod~$4$ and let $A$ be the subgroup of $J'$ generated by $x_0$ and $x_m$. The relations $x_ix_{i+2m}=1$ and $[x_i,x_{i+m}] = 1$ imply that $A$ is abelian and contains the elements $x_0, x_m, x_{2m}$, and $x_{3m}$. It suffices to show that $J' = A$.

We now have that $4|\delta$ so gcd$(m,\delta)=2$ so $m\Z + \delta \Z = 2\Z$. This means that each even number $i$ is congruent to a multiple of $m$ modulo $\delta$ and so, when $i$ is even then $x_i$ is in the subgroup $A$ generated by $x_0$ and $x_m$. Now $m$ is even and $k$ is odd, so the relations $x_i = x_{i-k}x_{i+m-k}$ show that if $i$ is odd, then $x_i$ is a product of generators with even index, and so $x_i \in A$.
\end{proof}

In the case $m-2k\equiv 0$~mod~$4$, we will show (in Theorem~\ref{thm:(m-2k)=0mod4}) that $J_4(m,k)'$ is the direct product of two cyclic groups of equal order.

\begin{theorem}\label{thm:J6'finite} If $m \neq 0$ and $\gcd(m,k) = 1$, then $J_6(m,k)'$ is finite and abelian, and has the presentation
\begin{alignat}{1}
J' = \twolinepres{x_0,\ldots,x_{6m-1}}{x_i^2 = x_{i+k-m}x_{i+k}, x_ix_{i+2m}=x_{i+m}, [x_i,x_{i+m}],}{ x_i^3=x_{i+m-2k}^4\ (0 \leq i < 6m)}.\label{eq:J6'pres}
\end{alignat}
If $m-2k\not \equiv 0$~mod~$6$ then $J_6(m,k)'$ is cyclic and if $m-2k\equiv 0$~mod~$6$ then $J_6(m,k)'$ is generated by $x_0$ and $x_m$.
\end{theorem}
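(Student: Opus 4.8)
The plan is to follow the template of the proof of Theorem~\ref{thm:J4'finite}, starting from the bicyclic presentation~(\ref{eq:J6'bicyclic}) supplied by Theorem~\ref{thm:J'abfinite6} (which already gives that $J'/J''$ is finite) and refining its two families of relations
\[x_ix_{i+m}x_{i+2m}=x_{i+k}x_{i+m+k}, \qquad x_ix_{i+m}x_{i+2m}x_{i+3m}x_{i+4m}x_{i+5m}=1\]
into the four families displayed in~(\ref{eq:J6'pres}) by a sequence of Tietze transformations; throughout I take $m\geq 1$ and $0\leq k<6m$. The first move is to split the length-six relator into two length-three blocks and substitute the length-three relation into each; after re-indexing this produces the auxiliary four-term relations $x_ix_{i+m}x_{i+3m}x_{i+4m}=1$. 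Comparing these with shifts of the length-six relator should then yield, exactly as in the $n=4$ case where the relations $x_ix_{i+2m}=1$ and $[x_i,x_{i+m}]=1$ were extracted, both a \emph{folding relation} $x_ix_{i+2m}=x_{i+m}$ and the commutators $[x_i,x_{i+m}]=1$.

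Once commutativity is in hand the remaining identities can be manipulated additively, and the bookkeeping simplifies because of a useful redundancy: modulo the commutators, the \emph{square relation} $x_i^2=x_{i+k-m}x_{i+k}$ and the folding relation $x_ix_{i+2m}=x_{i+m}$ are interchangeable in the presence of the length-three relation. Indeed, shifting the square relation by $m$ gives $x_{i+m}^2=x_{i+k}x_{i+m+k}$, which matched against $x_ix_{i+m}x_{i+2m}=x_{i+k}x_{i+m+k}$ yields $x_{i+m}^2=x_ix_{i+m}x_{i+2m}$; moving $x_i$ past $x_{i+m}$ via $[x_i,x_{i+m}]=1$ and left-cancelling one factor of $x_{i+m}$ returns precisely $x_{i+m}=x_ix_{i+2m}$. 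So it suffices to establish commutativity together with either one of the square and folding relations. The \emph{power relation} $x_i^3=x_{i+m-2k}^4$ is then obtained by combining these two, in direct analogy with the derivation of $x_i^2=x_{i+2k-m}$ for $n=4$; here the exponents $3$ and $4$ are the $n=6$ counterparts of the exponents $2$ and $1$, and are consistent with the factor $3^m+4^m$ appearing in Theorem~\ref{mainthm:orders}.

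The structural conclusions then follow from Lemma~\ref{lemma:cyclicBase}. Writing $\delta=\gcd(m-2k,6m)$ and letting the shift $i\mapsto i+(m-2k)$ act on the index set $\Z_{6m}$, the power relations restrict on each of the $\delta$ orbits (each of length $N=6m/\delta$) to a cyclic presentation of the form $G_{N}(x_0^{3}x_1^{-4})$; by Lemma~\ref{lemma:cyclicBase} with $(r,s)=(3,4)$ this is the finite cyclic group $\Z_{|3^{N}-4^{N}|}$, in which every orbit member is a power of any other. Hence $J'$ is finite and abelian, generated by the $\delta$ orbit representatives $x_0,\ldots,x_{\delta-1}$.

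It remains to reduce this generating set, and this final step is where I expect the real work to lie. Using $\gcd(m,k)=1$ one has $\gcd(m-2k,m)=\gcd(2,m)$, which controls how the subgroup $m\Z+\delta\Z$ of $\Z_{6m}$ meets the orbit classes; the folding and square relations are then used to express generators of one orbit in terms of those of another, collapsing the orbit representatives to a single generator $x_0$ when $m-2k\not\equiv 0\bmod 6$, and to $x_0,x_m$ when $m-2k\equiv 0\bmod 6$. Because $6$ carries the two prime factors $2$ and $3$, this argument splits into noticeably more subcases than the $n=4$ analysis—organised according to the residues of $m$ modulo $2$ and $3$, and hence according to the ways in which $6\mid m-2k$ can fail—and it is this case analysis, rather than any single hard idea, that makes the $n=6$ argument an order of magnitude longer. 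The sharper description of $J'$ as a direct product of two cyclic groups of equal order when $m-2k\equiv 0\bmod 6$ is deferred to Theorem~\ref{thm:(m-2k)=0mod6}, just as its $n=4$ counterpart is deferred to Theorem~\ref{thm:(m-2k)=0mod4}.
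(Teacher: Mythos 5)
Your overall architecture matches the paper's (reach the presentation~(\ref{eq:J6'pres}), then apply Lemma~\ref{lemma:cyclicBase} and a case analysis on $\delta=\gcd(m-2k,6m)$), but there is a genuine gap at the first step, and it is exactly the step the paper organises differently. You propose to extract the folding relation $x_ix_{i+2m}=x_{i+m}$ and the commutators $[x_i,x_{i+m}]=1$ from the two relator families of~(\ref{eq:J6'bicyclic}) by ``comparing with shifts of the length-six relator, exactly as in the $n=4$ case''. The analogy does not carry over. In the $n=4$ case the four-term relator literally collapses to a commutator once one substitutes $x_{i+2m}=x_i^{-1}$. For $n=6$, write $a=x_i,b=x_{i+m},\ldots,f=x_{i+5m}$: the six-term relators together with your derived four-term relators $abde=bcef=cdfa=1$ reduce, after eliminating $d,e,f$, to the single relation $[ab,bc]=1$ on the free group in $a,b,c$. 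In particular you obtain only $x_{i+3m}=(bc)^{-1}x_i^{-1}(bc)$ --- conjugacy, not the equality $x_ix_{i+3m}=1$ --- and you do not obtain $[x_i,x_{i+m}]=1$ or the folding relation. These relations do hold in $J'$, but deriving them from~(\ref{eq:J6'bicyclic}) requires an essential use of the relation $x_ix_{i+m}x_{i+2m}=x_{i+k}x_{i+m+k}$ across shifts by $k$ (this is the only relation linking distinct residue classes mod $m$), and you supply no such argument. The paper sidesteps this entirely: Lemma~\ref{lem:conjugacyrelation6} establishes the group-level relators $y^mty^{-m}t^{-1}y^{-m}t$ and $(t^2y^m)^2$ cheaply (because $y^mt^3$ and $t^2y^m$ are conjugates of $t^{-2}$ and $t^{-3}$), and Reidemeister--Schreier applied to that \emph{expanded} presentation hands you $x_ix_{i+2m}=x_{i+m}$ and $x_ix_{i+m}x_{i+3m}x_{i+4m}=1$ directly as rewritten relators, after which the commutators and the rest of~(\ref{eq:J6'pres}) follow by short manipulations.

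The second gap is one you acknowledge: the reduction of the generating set from the $\delta$ residue classes to $x_0$ (when $6\nmid m-2k$) or to $\{x_0,x_m\}$ (when $6\mid m-2k$) is precisely where cyclicity is proved, and it is left unexecuted. In the paper this requires four separate arguments for $\delta=1,2,3,4$, each combining the square, folding and power relations to pull stray residue classes into $\langle x_0\rangle$ (typically by showing $x_j^2$ and $x_j^3$ separately lie in $\langle x_0\rangle$ and writing $x_j=x_j^3x_j^{-2}$), and in the case $6\mid m-2k$ a parity argument ($m\Z+\delta\Z=2\Z$, so even-indexed generators lie in $\langle x_0,x_m\rangle$, then $x_i^2,x_i^3\in\langle x_0,x_m\rangle$ for odd $i$). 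Your remark that $J'$ is ``finite and abelian, generated by the $\delta$ orbit representatives'' is also premature at that stage: distinct orbit representatives are not yet known to commute, and finiteness of $J'$ ultimately rests on combining abelianity with the finiteness of $J'/J''$ from Theorem~\ref{thm:J'abfinite6}. Without both missing pieces the theorem is not proved.
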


\begin{proof} As before, assume that $m \geq 1$ and $0 \leq k < 6m$. Setting $J = J_6(m,k)$, by Theorem~\ref{thm:J'abfinite6} we know that $J'/J''$ is finite. Using the expanded presentation for $J$ in Lemma \ref{lem:conjugacyrelation6}, an application of the Reidemeister-Schreier process yields a presentation for $J'$ with generators $x_i = y^ity^{-(i+m)}\ (0 \leq i < 6m)$ and defining relations as follows.
\begin{eqnarray}
x_ix_{i+m}x_{i+2m}x_{i+3m}x_{i+4m}x_{i+5m} &=& 1,\label{eqn:t6}\\
x_ix_{i+m}x_{i+2m}x_{i+3m+k}x_{i+4m+k} &=&1,\label{eqn:W}\\
x_ix_{i+2m}x_{i+m}^{-1}&=&1,\label{eqn:conj}\\
x_ix_{i+m}x_{i+3m}x_{i+4m}&=&1.\label{eqn:square}
\end{eqnarray}
We begin with a series of refinements to this presentation. Using (\ref{eqn:t6}), the relations (\ref{eqn:W}) are equivalent to $x_ix_{i+m}x_{i+2m} = x_{i+k}x_{i+m+k}$. Using (\ref{eqn:conj}), the relations (\ref{eqn:square}) are equivalent to $x_ix_{i+2m}x_{i+4m} = 1$ and then again to $x_ix_{i+3m} = 1$, at which point the relations (\ref{eqn:t6}) become redundant:
\begin{eqnarray*}
(x_ix_{i+m}x_{i+2m})(x_{i+3m}x_{i+4m}x_{i+5m}) &=&x_{i+k}(x_{i+m+k}x_{i+3m+k})x_{i+4m+k}\\
 &=&x_{i+k}x_{i+2m+k}x_{i+4m+k}\\
 &=& x_{i+k}x_{i+3m+k} = 1.
 \end{eqnarray*}
Therefore
\begin{alignat*}{1}
J' = \twolinepres{x_0,\ldots, x_{6m-1}}{x_ix_{i+m}x_{i+2m} = x_{i+k}x_{i+m+k},}{ x_ix_{i+2m} = x_{i+m}, x_ix_{i+3m} = 1\ (0 \leq i < 6m)}.
\end{alignat*}
Next, $x_{i+2m} = x_{i+m}x_{i+3m} = x_{i+m}x_i^{-1}$ so $x_ix_{i+2m} = x_{i+m} = x_{i+2m}x_i$ and so $x_i$ centralizes $x_{i+2m}$. Thus we have that $x_{i+2m}x_{i+3m} = x_{i+2m}x_i^{-1} = x_i^{-1}x_{i+2m}$ $= x_{i+3m}x_{i+2m}$ and hence after re-indexing $[x_i,x_{i+m}] = 1$ for $0 \leq i < 6m$.

Using $[x_i,x_{i+m}] = 1$, the relations $x_ix_{i+m}x_{i+2m} = x_{i+k}x_{i+m+k}$ are equivalent to $x_ix_{i+2m}x_{i+m} = x_{i+k}x_{i+m+k}$ and thence, using $x_ix_{i+2m} = x_{i+m}$, to $x_{i+m}^2 = x_{i+k}x_{i+m+k}$. Further, $x_ix_{i+2m} = x_{i+m}$ implies $x_ix_{i+2m}x_{i+3m} = x_{i+m}x_{i+3m} = x_{i+2m}$ so the relations $[x_i,x_{i+m}] = 1$ imply $x_ix_{i+3m} = 1$. Therefore
\begin{alignat*}{1}
J' = \twolinepres{x_0,\ldots, x_{6m-1}}{x_i^2 = x_{i+k-m}x_{i+k}, x_ix_{i+2m} = x_{i+m},}{ [x_i,x_{i+m}] = 1\ (0 \leq i < 6m)}.
\end{alignat*}
A further family of relations arises from the fact that
\begin{alignat*}{1}
  x_{i+m-2k}^4 = (x_{i+m-2k}^2)^2 &= (x_{i-k}x_{i+m-k})^2 = x_{i-k}^2x_{i+m-k}^2 \\
  &\qquad = (x_{i-m}x_i)(x_ix_{i+m}) = x_i^2x_{i-m}x_{i+m} =   x_i^3.
\end{alignat*}
We have now reached our first goal, which is the presentation~(\ref{eq:J6'pres}) for the derived subgroup.
Let $\delta = \gcd(m-2k,6m)$. Using Lemma~\ref{lemma:cyclicBase}, the relations $x_i^3=x_{i+m-2k}^4$ imply that if $i \equiv j \mod \delta$, then $x_i$ and $x_j$ each generate the same cyclic subgroup of finite order in $J'$. In particular, each of $x_i$ and $x_j$ is expressible as a power of the other.

If $m \not \equiv 2k$~mod~$6$ then $\delta =1,2,3$ or $4$. If $\delta=1$ then each generator can be expressed as a power of $x_0$ so $J'$ is cyclic.
Suppose $\delta=2$. Then $m$ is even and $k,m-k$ are odd. Each $x_i$ ($i$ even) can be expressed as power of $x_0$ and each
$x_i$ ($i$ odd) can be expressed as power of $x_1$. We have the relation $x_1^2=x_{1+k-m}x_{1+k}$ and since $x_{1+k-m},x_{1+k}$ are powers of $x_0$,
so is $x_1^2$. The relation $x_1^3=x_{1+m-2k}^4$ gives that $x_1^3$ is a power of $x_1^2$ and so it is a power of $x_0$. Thus $x_1=x_1^3x_1^{-2}$ is a power of $x_0$ and hence $J'$ is cyclic.

Suppose $\delta =4$. Then $m\equiv 2$~mod~$4$ and $k$ is odd.
Each $x_i$ ($i\equiv 0$~mod~$4$) is a power of $x_0$, each $x_i$ ($i\equiv 1$~mod~$4$) is a power of $x_1$, each $x_i$ ($i\equiv 2$~mod~$4$) is a power of $x_2$ and each $x_i$ ($i\equiv 3$~mod~$4$) is a power of $x_3$. We have the relation $x_{2-m}x_{2+m}=x_2$, and since $x_{2-m},x_{2+m}$ are powers of $x_0$, so is $x_2$. Similarly $x_3$ is a power of $x_1$. We have the relation $x_{1+k-m}x_{1+k}=x_1^2$, and since $x_{1+k-m},x_{1+k}$ are powers of $x_0$, so is $x_1^2$. We have the relation $x_1^3=x_{1+m-k}^4$, and since $x_{1+m-k}$ is a power of $x_1$ we have that $x_1^3$ is a power of $x_1^2$
and so it is a power of $x_0$. Thus $x_1=x_1^3x_1^{-2}$ is a power of $x_0$ and hence $J'$ is cyclic.

Suppose then that $\delta=3$. Then since $(m,k)=1$ we have that $3$ does not divide $m$, so $m\equiv \epsilon$~mod~$3$ ($\epsilon =\pm 1$),
and hence $k\equiv 2\epsilon$~mod~$3$, $m-k\equiv 2\epsilon$~mod~$3$. Each $x_i$ ($i\equiv 0$~mod~$3$) is a power of $x_0$,
each $x_i$ ($i\equiv \epsilon$~mod~$3$) is a power of $x_\epsilon$, each $x_i$ ($i\equiv 2\epsilon$~mod~$3$) is a power of $x_{2\epsilon}$.
The relations $x_0x_m=x_{m-k}^2$ and $x_0x_{2m}=x_m$ imply $x_0^2x_{2m}=x_{m-k}^2$ so $x_0^2=x_{m-k}^2x_{2m}^{-1}$, and since $x_{m-k},x_{2m}$ are powers of $x_{2\epsilon}$, so is $x_0^2$. We have $x_0^3=x_{m-2k}^4$ and since $x_{m-2k}$ is a power of $x_0$ we have that $x_0^3$ is a power of $x_0^2$, and hence it is a power of $x_{2\epsilon}$. Thus $x_0=x_0^3x_0^{-2}$ is a power of $x_{2\epsilon}$. We have the relation $x_{\epsilon-m}x_\epsilon=x_{\epsilon-k}^2$ so $x_\epsilon=x_{\epsilon-m}^{-1}x_{\epsilon-k}^2$, and since $x_{\epsilon-m},x_{\epsilon-k}$ are powers of $x_{2\epsilon}$ we have that $x_\epsilon$ is a power of $x_{2\epsilon}$. Thus $x_0,x_\epsilon$ are powers of $x_{2\epsilon}$ and hence $J'$ is cyclic.

Assume then that $m-2k\equiv 0$~mod~$6$ and let $A$ be the subgroup of $J'$ generated by $x_0$ and $x_m$. The relations $x_ix_{i+2m}=x_{i+m}$ and $[x_i,x_{i+m}] = 1$ imply that $A$ is abelian and contains the elements $x_0, x_m, x_{2m}, x_{3m}, x_{4m}$, and $x_{5m}$. It suffices to show that $J' = A$.

We now have that $\delta=6$ or $12$ and gcd$(m,\delta)=2$. Thus $m\Z + \delta \Z = 2\Z$.
This means that each even number $i$ is congruent to a multiple of $m$ modulo $\delta$ and so as before, if $i$ is even then $x_i$ is in the subgroup $A$ generated by $x_0$ and $x_m$. Now $m$ is even and $k$ is odd, so the relations $x_i^2 = x_{i+k-m}x_{i+k}$ show that if $i$ is odd, then $x_i^2$ is a product of generators with even index, and so $x_i^2 \in A$. At this point we also know that the element $x_{i+m-2k}^2$ lies in $A$. Thus, we have
\[x_i^3 = x_{i+m-2k}^4 = (x_{i+m-2k}^2)^2 \in A\]
and so both $x_i^2$ and $x_i^3$, and hence $x_i$, are in the subgroup $A$, as required.
\end{proof}

Since $J_6(m,k)'$ is generated by $x_0,x_m$, setting $y_i=x_{im}$ ($0\leq i <6$), we have that it is also generated by the $y_i$ and that the relations $y_iy_{i+2}=y_{i+1}$ hold (for $0\leq i<6$). Therefore $J_6(m,k)'$ is a quotient of the \em Sieradski group \em $S(2,6)=G_6(y_0y_2y_1^{-1})$, introduced in~\cite{Sieradski86}, and which is an infinite metabelian group that abelianizes to $\Z \oplus \Z$ (\cite{Thomas91Kim}). In the case $m-2k\equiv 0$~mod~$6$, we will show (in Theorem~\ref{thm:(m-2k)=0mod6}) that $J_6(m,k)'$ is the direct product of two cyclic groups of equal order.

We can now prove Theorem~\ref{mainthm:structure}(d).

\begin{proof}[Proof of Theorem~\ref{mainthm:structure}(d)] Let $d = \gcd(m,k) > 1$. By Corollary~\ref{cor:freeproduct}, $J'$ is the free product of $d$ copies of $J_n(m/d,k/d)'$
which by Theorems~\ref{thm:J'abfinite4},\ref{thm:J'abfinite6},\ref{thm:J4'finite}, and~\ref{thm:J6'finite} is a non-trivial, finite, abelian group $A$.
The derived subgroup $J''$ is therefore a finitely generated free group. By~\cite[Theorem~1]{AnshelPrener72} the rank of $J''$ is equal to $1-d|A|^{d-1}+d|A|^d-|A|^d$. (This can also be proved directly using a standard argument using covering spaces and Euler characteristics.) Since $d>1$ and $|A| > 1$, to show that $J''$ is non-abelian it is enough to show that $|A| \neq 2$. This follows from the fact that $J_n(m/d,k/d)$ is non-abelian: if $A = J_n(m/d,k/d)'$ were cyclic of order two, then by Lemmas~\ref{lem:conjugacyrelation4} and \ref{lem:conjugacyrelation6}, $J_n(m/k,k/d) = A \rtimes \Z_{nm/d}$ would be abelian because the cyclic group of order two has no non-identity automorphism.
\end{proof}


\section{Cyclically presented groups and the order of $(J_n(m,k)')^\mathrm{ab}$}\label{sec:orderabelianization}

The presentations~(\ref{eq:J4'bicyclic}),(\ref{eq:J6'bicyclic}) suggest generalizing the concept of cyclically presented groups, defined at~(\ref{eq:cycpresdef}), to \em bicyclically presented groups $G_r(w,v)$ \em defined by the presentation
\[\pres{x_0,\ldots ,x_{r-1}}{w,\theta(w),\ldots ,\theta^{r-1}(w), v,\theta(v),\ldots ,\theta^{r-1}(v)}.\]
For $n\geq 2$ and even set
\begin{alignat*}{1}
w_n(m,k)&= \left( \prod_{\alpha=0}^{n/2-1} x_{\alpha m} \right)\left( \prod_{\alpha=0}^{n/2-2} x_{k+\alpha m} \right)^{-1}, \\ v_n(m)&=\prod_{\alpha=0}^{n-1}x_{\alpha m},\\
u_n(m)&=x_0x_{nm/2}.
\end{alignat*}
Then for $n=4$ or $6$ the presentations~(\ref{eq:J4'bicyclic}),(\ref{eq:J6'bicyclic}) give
\begin{alignat*}{1}
J_n(m,k)'=G_{nm}(w_n(m,k),v_n(m)).
\end{alignat*}
\begin{lemma}\label{lem:bicyclicutov}
For arbitrary integers $n,m,k$ where $m>0$ and $n\geq 2$ is even we have
\begin{alignat}{1}
G_{nm}(w_n(m,k),v_n(m))^\mathrm{ab}\cong G_{nm}(w_n(m,k),u_n(m))^\mathrm{ab}.\label{eq:newJnmkderivedpres}
\end{alignat}
\end{lemma}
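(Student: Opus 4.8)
The plan is to pass to the polynomial (module) model for abelianizations of bicyclically presented groups, exactly as the representer-polynomial formalism of the introduction suggests. Extending the standard identification for cyclically presented groups, the abelianization of $G_{nm}(w,v)$ is the cokernel of the two stacked circulant relation matrices, which is precisely the quotient ring $\Z[x]/(x^{nm}-1,\,f_w,\,f_v)$, where $f_w,f_v$ are the representer polynomials and the generator $x_i$ is identified with $x^i$. Thus it suffices to prove the equality of ideals $(x^{nm}-1,\,f_{w_n(m,k)},\,f_{v_n(m)}) = (x^{nm}-1,\,f_{w_n(m,k)},\,f_{u_n(m)})$ in $\Z[x]$. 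This is in fact stronger than the asserted isomorphism, since it realizes the two abelianizations as one and the same quotient ring.

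Writing $N = n/2$ and $g_j(x) = \sum_{\alpha=0}^{j-1} x^{\alpha m}$, the representer polynomials are $f_{w_n(m,k)} = g_N - x^k g_{N-1}$, $f_{u_n(m)} = 1 + x^{Nm}$, and $f_{v_n(m)} = \sum_{\alpha=0}^{n-1} x^{\alpha m}$. The identity that drives everything is the factorization $f_{v_n(m)} = (1 + x^{Nm})\,g_N = f_{u_n(m)}\,g_N$, which I would verify by the telescoping $(1+x^{Nm})(1 + x^m + \cdots + x^{(N-1)m}) = 1 + x^m + \cdots + x^{(2N-1)m}$. One inclusion is then immediate: since $f_{v_n(m)}$ is a multiple of $f_{u_n(m)}$, it lies in the right-hand ideal, giving $(x^{nm}-1,\,f_{w_n(m,k)},\,f_{v_n(m)}) \subseteq (x^{nm}-1,\,f_{w_n(m,k)},\,f_{u_n(m)})$ and hence the natural surjection of abelianizations.

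The content is the reverse inclusion, namely that $f_{u_n(m)}$ vanishes in $R := \Z[x]/(x^{nm}-1,\,f_{w_n(m,k)},\,f_{v_n(m)})$. Here I would work inside $R$, where $x$ is a unit because $x\cdot x^{nm-1} = x^{nm} = 1$. From $f_{w_n(m,k)} = 0$ I obtain $g_N = x^k g_{N-1}$, and substituting into $f_{v_n(m)} = f_{u_n(m)}\,g_N = 0$ yields $x^k\,f_{u_n(m)}\,g_{N-1} = 0$; cancelling the unit $x^k$ gives $f_{u_n(m)}\,g_{N-1} = 0$. Since $g_N - g_{N-1}$ telescopes to the single monomial $x^{(N-1)m}$, subtracting the two relations $f_{u_n(m)}\,g_N = 0$ and $f_{u_n(m)}\,g_{N-1} = 0$ gives $f_{u_n(m)}\,x^{(N-1)m} = 0$, and cancelling the unit $x^{(N-1)m}$ forces $f_{u_n(m)} = 0$ in $R$.

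The only step that requires any cleverness is this last one: recognizing that $g_N - g_{N-1}$ is a unit monomial, so that $f_{u_n(m)}$ can be freed from the product $f_{u_n(m)}\,g_N$ once the $w$-relation is used to rewrite $g_N$ as $x^k g_{N-1}$. Everything else is bookkeeping. Combining the two inclusions gives the equality of ideals, and hence the isomorphism (indeed equality) of abelianizations. The degenerate case $N=1$, i.e.\ $n=2$, is harmless: there $g_{N-1}=0$ and $f_{w_n(m,k)}=1$, so both quotient rings collapse to the trivial group and the conclusion holds vacuously.
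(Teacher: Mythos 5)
Your proposal is correct. It reaches the same conclusion by a route that is formally different from the paper's but computationally parallel to it. The paper works directly with the relators of the abelianized presentation: it combines $R_i$, $S_i$, and $R_{i+nm/2}$ to deduce the relation $x_i x_{i+nm/2}=1$, and then observes that $S_i$ factors as a product of such pairs, so that the $v$-relators and the $u$-relators can be exchanged. Your argument is the exact translation of this into the module $\Z[x]/(x^{nm}-1)$: your identity $f_{v}=f_{u}\,g_N$ is the paper's observation that $\prod_{\alpha=0}^{n-1}x_{i+\alpha m}=\prod_{\alpha=0}^{n/2-1}x_{i+\alpha m}x_{i+(\alpha+n/2)m}$; your step $f_u g_{N-1}=0$ corresponds to the paper's chain of three substitutions; and your final cancellation of the unit monomial $x^{(N-1)m}$ is the paper's cancellation of the two inverse subwords inside $S_i$ to isolate $x_{i+(n/2-1)m}x_{i+(n-1)m}$. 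What your formalism buys is conceptual economy and a slightly stronger statement: you establish equality of the ideals $(x^{nm}-1,f_w,f_v)$ and $(x^{nm}-1,f_w,f_u)$, so the two abelianizations are literally the same quotient module rather than merely isomorphic, and the argument sits naturally alongside the resultant computations of Theorem 4.2, which use the same model. The one point you should make explicit if writing this up is the (standard, but worth a sentence) identification of $G_{nm}(w,v)^{\mathrm{ab}}$ with $\Z[x]/(x^{nm}-1,f_w,f_v)$ via the shift action; your handling of the degenerate case $n=2$ and of invertibility of $x$ modulo $x^{nm}-1$ is correct.
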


\begin{proof}
Let $w=w_n(m,k), v=v_n(m)$,
\[R_i=\left( \prod_{\alpha=0}^{n/2-1} x_{i+\alpha m} \right)\left( \prod_{\alpha=0}^{n/2-2} x_{i+k+\alpha m}\right)^{-1}, \ S_i=\prod_{\alpha=0}^{n-1}x_{i+\alpha m}\]
(with subscripts taken as integers). Then
\begin{alignat*}{1}
G_{nm}(w,v)^\mathrm{ab}&=\pres{x_0,\ldots , x_{nm-1}}{R_i,S_i\ (0\leq i<nm)}^\mathrm{ab}
\end{alignat*}
(where the subscripts of generators appearing in $R_i,S_i$ are taken mod~$nm$). Now
\begin{alignat*}{1}
\prod_{\alpha=0}^{n/2-2} x_{i+k+\alpha m}
&= \prod_{\alpha=0}^{n/2-1} x_{i+\alpha m}\quad\mathrm{using~relator}~R_i\\
&= \left(\prod_{\alpha=n/2}^{n-1} x_{i+\alpha m}\right)^{-1}\quad\mathrm{using~relator}~S_i\\
&= \left(\prod_{\alpha=n/2}^{n-2} x_{i+k+\alpha m}\right)^{-1}\quad\mathrm{using~relator}~{R_{i+nm/2}}
\end{alignat*}
i.e.
\[\prod_{\alpha=0}^{n/2-2} x_{i+\alpha m}
=\left(\prod_{\alpha=n/2}^{n-2} x_{i+\alpha m}\right)^{-1}.\]
Therefore
\begin{alignat*}{1}
1
&= \prod_{\alpha=0}^{n-1} x_{i+\alpha m}\quad\mathrm{by}~S_i\\
&= \left(\prod_{\alpha=0}^{n/2-2} x_{i+\alpha m}\right) \left( x_{i+(n/2-1)m} \right) \left( \prod_{\alpha=n/2}^{n-2} x_{i+\alpha m} \right) \left( x_{i+(n-1)m} \right) \\
&= \left( \prod_{\alpha=n/2}^{n-2} x_{i+\alpha m} \right)^{-1} \left( x_{i+(n/2-1)m} \right) \left( \prod_{\alpha=n/2}^{n-2} x_{i+\alpha m} \right) \left( x_{i+(n-1)m} \right) \\
&= \left( x_{i+(n/2-1)m} \right) \left( x_{i+(n-1)m} \right)
\end{alignat*}
i.e. $x_{i+(n/2-1)m}x_{i+(n-1)m}=1$ or equivalently $x_{i}x_{i+nm/2}=1$. Thus
\begin{alignat*}{1}
G_{nm}(w,v)^\mathrm{ab}
&=\pres{x_0,\ldots , x_{nm-1}}{R_i,S_i,x_ix_{i+nm/2}\ (0\leq i<nm)}^\mathrm{ab}\\
&=\pres{x_0,\ldots , x_{nm-1}}{R_i,\prod_{\alpha=0}^{n-1}x_{i+\alpha m},x_ix_{i+nm/2}\ (0\leq i<nm)}^\mathrm{ab}\\
&=\twolinepres{x_0,\ldots , x_{nm-1}}{R_i,\prod_{\alpha=0}^{n/2-1}x_{i+\alpha m}x_{i+(\alpha+n/2)m},}{x_ix_{i+nm/2}\ (0\leq i<nm)}^\mathrm{ab}\\
&=\pres{x_0,\ldots , x_{nm-1}}{R_i,x_ix_{i+nm/2}\ (0\leq i<nm)}^\mathrm{ab}\\
&=G_{nm}(w,u_n(m))^\mathrm{ab}
\end{alignat*}
as required.
\end{proof}

We now extend the formula~(\ref{eq:representer}) for the order of the abelianization of a cyclically presented group to that of a bicyclically presented group of the form $G_{nm}(w,u_n(m))$ for an arbitrary word $w$.

\begin{theorem}\label{thm:newbicyclicabelianization}
Let $m,n\geq 1$, where $n$ is even, and let $F_{nm}$ denote the free group on generators $x_0,\ldots ,x_{nm-1}$. If $w$ is any word in $F_{nm}$ and $u=u_n(m)=x_0x_{nm/2}$ then $G_{nm}(w,u)^\mathrm{ab}$ is finite if and only if $\mathrm{Res}(f_w(x),f_u(x))\neq 0$ in which case
\[|G_{nm}(w,u)|^\mathrm{ab}=|\mathrm{Res}(f_w(x),f_u(x))|=\prod_{\lambda^{nm/2}=-1}|f_w(\lambda)|.\]
\end{theorem}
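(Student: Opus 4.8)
The plan is to interpret the abelianization as a quotient of the group ring $\Z[\Z_{nm}]\cong\Z[x]/(x^{nm}-1)$ and then exploit the fact that the representer polynomial of $u$ divides $x^{nm}-1$, collapsing the two families of circulant relations into a single quotient modulo a monic polynomial. Writing $R=\Z[x]/(x^{nm}-1)$ and sending the abelianized generator $x_j$ to $x^j\in R$, the relation $\theta^i(w)$ becomes $x^if_w(x)$ and $\theta^i(u)$ becomes $x^if_u(x)$, so the subgroup of $R$ generated by all of these is precisely the ideal $(f_w,f_u)$. Hence
$$G_{nm}(w,u)^\mathrm{ab}\cong \Z[x]/(x^{nm}-1,\,f_w(x),\,f_u(x)).$$
This is the routine ``representer polynomial'' bookkeeping behind~(\ref{eq:representer}), now carried out for two relator families simultaneously.

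First I would simplify the ideal. Since $u=u_n(m)=x_0x_{nm/2}$ has representer polynomial $f_u(x)=1+x^{nm/2}$, and since $x^{nm}-1=(x^{nm/2}-1)(x^{nm/2}+1)$, the polynomial $x^{nm}-1$ is a $\Z[x]$-multiple of $f_u$. Therefore $(x^{nm}-1,f_u)=(f_u)$ in $\Z[x]$, so the modulus $x^{nm}-1$ is redundant and
$$G_{nm}(w,u)^\mathrm{ab}\cong \Z[x]/(x^{nm/2}+1,\,f_w(x)).$$
The crucial structural feature is that $g(x):=x^{nm/2}+1$ is monic of degree $nm/2$, so that $A:=\Z[x]/(g)$ is free of rank $nm/2$ as an abelian group.

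Next I would identify the quotient as a cokernel. Multiplication by $f_w$ is a $\Z$-linear endomorphism $\mu$ of $A\cong\Z^{nm/2}$, and $\Z[x]/(g,f_w)=A/f_wA=\mathrm{coker}(\mu)$. By Smith normal form, $\mathrm{coker}(\mu)$ is finite if and only if $\mu$ is injective, i.e. $\det\mu\neq 0$, in which case $|\mathrm{coker}(\mu)|=|\det\mu|$. Since $g$ has distinct roots, $A\otimes\mathbb{C}\cong\prod_{g(\lambda)=0}\mathbb{C}$ on which $\mu$ acts by $f_w(\lambda)$, whence $\det\mu=\prod_{g(\lambda)=0}f_w(\lambda)=\pm\,\mathrm{Res}(f_w,g)$ (the sign coming from the degree convention is immaterial in absolute value). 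Thus $G_{nm}(w,u)^\mathrm{ab}$ is finite if and only if $\mathrm{Res}(f_w,f_u)=\mathrm{Res}(f_w,g)\neq 0$, with order $|\mathrm{Res}(f_w,f_u)|$. Finally, because $g$ is monic with root set exactly $\{\lambda:\lambda^{nm/2}=-1\}$, formula~(\ref{eq:resultant}) rewrites this as $\prod_{\lambda^{nm/2}=-1}|f_w(\lambda)|$, completing the proof.

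The main obstacle is not any single hard estimate but getting the bookkeeping exactly right: one must verify that the two circulant relation families generate precisely the ideal $(f_w,f_u)$ (neither more nor less), and one must be careful that the divisibility reduction is performed in $\Z[x]$ rather than merely in $R$, so that the monic generator $g$ genuinely replaces both $x^{nm}-1$ and $f_u$. Once the problem is reduced to $\Z[x]/(g,f_w)$ with $g$ monic, the determinant-of-multiplication argument and the resultant identity are entirely standard (cf.\ the references for~(\ref{eq:resultant})), and both the finiteness criterion and the precise order emerge at once.
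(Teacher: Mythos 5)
Your proposal is correct, and it takes a genuinely different route from the paper's proof. The paper works entirely at the level of presentations: it uses the relations $x_ix_{i+nm/2}=1$ to replace $w$ by a word $\tilde{w}$ supported on the indices $0,\ldots,nm/2-1$ with exponents $b_i=a_i-a_{i+nm/2}$, verifies by a direct root-of-unity computation that $\mathrm{Res}(f_w,f_u)=\mathrm{Res}(f_{\tilde{w}},f_u)$, disposes of the degenerate case $f_{\tilde{w}}=0$ separately, and then eliminates half of the generators to reach a square presentation whose relation matrix is literally the Sylvester matrix $Syl(f_{\tilde{w}},f_u)$, so that its determinant is the resultant. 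You instead pass to the group ring, identify the abelianization with $\Z[x]/(x^{nm}-1,f_w,f_u)$, observe that $f_u=1+x^{nm/2}$ is a monic divisor of $x^{nm}-1$ so the ideal collapses to $(f_u,f_w)$, and compute the cokernel of multiplication by $f_w$ on the free $\Z$-module $\Z[x]/(x^{nm/2}+1)$ via Smith normal form and diagonalization over $\mathbb{C}$. The key bookkeeping steps you flag are indeed the ones that need care, and both check out: the $\Z$-span of $\{x^if\}$ is the ideal $(f)$ because $x^{nm}=1$ in $R$, and the reduction $(x^{nm}-1,f_u)=(f_u)$ holds already in $\Z[x]$. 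Your argument is shorter, handles the degenerate case $f_w\equiv 0 \bmod f_u$ uniformly rather than as a separate branch, and isolates the structural reason the theorem works (namely that $f_u$ is a monic divisor of $x^{nm}-1$, so the quotient ring is $\Z$-free of rank $\deg f_u$); the paper's argument, while longer, stays elementary, produces the explicit presentation~(\ref{eq:sylvPres}) along the way, and realizes the Sylvester matrix concretely as a relation matrix. Both ultimately invoke the same resultant identity~(\ref{eq:resultant}).
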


\begin{proof}
For each $0\leq i<nm$ let $a_i$ denote the exponent sum of generator $x_i$ in the word $w$. Then (with subscripts mod~$n$) we have
\begin{alignat*}{1}
G_{nm}(w,u)^{\mathrm{ab}}
&=\pres{x_0,\ldots, x_{nm-1}}{w(x_i,\ldots, x_{i+n-1}), x_ix_{i+nm/2}\ (0\leq i<nm)}^\mathrm{ab}\\
&=\pres{x_0,\ldots, x_{nm-1}}{x_i^{a_0}\ldots x_{i+nm-1}^{a_{nm-1}}, x_ix_{i+nm/2}\ (0\leq i<nm)}^\mathrm{ab}\\
&=\twolinepres{x_0,\ldots, x_{nm-1}}{(x_i^{a_0}\ldots x_{i+nm/2-1}^{a_{nm/2-1}})(x_{i+nm/2}^{a_{nm/2}}\ldots x_{i+nm-1}^{a_{nm-1}}), } {x_ix_{i+nm/2}\ (0\leq i<nm)}^\mathrm{ab}\\
&=\twolinepres{x_0,\ldots, x_{nm-1}}{(x_i^{a_0}\ldots x_{i+nm/2-1}^{a_{nm/2-1}})(x_{i}^{-a_{nm/2}}\ldots x_{i+nm-1}^{-a_{nm-1}}), } {x_ix_{i+nm/2}\ (0\leq i<nm)}^\mathrm{ab}\\
&=\twolinepres{x_0,\ldots, x_{nm-1}}{x_i^{a_0-a_{nm/2}}\ldots x_{i+nm/2-1}^{a_{nm/2-1}-a_{nm-1}},}{x_ix_{i+nm/2}\ (0\leq i<nm)}^\mathrm{ab}\\
&=\pres{x_0,\ldots, x_{nm-1}}{x_i^{b_0}\ldots x_{i+nm/2-1}^{b_{nm/2-1}}, x_ix_{i+nm/2}\ (0\leq i<nm)}^\mathrm{ab}
\end{alignat*}
where $b_i=a_i-a_{i+nm/2}$ for each $0\leq i <nm/2$. 

Let $\tilde{w}=\tilde{w}(x_0,\ldots ,x_{nm-1})=x_0^{b_0}\ldots x_{nm/2-1}^{b_{nm/2-1}}$. Then $f_w(x)=\sum_{i=0}^{nm-1} a_ix^i$, $f_{\tilde{w}}(x)=\sum_{i=0}^{nm/2-1} b_ix^i$ so up to sign, by~(\ref{eq:resultant}), we have
\begin{alignat*}{1}
\mathrm{Res}(f_w,f_u)
&=\mathrm{Res}\left( \sum_{i=0}^{nm-1} a_ix^i, 1+x^{nm/2} \right)\\
&=\prod_{\lambda^{mn/2}=-1}\left( \sum_{i=0}^{nm-1} a_i\lambda^i \right)\\
&=\prod_{\lambda^{mn/2}=-1}\left( \sum_{i=0}^{nm/2-1} a_i\lambda^i + \sum_{j=0}^{nm/2-1} a_{j+nm/2}\lambda^{j+nm/2}\right)\displaybreak[1]\\
&=\prod_{\lambda^{mn/2}=-1}\left( \sum_{i=0}^{nm/2-1} a_i\lambda^i + \lambda^{nm/2}\sum_{j=0}^{nm/2-1} a_{j+nm/2}\lambda^{j}\right)\displaybreak[1]\\
&=\prod_{\lambda^{mn/2}=-1}\left( \sum_{i=0}^{nm/2-1} a_i\lambda^i + (-1)\sum_{i=0}^{nm/2-1} a_{i+nm/2}\lambda^{i}\right)\displaybreak[1]\\
&=\prod_{\lambda^{mn/2}=-1}\left( \sum_{i=0}^{nm/2-1} (a_i-a_{i+nm/2})\lambda^i\right)\displaybreak[1]\\
&=\mathrm{Res}\left( \sum_{i=0}^{nm/2-1} (a_i-a_{i+nm/2})x^i, 1+x^{nm/2}\right)\displaybreak[1]\\
&=\mathrm{Res}\left( \sum_{i=0}^{nm/2-1} b_ix^i, 1+x^{nm/2}\right)\\
&=\mathrm{Res}(f_{\tilde{w}},f_u).
\end{alignat*}
If $b_i=0$ for all $0\leq i<nm/2$ then $G_{nm}(w,u)^\mathrm{ab}\cong \Z^{nm/2}$ and $\mathrm{Res}(f_w,f_u)=\mathrm{Res}(f_{\tilde{w}},f_u)=\mathrm{Res}(0,f_u)=0$ so the theorem holds in this case. Suppose then that $b_j\neq 0$ for some $0\leq j<nm/2$. By subtracting $j$ from the subscripts of the generators $x_i$ (mod~$nm$) we may assume that $b_0\neq 0$. Let $d=\mathrm{max}\{ i\ |\ b_i\neq 0\}$ then $f_{\tilde{w}}(x)=\sum_{i=0}^{nm/2-1} b_ix^i=\sum_{i=0}^{d} b_ix^i$ is a polynomial of degree~$d<nm/2$ and $\tilde{w}=\tilde{w}(x_0,\ldots ,x_{d})=x_0^{b_0}\ldots x_{d}^{b_{d}}$. Then (with subscripts mod~$nm$) we have
\begin{alignat}{1}
G_{nm}(\tilde{w},u)^\mathrm{ab}&=\pres{x_0,\ldots , x_{nm-1}}{x_i^{b_0}\ldots x_{i+d}^{b_{d}},x_ix_{i+nm/2}\ (0\leq i<nm)}^\mathrm{ab}\nonumber\\
&=\threelinepres{x_0,\ldots , x_{nm-1}}{x_i^{b_0}\ldots x_{i+d}^{b_{d}}\ (0\leq i<nm/2),}{x_\beta^{b_0}\ldots x_{\beta+d}^{b_{d}}\ (nm/2\leq \beta<nm),} {x_j=x_{j+nm/2}^{-1}\ (0\leq j<nm/2)}^\mathrm{ab}\nonumber\displaybreak[1]\\
&=\threelinepres{x_0,\ldots , x_{nm-1}}{x_i^{b_0}\ldots x_{i+d}^{b_{d}}\ (0\leq i<nm/2),}{x_{\beta-nm/2}^{-b_0}\ldots x_{\beta+d-nm/2}^{-b_{d}}\,(nm/2\leq \beta<nm),} {x_j=x_{j+nm/2}^{-1}\ (0\leq j<nm/2)}^\mathrm{ab}\nonumber\displaybreak[1]\\
&=\threelinepres{x_0,\ldots , x_{nm-1}}{x_i^{b_0}\ldots x_{i+d}^{b_{d}}\ (0\leq i<nm/2),}{x_{i}^{-b_0}\ldots x_{i+d}^{-b_{d}}\ (0\leq i<nm/2),} {x_j=x_{j+nm/2}^{-1}\ (0\leq j<nm/2)}^\mathrm{ab}\nonumber\\
&=\twolinepres{x_0,\ldots , x_{nm-1}}{x_i^{b_0}\ldots x_{i+d}^{b_{d}}\ (0\leq i<nm/2)} {x_j=x_{j+nm/2}^{-1}\ (0\leq j<nm/2)}^\mathrm{ab}.\label{eq:nmgennmrelN}
\end{alignat}
Since all subscripts are less than $nm$ we can now take them as integers (rather than reduced modulo some base), and we shall do so throughout the rest of the proof. Using the relations $x_d=x_{d+nm/2}^{-1},\ldots , x_{nm/2-1}=x_{nm-1}^{-1}$ we may eliminate generators $x_{d+nm/2},\ldots ,x_{nm-1}$ to get the $(d+nm/2)$-generator, $(d+nm/2)$-relator presentation
\begin{alignat}{1}
G_{nm}(\tilde{w},u)^\mathrm{ab}&=\twolinepres{x_0,\ldots , x_{d+nm/2-1}}{x_i^{b_0}\ldots x_{i+d}^{b_{d}}\ (0\leq i<nm/2),}{x_j=x_{j+nm/2}^{-1}\ (0\leq j<d)}^\mathrm{ab}.\label{eq:sylvPres}
\end{alignat}
Since we have $f_{\tilde{w}}(x)=\sum_{i=0}^d b_ix^i$ is of degree~$d$ and  $f_u(x)=1+x^{nm/2}$ is of degree $nm/2$ we have that $\mathrm{deg}(f_{\tilde{w}})+\mathrm{deg}(f_u)=d+nm/2$. Then we see that the relation matrix of the presentation~(\ref{eq:sylvPres}) is the $(d+nm/2)\times (d+nm/2)$ Sylvester matrix $S=Syl(f_{\tilde{w}},f_u)$ so $\mathrm{det}(S)=\mathrm{Res}(f_{{w}},f_u)$ and the result follows.
\end{proof}

Using Lemma~\ref{lem:bicyclicutov} and Theorem~\ref{thm:newbicyclicabelianization}, in order to calculate $|(J_n(m,k)')^\mathrm{ab}|$ as in Theorem~\ref{mainthm:orders} we must calculate the resultants $\mathrm{Res}(f_w(x),1+x^{nm/2})$ where $w=w_n(m,k)$, which we now do in Theorem~\ref{thm:requiredres}. For use in Corollary~\ref{cor:Fibtypeabelianisations} we also calculate the resultants $\mathrm{Res}(f_w(x),1-x^{nm/2})$. We shall require the following well known result on resultants (see for example~\cite[Lemma~2.1]{Odoni}).

\begin{proposition}\label{prop:resultant}
If $(n,k)=1$ then $\mathrm{Res}(\alpha x^k-\beta,x^n-1)=\beta^n-\alpha^n$.
\end{proposition}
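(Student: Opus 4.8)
The plan is to evaluate the resultant as a product over the roots of the monic polynomial $x^n-1$, which are precisely the $n$th roots of unity, and then to use the hypothesis $(n,k)=1$ to reindex that product. Since $x^n-1$ is monic, the product formula~(\ref{eq:resultant}) applies with $g(x)=x^n-1$, giving
\[\mathrm{Res}(\alpha x^k - \beta,\, x^n-1) = \prod_{\lambda^n=1}(\alpha\lambda^k - \beta)\]
up to the sign fixed by the resultant convention. Thus the problem reduces to computing the right-hand product in closed form.

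The key step is the observation that, because $(n,k)=1$, raising to the $k$th power permutes the set $\{\lambda : \lambda^n=1\}$ bijectively. Hence the factors $\alpha\lambda^k-\beta$, as $\lambda$ ranges over the $n$th roots of unity, are a rearrangement of the factors $\alpha\mu-\beta$ as $\mu$ does, so that
\[\prod_{\lambda^n=1}(\alpha\lambda^k - \beta) = \prod_{\mu^n=1}(\alpha\mu - \beta).\]
Assuming $\alpha\neq 0$ (the case $\alpha=0$ being immediate), I would then recall that $\prod_{\mu^n=1}(X-\mu)=X^n-1$ as a polynomial identity in $X$, and evaluate it at $X=\beta/\alpha$. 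The cleanest anchor is to run this on $\prod_{\mu^n=1}(\beta-\alpha\mu)$, since $\beta-\alpha\mu=\alpha(\beta/\alpha-\mu)$ gives $\prod_{\mu^n=1}(\beta-\alpha\mu)=\alpha^n\bigl[(\beta/\alpha)^n-1\bigr]=\beta^n-\alpha^n$ on the nose, which is exactly the target quantity.

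The main obstacle is the bookkeeping of signs rather than anything substantive. The resultant differs from the naive root-product by a factor $(-1)^{\deg f\cdot\deg g}$ depending on the convention, and rewriting $\alpha\mu-\beta$ as $-(\beta-\alpha\mu)$ introduces a further $(-1)^n$; these must be reconciled against the Sylvester-matrix definition used in Theorem~\ref{thm:newbicyclicabelianization}, which I would pin down by checking a single small case. In any event, only $|\mathrm{Res}|$ enters the order computations through~(\ref{eq:representer}) and Theorem~\ref{thm:newbicyclicabelianization}, so the formula is applied downstream purely through its absolute value $|\beta^n-\alpha^n|$, and any residual sign ambiguity is harmless for our purposes.
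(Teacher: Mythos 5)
Your proof is correct, and it is worth pointing out that the paper itself supplies no proof of this proposition at all: it is simply quoted from Odoni's Lemma~2.1. So your argument is a self-contained supplement rather than an alternative to anything in the text, and it is the standard one. You put the weight exactly where it belongs: since $x^n-1$ is monic with distinct roots, the product formula~(\ref{eq:resultant}) reduces the resultant (up to sign) to $\prod_{\lambda^n=1}(\alpha\lambda^k-\beta)$; the hypothesis $(n,k)=1$ enters only to make $\lambda\mapsto\lambda^k$ a permutation of the $n$th roots of unity; and then $\prod_{\mu^n=1}(\beta-\alpha\mu)=\alpha^n\bigl((\beta/\alpha)^n-1\bigr)=\beta^n-\alpha^n$, a computation that also disposes of $\alpha=0$. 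Your caution about signs is well placed and in fact slightly understates the issue: with the convention $\mathrm{Res}(f,g)=(-1)^{\deg f\,\deg g}\,\mathrm{lc}(g)^{\deg f}\prod_{g(\lambda)=0}f(\lambda)$, your chain of equalities yields $\mathrm{Res}(\alpha x^k-\beta,x^n-1)=(-1)^{n(k+1)}(\beta^n-\alpha^n)$, so the proposition as literally stated fails by a sign when $n$ is odd and $k$ is even (for instance $\mathrm{Res}(\alpha x^2-\beta,x^3-1)=\alpha^3-\beta^3$). As you note, this is harmless for the paper: in the proof of Theorem~\ref{thm:requiredres} the proposition is invoked either for linear polynomials (the case $k=1$, where the sign is $+1$) or inside a product of two resultants of equal degrees whose sign factors cancel, and in any case only $|\mathrm{Res}|$ enters the order computations.
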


We now give the required resultants.

\begin{theorem}\label{thm:requiredres}
Let
\[f_w(x)=\sum_{i=0}^{n/2-1} x^{im} - \sum_{i=0}^{n/2-2} x^{k+im}\]
and assume $(m,k)=1$.
\begin{itemize}
  \item[(a)] If $n=4$ then
\begin{itemize}
\item[(i)]    $|\mathrm{Res} (f_w(x),x^{nm/2}+1)|= 2^m+1 - 2(\sqrt{2})^m \cos((2k-m)\pi/4)$;
\item[(ii)]   $|\mathrm{Res} (f_w(x),x^{nm/2}-1)|= 2^m-1$.
\end{itemize}

  \item[(b)] If $n=6$ then
\begin{itemize}
  \item[(i)] $|\mathrm{Res} (f_w(x),x^{nm/2}+1)|= 3^m+4^m - 2(2\sqrt{3})^m \cos((m-2k)\pi/6)$;
  \item[(ii)] $|\mathrm{Res} (f_w(x),x^{nm/2}-1)|= 3^m-2^m$.
\end{itemize}
     \end{itemize}
\end{theorem}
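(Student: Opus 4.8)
The plan is to work throughout with the product formula $|\mathrm{Res}(f_w(x),g(x))| = \prod_{g(\lambda)=0}|f_w(\lambda)|$ of~(\ref{eq:resultant}), which applies because both $x^{nm/2}+1$ and $x^{nm/2}-1$ are monic. Write $N = nm/2 = (n/2)m$ and treat both parts uniformly by considering $g(x)=x^N-\epsilon$ with $\epsilon\in\{1,-1\}$, so that part~(ii) is the case $\epsilon=1$ and part~(i) is the case $\epsilon=-1$. A root $\lambda$ of $x^N-\epsilon$ satisfies $(\lambda^m)^{n/2}=\lambda^N=\epsilon$, so the quantity $\zeta=\lambda^m$ runs over the $n/2$ solutions of $\zeta^{n/2}=\epsilon$, and for each such $\zeta$ the fiber $\{\lambda:\lambda^m=\zeta\}$ consists of exactly $m$ roots. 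Since $f_w(x)=\sum_{i=0}^{n/2-1}x^{im}-x^k\sum_{i=0}^{n/2-2}x^{im}$, reducing modulo $x^m-\zeta$ shows that on this fiber $f_w(\lambda)=P(\zeta)-Q(\zeta)\lambda^k$, where $P(\zeta)=\sum_{j=0}^{n/2-1}\zeta^j$ and $Q(\zeta)=\sum_{j=0}^{n/2-2}\zeta^j$. In the two cases at hand, $P=1+\zeta$, $Q=1$ when $n=4$, and $P=1+\zeta+\zeta^2$, $Q=1+\zeta$ when $n=6$.

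The crux is the evaluation of each fiber subproduct. Because $(m,k)=1$, the map $\lambda\mapsto\lambda^k$ is a bijection from $\{\lambda:\lambda^m=\zeta\}$ onto $\{\mu:\mu^m=\zeta^k\}$: it lands there since $(\lambda^k)^m=(\lambda^m)^k=\zeta^k$, and it is injective because $\lambda_1^k=\lambda_2^k$ together with $\lambda_1^m=\lambda_2^m$ forces $\lambda_1/\lambda_2$ to be a root of unity of order dividing both $m$ and $k$, hence trivial. Factoring out $Q(\zeta)$ (when nonzero) and using $\prod_{\mu^m=\zeta^k}(c-\mu)=c^m-\zeta^k$ then yields the uniform identity
\[\prod_{\lambda^m=\zeta} f_w(\lambda)=P(\zeta)^m-Q(\zeta)^m\zeta^k,\]
which remains valid when $Q(\zeta)=0$. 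Multiplying over the $n/2$ admissible values of $\zeta$ reduces each resultant to an explicit finite product,
\[|\mathrm{Res}(f_w(x),x^{nm/2}-\epsilon)|=\prod_{\zeta^{n/2}=\epsilon}\bigl|P(\zeta)^m-Q(\zeta)^m\zeta^k\bigr|.\]

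From here I would substitute the relevant roots. For the $x^{nm/2}-1$ cases, the value $\zeta=1$ supplies the factor $(n/2)^m-(n/2-1)^m$, i.e.\ $2^m-1$ when $n=4$ and $3^m-2^m$ when $n=6$, while each remaining $(n/2)$-th root of unity $\zeta\neq 1$ has $P(\zeta)=0$ and $|Q(\zeta)|=1$, contributing a factor of modulus~$1$; this gives (a)(ii) and (b)(ii) immediately. For the $x^{nm/2}+1$ cases the admissible $\zeta$ (those with $\zeta^{n/2}=-1$) occur in complex-conjugate pairs, together with $\zeta=-1$ when $n=6$, which contributes the factor $P(-1)^m=1$. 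The subproducts over a conjugate pair are themselves complex conjugates, so their product is $|A-B|^2$ with $A=P(\zeta)^m$ and $B=Q(\zeta)^m\zeta^k$ evaluated at the upper-half-plane representative. Writing $1+i=\sqrt2\,e^{i\pi/4}$ for $n=4$ and $1+\eta=\sqrt3\,e^{i\pi/6}$ with $\eta=e^{i\pi/3}$ for $n=6$, expanding $|A-B|^2=|A|^2+|B|^2-2\,\mathrm{Re}(A\overline{B})$ produces exactly $2^m+1-2(\sqrt2)^m\cos((2k-m)\pi/4)$ and $3^m+4^m-2(2\sqrt3)^m\cos((m-2k)\pi/6)$, with evenness of cosine absorbing the sign of $m-2k$.

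I expect the genuinely substantive step to be the fiber identity $\prod_{\lambda^m=\zeta}f_w(\lambda)=P(\zeta)^m-Q(\zeta)^m\zeta^k$, since this is where the hypothesis $(m,k)=1$ enters (through the bijection $\lambda\mapsto\lambda^k$) and it is what collapses the $nm$-fold product into a tractable $(n/2)$-fold one. This is the same coprimality mechanism behind Proposition~\ref{prop:resultant}, which could instead be invoked after the multiplicative splitting $x^{nm/2}-\epsilon=\prod_{\zeta^{n/2}=\epsilon}(x^m-\zeta)$. The remaining difficulty is purely bookkeeping: pairing conjugate values of $\zeta$ correctly, confirming that the modulus-one factors drop out, and checking that each final product is the real, positive quantity claimed.
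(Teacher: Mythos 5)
Your argument is correct and follows essentially the same route as the paper: both partition the roots of $x^{nm/2}\mp 1$ into the fibers of $\lambda\mapsto\lambda^m$ and evaluate each fiber product using the coprimality of $m$ and $k$ (your bijection $\lambda\mapsto\lambda^k$ is exactly the mechanism behind Proposition~\ref{prop:resultant}, which the paper invokes for the same purpose). Your uniform fiber identity $\prod_{\lambda^m=\zeta}f_w(\lambda)=P(\zeta)^m-Q(\zeta)^m\zeta^k$ does let you evaluate the two non-real fibers separately and then pair conjugates, which neatly shortcuts the paper's much longer trigonometric computation of $P_1P_{-1}$ in case (b)(i), but the decomposition and the key lemma are the same.
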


\begin{proof}
Throughout this proof, for $N\geq 1$ we shall write $\zeta_N$ to denote $e^{2\pi \sqrt{-1} /N}$. By~(\ref{eq:resultant}) we have
\begin{alignat*}{1}
|\mathrm{Res} (f_w(x), 1+x^{nm/2})| = \prod_{\lambda^{nm/2}=-1} | f_w(\lambda) |= \prod_{\substack{\lambda^{m}=\zeta_n^i,\\i~\mathrm{odd}}} |f_w(\lambda) |= \prod_{\substack{i=0,\\ i~\mathrm{odd}}}^{n-1} |P_i|
\intertext{and}
|\mathrm{Res} (f_w(x), 1-x^{nm/2})| = \prod_{\lambda^{nm/2}=1} |f_w(\lambda)|= \prod_{\substack{\lambda^{m}=\zeta_n^i,\\i~\mathrm{even}}} |f_w(\lambda) |= \prod_{\substack{i=0,\\ i~\mathrm{even}}}^{n-1} |P_i|
\end{alignat*}
where $P_i=\prod_{\lambda^m=\zeta_n^i} f_w(\lambda)$.

Note that if $\lambda^m=1$ then $f_w(\lambda)=( n/2 - (n/2-1)\lambda^k)$ so
\begin{alignat*}{1}
|P_0|&= \left| \prod_{\lambda^m=1}\left( n/2 - (n/2-1)\lambda^k \right) \right|\\
&=| \mathrm{Res} ((n/2-1)x^k-n/2  ,x^m-1)|\\
&= |(n/2-1)^m-(n/2)^m|
 \end{alignat*}
by Proposition~\ref{prop:resultant}. Further, if $\lambda^m=-1$ then $f_w(\lambda) = -\lambda^k$ if $n/2$ is even and $f_w(\lambda)=1$ if $n/2$ is odd. Either way this gives
\[ P_{n/2}=\prod_{\lambda^m=-1} f_w(\lambda)=\pm 1.\]
\noindent (a)(i) We have $|\mathrm{Res} (f_w(x),1+x^{2m})|=|P_1P_3|=|P_1P_{-1}|$ and
   \begin{alignat*}{1}
P_1&=\prod_{\lambda^m=\zeta_4} \left((1+i)-\lambda^k\right),\\
P_{-1}&=\prod_{\lambda^m=\zeta_4^{-1}} \left((1-i)-\lambda^k\right).
   \end{alignat*}
Now $\lambda^m=\zeta_4$ if and only if $\lambda=\zeta_{4m}^{1+4q}$ ($0\leq q<m$) so
\begin{alignat*}{1}
P_1
&=\prod_{q=0}^{m-1} \left( (1+i) -\zeta_{4m}^k(\zeta_{4m}^k)^{4q}\right)\\
&=\prod_{q=0}^{m-1} \left( (1+i) -\zeta_{4m}^k(\zeta_{m}^k)^{q}\right)\\
&=\mathrm{Res}(\zeta_{4m}^k x-(1+i), x^m-1)\\
&=(1+i)^m-(\zeta_{4m}^k)^m\quad \mathrm{by~Proposition~\ref{prop:resultant}}\\
&=(1+i)^m-\zeta_{4}^k\\
&=(\zeta_8\sqrt{2})^m-\zeta_{4}^k.\\
\intertext{Similarly $P_{-1}=(\zeta_8^{-1}\sqrt{2})^m-\zeta_{4}^{-k}$ so}
P_1P_{-1}
&= 2^m-(\sqrt{2})^m(\zeta_8^m\zeta_4^{-k}+\zeta_8^{-m}\zeta_4^k)+1\\
&= 2^m+1-(\sqrt{2})^m(\zeta_8^{m-2k}+\zeta_8^{-(m-2k)})\\
&= 2^m+1 - (\sqrt{2})^m 2\cos((2k-m)\pi/4).
\end{alignat*}

    \noindent (ii) We have $|\mathrm{Res} (f_w(x),1-x^{2m})|=|P_0P_2|$ and we have shown $|P_0|=2^m-1$ and $|P_2|=1$ so we are done.

\noindent (b)(i) We have $|\mathrm{Res} (f_w(x),1+x^{3m})|=|P_1P_3P_5|=|P_1P_5|=|P_1P_{-1}|$ (since $P_3=\pm 1$) and
\begin{alignat*}{1}
P_1&= \prod_{\lambda^m=\zeta_6} \left(1+\zeta_6+\zeta_6^2-\lambda^k(1+\zeta_6)\right),\\
P_{-1}&= \prod_{\lambda^m=\zeta_6^{-1}} \left(1+\zeta_6^{-1}+\zeta_6^{-2}-\lambda^k(1+\zeta_6^{-1})\right).\displaybreak[1]
\end{alignat*}
Now
$\lambda^m=\zeta_6$ if and only if $\lambda=\zeta_{6m}^{1+6q}$ ($0\leq q<m$) so
\begin{alignat*}{1}
P_1&=\prod_{q=0}^{m-1} \left( 1+\zeta_6+\zeta_6^2 -(\zeta_{6m}^k)^{1+6q}(1+\zeta_6)\right)\\
&=\prod_{q=0}^{m-1} \left( 2\zeta_6 -(\zeta_{6m}^k)^{1+6q}(1+\zeta_6)\right),\displaybreak[1]\\
P_{-1}&=\prod_{q=0}^{m-1} \left( 1+\zeta_6^{-1}+\zeta_6^{-2} -(\zeta_{6m}^{-k})^{1+6q}(1+\zeta_6^{-1})\right)\\
&= \prod_{q=0}^{m-1} \left( 2\zeta_6^{-1} -(\zeta_{6m}^{-k})^{1+6q}(1+\zeta_6^{-1})\right),
\end{alignat*}
so
\begin{alignat*}{1}
P_1P_{-1}
&=\prod_{q=0}^{m-1} \left( 7-2\left((\zeta_{6m}^{-k})^{1+6q}(\zeta_6+1)+(\zeta_{6m}^k)^{1+6q}(\zeta_6^{-1}+1)\right)\right)\\
&=\prod_{q=0}^{m-1} \left(7 -2\left( (\zeta_{6m}^{(1+6q)k}\zeta_6^{-1} +\zeta_{6m}^{-(1+6q)k}\zeta_6) +(\zeta_{6m}^{(1+6q)k}+\zeta_{6m}^{-(1+6q)k})        \right)\right)\\
&=\prod_{q=0}^{m-1} \left(7 -2\left( 2\cos\left(\frac{2\pi (1+6q)k}{6m} - \frac{2\pi}{6}\right) +2\cos \left(\frac{2\pi k(6q+1)}{6m}\right)        \right)\right).
\intertext{Now $\cos(\theta-\phi)+\cos(\theta)=2\cos(\phi/2)\cos(\phi/2-\theta)$, so}
P_1P_{-1}
&=\prod_{q=0}^{m-1} \left(7 -4\left(
2\cos\left( \frac{\pi}{6} \right)
\cos \left( \frac{2\pi}{12} -\frac{2\pi k(1+6q)}{6m}\right)
\right)\right)\displaybreak[1]\\
&=\prod_{q=0}^{m-1} \left(7 -4\sqrt{3}
\cos \left( \frac{2\pi (m -2k(1+6q)}{12m}\right)\right)\displaybreak[1]\\
&=\prod_{q=0}^{m-1}
\left(7-2\sqrt{3}(e^{\frac{2\pi \sqrt{-1}}{12m}(m-2k(1+6q))}+e^{\frac{-2\pi \sqrt{-1}}{12m}(m-2k(1+6q))})\right)\displaybreak[1]\\
&=\prod_{q=0}^{m-1} \left(7-2\sqrt{3}(\zeta_{12m}^{m-2k-12qk}+\zeta_{12m}^{-(m-2k-12qk)})\right)\displaybreak[1]\\
&=\prod_{q=0}^{m-1} \left(7-2\sqrt{3}(\zeta_{12m}^{m-2k}\zeta_{m}^{-qk}+\zeta_{12m}^{-(m-2k)}\zeta_{m}^{qk})\right)\\
&=\prod_{q=0}^{m-1} \left(7-2\sqrt{3}(\alpha\zeta_{m}^{-qk}+\alpha^{-1}\zeta_{m}^{qk})\right)
\end{alignat*}
where $\alpha=\zeta_{12m}^{m-2k}$. Thus
\begin{alignat*}{1}
P_1P_{-1}
&=\prod_{q=0}^{m-1}
\left(\left( -\zeta_{m}^{-qk}\alpha^{-1}\right)
\left(2\sqrt{3}\zeta_{m}^{2qk}- 7\alpha\zeta_{m}^{qk}+2\sqrt{3}\alpha^2 \right)\right)\\
&=\prod_{q=0}^{m-1}
\left(\left( -\zeta_{m}^{-qk}\alpha^{-1}\right)
\left(2\zeta_{m}^{qk}-\sqrt{3}\alpha\right)
\left(\sqrt{3}\zeta_{m}^{qk}-2\alpha\right)\right)\\
&=
\prod_{q=0}^{m-1}\left( \zeta_{m}^{-qk}\alpha^{-1}\right)
\prod_{q=0}^{m-1}\left(2\zeta_{m}^{qk}-\sqrt{3}\alpha\right)
\prod_{q=0}^{m-1}\left(\sqrt{3}\zeta_{m}^{qk}-2\alpha\right)\displaybreak[1]\\
&=-\alpha^{-m}\cdot \mathrm{Res}(2x^k-\sqrt{3} \alpha, x^m-1)\cdot \mathrm{Res}(\sqrt{3}x^k-2\alpha,x^m-1)\\
&=-\alpha^{-m} \left( 2^m - (\sqrt{3}\alpha)^m \right) \left( (\sqrt{3})^m - (2\alpha)^m \right)\qquad\mathrm{by~Proposition~\ref{prop:resultant}}\displaybreak[1]\\
&=-\alpha^{-m} \left( (2\sqrt{3})^m - ( (3\alpha)^m+ (4\alpha)^m ) + (2\sqrt 3 \alpha^2)^m \right)\displaybreak[1]\\
&=3^m+4^m-(2\sqrt{3})^m (\alpha^m+\alpha^{-m}) \displaybreak[1]\\
&=3^m+4^m-(2\sqrt{3})^m (\zeta_{12m}^{m(m-2k)}+\zeta_{12m}^{-m(m-2k)})\displaybreak[1]\\
&=3^m+4^m-(2\sqrt{3})^m (\zeta_{12}^{(m-2k)}+\zeta_{12}^{-(m-2k)})\\
&=3^m+4^m-(2\sqrt{3})^m \left(2\cos((m-2k)\pi/6) \right).
\end{alignat*}

    \noindent (ii) We have $|\mathrm{Res} (f_w(x),1-x^{3m})|=|P_0P_2P_4|=|P_0P_2P_{-2}|$ and we have shown $|P_0|=3^m-2^m$ and
\begin{alignat*}{1}
P_2&= \prod_{\lambda^m=\zeta_6^2} -(1+\zeta_6^{2})\lambda^k,\\
P_{-2}&= \prod_{\lambda^m=\zeta_6^{-2}} -(1+\zeta_6^{-2})\lambda^k=\prod_{\lambda^m=\zeta_6^{2}} -(1+\zeta_6^{-2})\lambda^{-k}.
\end{alignat*}
Thus
\[P_2P_{-2}
=\prod_{\lambda^m=\zeta_6^2} (1+\zeta_6^2)(1+\zeta_6^{-2})
=\prod_{\lambda^m=\zeta_6^2} (\zeta_6)(\zeta_6^{-1})
=1.\]
\end{proof}
Theorem~\ref{mainthm:orders} now follows from Lemma~\ref{lem:bicyclicutov}, Theorem~\ref{thm:newbicyclicabelianization}, and Theorem~\ref{thm:requiredres}.
The converse to Lemma~\ref{lem:abelianJ(m,k)} can be obtained as a corollary to Theorem~\ref{mainthm:orders}; that is, if $J_n(m,k)'=1$ then $m=0$ or $m\equiv k$~mod~$nm$ or $k\equiv 0$~mod~$nm$ or $m\equiv 2k$~mod~$nm$; this completes the proof of Theorem~\ref{mainthm:structure}(b).

The Mersenne Prime Conjecture asserts that there are infinitely many primes of the form $2^p-1$. It follows from Theorem~\ref{mainthm:orders} that if $p\geq 5$ is a prime such that $2^p-1$ is prime then $a_4(2p, p~\mathrm{mod}~4)=(2^p-1)^2$. In Theorem~\ref{thm:(m-2k)=0mod4} we show that in fact $$J_4(2p, p~\mathrm{mod}~4)'\cong \Z_{2^p-1}\oplus \Z_{2^p-1}.$$ Thus when $2^p-1$ is a Mersenne prime, the group $J_4(2p, p~\mathrm{mod}~4)'$ is elementary abelian and non-cyclic. In other cases $a_4(m,k)$ may be a Fermat prime.

In~\cite[page~418]{ESTW07} it is remarked that, as with the Mersenne Prime Conjecture, for coprime integers $a,b$, it seems likely that there are infinitely many primes of the form $(a^p-b^p)/(a-b)$. In particular, it is likely that there are infinitely many primes of the form $4^{p}-3^{p}$. Examples are given at~\cite[A059801,A129736]{oeis}. It follows from Theorem~\ref{mainthm:orders} that if $p\geq 7$ is prime such that $4^p-3^p$ is prime then $a_6(2p,p~\mathrm{mod}~6)=(4^p-3^p)^2$. In Theorem~\ref{thm:(m-2k)=0mod6} we show that in fact $J_6(2p,p~\mathrm{mod}~6)'\cong \Z_{4^p-3^p}\oplus \Z_{4^p-3^p}$. The number $4^{p}+3^{p}$ is prime if $p=1,4,16$ (see [38, A081505]); we do not know if these are the only cases. It follows from Theorem~\ref{mainthm:orders} that $a_6(8,1)=(4^4+3^4)^2=337^2$ and $a_6(32,1)=(4^{16}+3^{16})^2=4338014017^2$.
It will follow from Theorem~\ref{thm:(m-2k)=0mod6} that in fact $J_6(8,1)'\cong \Z_{337}\oplus \Z_{337}$ and $J_6(32,1)'\cong \Z_{4338014017}\oplus \Z_{4338014017}$. Thus the family of groups $J_6(m,k)'$ also contains elementary abelian, non-cyclic groups.

Just as numbers of the form $M_m=2^m-1$ are called \em Mersenne numbers \em and prime Mersenne numbers are \em Mersenne primes, \em Gaussian integers of the form $gm_m=(1\pm i)^m-1$ are called \em Gaussian-Mersenne numbers, \em and irreducible Gaussian-Mersenne numbers are called \em Gaussian-Mersenne primes\em . The norm $GM_m$ of $gm_m$ is given by
\[GM_m =\begin{cases}
  2^m-2^{(m+1)/2}+1 & \mathrm{if}\ m\equiv \pm 1~\mathrm{mod}~8\\
  2^m+2^{(m+1)/2}+1 & \mathrm{if}\ m\equiv \pm 3~\mathrm{mod}~8
  \end{cases}\]
and we have that $gm_m$ is a Gaussian-Mersenne prime if and only if its norm $GM_m$ is prime. It follows from Theorem~\ref{mainthm:orders} that $a_4(m,k)=GM_m$ if and only if $(m,k)=1$ and $m+k\equiv 2$~mod~$4$, so each prime $GM_m$ arises as the order of some $J_4(m,k)'$. Further background and a primality test for Gaussian-Mersenne numbers in given in~\cite{BerrizbeitiaIskra10} and a list of known primes $GM_m$ is available at~\cite[A182300]{oeis}.

As a corollary to Theorem~\ref{mainthm:orders} we note the orders of the abelianizations of certain cyclically presented groups that have been considered in the literature. The groups $G_n(x_0x_mx_k^{-1})$ form a class of groups of Fibonacci type and they have been studied by various authors since their introduction in~\cite{JohnsonMawdesley} -- see~\cite{WilliamsRevisited} for a recent survey. The groups $G_n(x_0x_mx_{2m}(x_kx_{m+k})^{-1})$ belong to the class of generalized Fibonacci groups $P(r,n,k,s,q)$, introduced in~\cite{Prishchepov95}.

\begin{corollary}\label{cor:Fibtypeabelianisations}
Let $(m,k)=1$. Then
\begin{itemize}
  \item[(a)] $|G_{4m}(x_0x_mx_k^{-1})^\mathrm{ab}|=(2^m-1)\left(2^m+1-2(\sqrt{2})^m\cos ((2k-m)\pi/4)\right)$;

  \item[(b)] $|G_{6m}(x_0x_mx_{2m}(x_kx_{m+k})^{-1})^\mathrm{ab}|=$ \newline $(3^m-2^m)\left(3^m+4^m-2(2\sqrt{3})^m\cos ((2k-m)\pi/6)\right)$.
\end{itemize}
\end{corollary}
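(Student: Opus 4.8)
The plan is to recognize both orders as instances of the resultant formula~(\ref{eq:representer}) and then read off the two factors directly from Theorem~\ref{thm:requiredres}. For part~(a), the group $G_{4m}(x_0x_mx_k^{-1})$ has representer polynomial $f_w(x) = 1 + x^m - x^k$, which is precisely the polynomial of Theorem~\ref{thm:requiredres} in the case $n=4$; for part~(b), the group $G_{6m}(x_0x_mx_{2m}(x_kx_{m+k})^{-1})$ has representer polynomial $f_w(x) = 1 + x^m + x^{2m} - x^k - x^{m+k}$, which is that polynomial in the case $n=6$. In each case write $N = nm$, so $N = 4m$ or $6m$ respectively.

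First I would record that the abelianization is finite. This is exactly what was verified inside the proofs of Theorems~\ref{thm:J'abfinite4} and~\ref{thm:J'abfinite6}, where it is shown that $f_w(\lambda) \neq 0$ for every $N$th root of unity $\lambda$; by~(\ref{eq:representer}) this gives $|G_N(w)^\mathrm{ab}| = \prod_{\lambda^N=1}|f_w(\lambda)| < \infty$, so the resultant formula applies.

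The key step is then to split the set of $N$th roots of unity according to the sign of $\lambda^{N/2}$, where $N/2 = nm/2$. Since $\lambda^N = 1$ forces $\lambda^{N/2} = \pm 1$, the product factors as
\[
|G_N(w)^\mathrm{ab}| = \Big(\prod_{\lambda^{N/2}=1}|f_w(\lambda)|\Big)\Big(\prod_{\lambda^{N/2}=-1}|f_w(\lambda)|\Big),
\]
and by~(\ref{eq:resultant}) the two factors are $|\mathrm{Res}(f_w,x^{N/2}-1)|$ and $|\mathrm{Res}(f_w,x^{N/2}+1)|$ respectively. Now Theorem~\ref{thm:requiredres} evaluates exactly these: part~(ii) gives $|\mathrm{Res}(f_w,x^{N/2}-1)|$, namely $2^m-1$ when $n=4$ and $3^m-2^m$ when $n=6$, while part~(i) gives $|\mathrm{Res}(f_w,x^{N/2}+1)|$, namely $2^m+1-2(\sqrt{2})^m\cos((2k-m)\pi/4)$ when $n=4$ and $3^m+4^m-2(2\sqrt{3})^m\cos((m-2k)\pi/6)$ when $n=6$. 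Multiplying the two factors yields the stated formulas; for~(b) one uses only that cosine is even, so $\cos((m-2k)\pi/6)=\cos((2k-m)\pi/6)$, to match the form written in the corollary.

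There is no substantive obstacle: the corollary is a direct assembly of already-proved ingredients, and the only points requiring care are to confirm that the representer polynomials coincide with those of Theorem~\ref{thm:requiredres} and that the $N$th roots of unity partition cleanly into the two halves indexed by $\lambda^{N/2} = \pm 1$. The hypothesis $(m,k)=1$ is precisely what Theorem~\ref{thm:requiredres} requires, and is carried over unchanged.
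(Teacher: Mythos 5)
Your proposal is correct and follows essentially the same route as the paper: the paper's proof likewise writes $|G_{nm}(w)^\mathrm{ab}|=|\mathrm{Res}(f_w(x),1-x^{nm})|$ and factors it as $|\mathrm{Res}(f_w(x),1+x^{nm/2})|\cdot|\mathrm{Res}(f_w(x),1-x^{nm/2})|$ before invoking Theorem~\ref{thm:requiredres}. Your additional remarks on finiteness of the abelianization and the evenness of cosine are correct housekeeping that the paper leaves implicit.
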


\begin{proof}
Let $w=w_n(m,k), v=v_n(m)$. Then
\begin{alignat*}{1}
|G_{nm}(w)^\mathrm{ab}|
&=|\mathrm{Res} (f_w(x),1-x^{nm})|\\
&=|\mathrm{Res} (f_w(x),1+x^{nm/2})|\cdot |\mathrm{Res} (f_w(x),1-x^{nm/2})|
\end{alignat*}
and the result follows from Theorem~\ref{thm:requiredres}.
\end{proof}


\section{Isomorphisms}\label{sec:isoms}

We first prove Theorem~\ref{thm:abc} (from the introduction) which asserts that $J_4(m,k)$ is isomorphic to $F^{m,k,m-k}$ and gives a similar presentation for the groups $J_6(m,k)$.

\begin{proof}[Proof of Theorem~\ref{thm:abc}]
\begin{alignat*}{1}
J_4(m,k)&=\pres{t,y}{t^4,y^{m-k}t^3y^kt^2}\\
&= \pres{t,y,R,S}{t^4,y^{m-k}t^3y^kt^2,R=t^2,S=y^{-1}}\\
&= \pres{t,R,S}{R^2,S^{k-m}tRS^{-k}R,R=t^2} \displaybreak[1]\\
&= \pres{t,R,S}{R^2,t=RS^{m-k}RS^k, R=t^2} \\
&= \pres{R,S}{R^2, R=RS^{m-k}RS^kRS^{m-k}RS^k} \\
&= \pres{R,S}{R^2, RS^{m}RS^kRS^{m-k}}=F^{m,k,m-k}.\\
J_6(m,k)&=\pres{t,y}{t^6,y^{m-k}t^3y^kt^2}\\
&= \pres{t,y,R,S}{t^6,y^{m-k}t^3y^kt^2,R=t^3,S=y} \\
&= \pres{t,R,S}{R^2,S^{m-k}RS^{k}Rt^{-1},R=t^3} \\
&= \pres{t,R,S}{R^2,t=S^{m-k}RS^kR, R=t^3} \\
&= \pres{R,S}{R^2, R=S^{m-k}RS^kRS^{m-k}RS^kRS^{m-k}RS^kR} \\
&= \pres{R,S}{R^2, RS^mRS^kRS^{m-k}RS^kRS^{m-k}}.
\end{alignat*}
\end{proof}

We now turn our attention to proving Theorem~\ref{mainthm:Isos}. We start with

\begin{lemma}\label{lem:easyisom}
Let $n=4$ or $6$. Then $J_n(m,k)\cong J_n(m,m-k)$.
\end{lemma}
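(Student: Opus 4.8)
The plan is to exhibit an explicit automorphism-type isomorphism between the presentations $J_n(m,k)=\pres{t,y}{t^n,y^{m-k}t^3y^kt^2}$ and $J_n(m,m-k)=\pres{t,y}{t^n,y^{k}t^3y^{m-k}t^2}$ by means of a substitution on the generators. The natural candidates are maps that invert and conjugate the relator, since replacing $k$ by $m-k$ simply swaps the two $y$-blocks of exponents $m-k$ and $k$ in the relator word. First I would write down the target relator $y^{k}t^3y^{m-k}t^2$ and compare it with a cyclic permutation of the inverse of the original relator $y^{m-k}t^3y^kt^2$, looking for a transformation of $t$ and $y$ that carries one to the other while fixing $t^n$.

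Concretely, I would try the substitution $t\mapsto t^{-1}$ (equivalently $t\mapsto t^{n-1}$) together with $y\mapsto y$, or the inversion $y\mapsto y^{-1}$ combined with an adjustment of $t$, and then verify the relator maps correctly. For instance, inverting the original relator gives $t^{-2}y^{-k}t^{-3}y^{-(m-k)}=1$; conjugating or cyclically permuting and then applying $y\mapsto y^{-1}$, $t\mapsto t^{-1}$ should recover the shape $y^{k}t^3y^{m-k}t^2$ up to the relation $t^n=1$. The key algebraic step is to check that under the chosen substitution, $t^3$ and $t^2$ behave correctly: since $t^n=1$ with $n=4$ or $6$, we have $t^{-3}=t^{n-3}$ and $t^{-2}=t^{n-2}$, and for $n=4$ this gives $t^{-3}=t$, $t^{-2}=t^2$, while for $n=6$ it gives $t^{-3}=t^3$, $t^{-2}=t^4$. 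I would therefore treat the two values of $n$ either uniformly or in two short cases, confirming in each that the image of the defining relator is a cyclic conjugate of (a power of) the target relator, hence trivial in $J_n(m,m-k)$.

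The main thing to get right is bookkeeping: one must check that the proposed map sends \emph{every} defining relator of $J_n(m,k)$ to a consequence of the relators of $J_n(m,m-k)$, and that the inverse substitution does likewise, so that we genuinely have mutually inverse homomorphisms rather than just a one-way map. Because both presentations are balanced two-generator two-relator presentations and the substitution is an invertible change of generators (each of $t,y$ mapped to a word whose inverse map is equally explicit), establishing that the single non-trivial relator is preserved up to cyclic permutation and inversion is enough; the relator $t^n$ is manifestly preserved by any map sending $t$ to $t^{\pm1}$. I expect the only real obstacle to be pinning down the precise substitution (the correct combination of inverting $y$, replacing $t$ by a suitable power, and possibly a cyclic shift of the relator), after which the verification is a direct and short computation in the free product $\gpres{t}\ast\gpres{y}$ modulo $t^n=1$.
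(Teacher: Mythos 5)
Your plan is correct and is essentially the paper's proof: the substitution that works is the double inversion $y\mapsto y^{-1}$, $t\mapsto t^{-1}$, after which the image of the relator $y^{m-k}t^3y^kt^2$, once inverted and cyclically permuted, is exactly $y^{k}t^3y^{m-k}t^2$. Note that this is a computation purely in the free group on $t,y$ --- no use of $t^n=1$ and no case split on $n=4$ versus $n=6$ is needed --- and since the substitution is an involution on the free group, the two-way verification you worry about is automatic.
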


\begin{proof}
Replacing generators $y,t$ by their inverses then inverting and cyclically permuting the second relator transforms the presentation of the group $J_n(m,k)$ into a presentation of $J_n(m,m-k)$.
\end{proof}

\begin{lemma}\label{lem:isomepsilon}
Let $n=4$ or $6$ and suppose $(m,k)=1$. If $k\equiv \epsilon$~mod~$n$ where $\epsilon= \pm 1$ then $J_n(m,k)\cong J_n(m,\epsilon)$.
\end{lemma}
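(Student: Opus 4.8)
The plan is to realize the isomorphism by a single substitution on the generating set, namely $t \mapsto t$ and $y \mapsto y^s$ for a carefully chosen integer $s$. The point is that by Lemmas~\ref{lem:conjugacyrelation4} and~\ref{lem:conjugacyrelation6} the relation $y^{nm} = 1$ holds in every $J_n(m,k)$, so exponents of $y$ may be read modulo $nm$ and the map $y \mapsto y^s$ restricts to a bijection of $\gpres{y} \cong \Z_{nm}$ as soon as $s$ is a unit modulo $nm$. Applying this substitution to the relator $y^{m-k}t^3y^kt^2$ produces $y^{s(m-k)}t^3y^{sk}t^2$, so to land on the relator $y^{m-\epsilon}t^3y^\epsilon t^2$ of $J_n(m,\epsilon)$ it suffices to arrange $sk \equiv \epsilon$ and $s(m-k) \equiv m-\epsilon \pmod{nm}$; adding these, the second congruence is equivalent to $s \equiv 1 \pmod n$ given the first.

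First I would check the arithmetic hypothesis that makes $s$ available. Since $(m,k)=1$ and $k \equiv \epsilon \equiv \pm 1 \pmod n$ forces $(k,n)=1$, no prime dividing $nm$ can divide $k$, so $k$ is a unit modulo $nm$. I would then set $s \equiv \epsilon k^{-1} \pmod{nm}$, which is again a unit and satisfies $sk \equiv \epsilon \pmod{nm}$ by construction. The congruence $s \equiv 1 \pmod n$ then follows for free: reducing modulo $n$ and using $k \equiv \epsilon$ together with $\epsilon^{-1} = \epsilon$ gives $s \equiv \epsilon k^{-1} \equiv \epsilon\cdot\epsilon = \epsilon^2 = 1 \pmod n$. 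Hence both required congruences hold.

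With $s$ in hand, the verification that $\phi\colon J_n(m,k) \to J_n(m,\epsilon)$, $t\mapsto t$, $y \mapsto y^s$, is a well-defined homomorphism is immediate: $t^n \mapsto t^n = 1$, while $\phi(y^{m-k}t^3y^kt^2) = y^{s(m-k)}t^3y^{sk}t^2$ reduces, using $y^{nm}=1$ in the target, to the defining relator $y^{m-\epsilon}t^3y^\epsilon t^2 = 1$. To see that $\phi$ is an isomorphism I would exhibit its inverse as the analogous substitution $y \mapsto y^{s'}$, $t \mapsto t$ with $s' \equiv s^{-1} \equiv \epsilon k \pmod{nm}$; the same computation (now with $s'\epsilon \equiv \epsilon^2 k = k$) shows this is a well-defined homomorphism $J_n(m,\epsilon) \to J_n(m,k)$, and since $ss' \equiv 1 \pmod{nm}$ the two maps compose to the identity on each side. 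The only real content is the choice $s = \epsilon k^{-1} \bmod nm$ together with the observation that $(k,nm)=1$; everything after that is a short congruence check, and the mild point to watch is simply that all $y$-exponents are manipulated modulo $nm$ rather than as integers, which is exactly what Lemmas~\ref{lem:conjugacyrelation4} and~\ref{lem:conjugacyrelation6} license.
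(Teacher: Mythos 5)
Your proposal is correct and is essentially the paper's argument in different packaging: the paper performs the same substitution via Tietze transformations, introducing $u = y^{\epsilon k}$ with $y = u^{\epsilon\bar{k}}$ where $k\bar{k}\equiv 1 \bmod nm$, which is exactly your pair of mutually inverse maps $y\mapsto y^{\epsilon k^{-1}}$ and $y\mapsto y^{\epsilon k}$. All the key points match, including the observations that $(k,nm)=1$ and that $s\equiv 1 \bmod n$ suffices to adjust the exponent $m$ modulo $nm$.
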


\begin{proof}
Recall from Lemmas~\ref{lem:conjugacyrelation4} and~\ref{lem:conjugacyrelation6} that $y^{nm}=1$ holds in $J_n(m,k)$. We have that $(k,n)=1$ and $(m,k)=1$ so $(k,nm)=1$. Therefore there exists $\bar{k}$ such that $k\bar{k}\equiv 1$~mod~$nm$. Then
\begin{alignat*}{1}
J_n(m,k)
&=\pres{y,t}{t^n,y^{nm},y^{m-k}t^3y^kt^2}\\
&=\pres{y,t,u}{t^n,y^{nm},y^{m-k}t^3y^kt^2,u=y^{\epsilon k}}.\\
\intertext{Now $y=y^1=y^{k\bar{k}}=u^{\epsilon \bar{k}}$ so}
J_n(m,k)
&=\pres{y,t,u}{t^n,y^{nm},y^{m-k}t^3y^kt^2,u=y^{\epsilon }k,y=u^{\epsilon \bar{k}}}\\
&=\pres{t,u}{t^n,u^{\epsilon nm\bar{k}}, u^{\epsilon m\bar{k}-\epsilon k\bar{k}}t^3u^{\epsilon k\bar{k}}t^2,u=u^{\bar{k}k}}\\
&=\pres{t,u}{t^n,u^{nm\bar{k}}, u^{\epsilon m\bar{k}-\epsilon k\bar{k}}t^3u^{\epsilon k\bar{k}}t^2,u^{\bar{k}k-1}}.\\
\intertext{Now $k\bar{k}\equiv 1$~mod~$nm$ so $k\bar{k}-1=nm\alpha$ for some $\alpha$ where $(\alpha,\bar{k})=1$. So}
J_n(m,k)
&=\pres{t,u}{t^n,u^{nm\bar{k}}, u^{\epsilon m\bar{k}-\epsilon k\bar{k}}t^3u^{\epsilon k\bar{k}}t^2,u^{nm\alpha}}\\
&=\pres{t,u}{t^n,u^{nm(\bar{k},\alpha)}, u^{\epsilon m\bar{k}-\epsilon k\bar{k}}t^3u^{\epsilon k\bar{k}}t^2}\\
&=\pres{t,u}{t^n,u^{nm}, u^{\epsilon m\bar{k}-\epsilon }t^3u^\epsilon t^2}.\\
\intertext{Now $k\equiv \epsilon$~mod~$n$ implies that $k\bar{k}\equiv \epsilon \bar{k}$~mod~$n$ so $\epsilon\bar{k}\equiv 1$~mod~$n$ so $m\epsilon \bar{k}\equiv m$~mod~$n$. Therefore}
J_n(m,k)
&=\pres{t,u}{t^n,u^{nm}, u^{m-\epsilon }t^3u^\epsilon t^2}\\
&=J_n(m,\epsilon).
\end{alignat*}
\end{proof}

\begin{lemma}\label{lem:isomk=3}
If $(m,6)=1$, $(m,k)=1$ and $k\equiv 3$~mod~$6$ then $J_6(m,k)\cong J_6(m,3)$.
\end{lemma}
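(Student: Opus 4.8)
The plan is to mirror the substitution argument used in the proof of Lemma~\ref{lem:isomepsilon}, with one essential modification. There, the key device was the inverse $\bar{k}$ of $k$ modulo $nm$, which allowed $k$ to be rescaled to $\epsilon$. Here that inverse is unavailable, since $k\equiv 3$~mod~$6$ forces $3\mid k$ and hence $k$ is not a unit modulo $6m$. I would instead produce, by the Chinese Remainder Theorem, a unit $\sigma$ modulo $6m$ that rescales the relator of $J_6(m,k)$ into that of $J_6(m,3)$, and then run the same Tietze/substitution computation.

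First I would record the arithmetic. Writing $k=3\ell$, the congruence $k\equiv 3$~mod~$6$ shows $\ell$ is odd, and $(m,k)=1$ gives $(m,\ell)=1$; together these yield $(\ell,2m)=1$, so $\ell$ is a unit modulo $2m$. Also $(m,6)=1$ gives $(6,2m)=2$ and $\mathrm{lcm}(6,2m)=6m$. I then seek $\sigma$ satisfying the pair of congruences $\sigma\equiv 1$~mod~$6$ and $\sigma\ell\equiv 1$~mod~$2m$. The second is solvable because $\ell$ is a unit modulo $2m$, and for the simultaneous system to be solvable the two prescribed residues need only agree modulo $(6,2m)=2$; but $\sigma\equiv 1$~mod~$6$ is odd and $\ell^{-1}$ is odd (as $\ell$ is odd), so both residues are $1$ modulo $2$ and the system is consistent. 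CRT thus produces $\sigma$, unique modulo $6m$, and $\sigma$ is automatically coprime to $6m$. The purpose of these two congruences is exactly that $\sigma\equiv 1$~mod~$6$ gives $\sigma m\equiv m$~mod~$6m$, while $\sigma\ell\equiv 1$~mod~$2m$ gives $\sigma k=3\sigma\ell\equiv 3$~mod~$6m$; hence also $\sigma(m-k)\equiv m-3$~mod~$6m$.

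With $\sigma$ in hand I would carry out the substitution as in Lemma~\ref{lem:isomepsilon}. Using Lemma~\ref{lem:conjugacyrelation6} I adjoin the relation $y^{6m}=1$ to the presentation of $J_6(m,k)$, choose $c$ with $c\sigma\equiv 1$~mod~$6m$, introduce the new generator $u=y^{c}$, and note that $y=u^{\sigma}$ since $y^{6m}=1$. Rewriting the relator in terms of $u$ gives
\[
y^{m-k}t^3y^kt^2=u^{\sigma(m-k)}t^3u^{\sigma k}t^2=u^{m-3}t^3u^3t^2,
\]
so after eliminating $y$ I reach the presentation $\pres{t,u}{t^6,u^{6m},u^{m-3}t^3u^3t^2}$. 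Finally, Lemma~\ref{lem:conjugacyrelation6} applied to $J_6(m,3)$ shows that $u^{6m}=1$ is redundant there, so this presentation is precisely $J_6(m,3)$, giving $J_6(m,k)\cong J_6(m,3)$.

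The only genuine obstacle, relative to Lemma~\ref{lem:isomepsilon}, is the non-invertibility of $k$ modulo $6m$ caused by the common factor $3$. The resolution is to isolate $\ell=k/3$, invert it modulo $2m$ rather than modulo $6m$, and assemble $\sigma$ by CRT; the content of the argument is the verification that the compatibility condition modulo $(6,2m)=2$ reduces to the parity statement that $\ell$ is odd, which holds automatically from $k\equiv 3$~mod~$6$. Everything after the construction of $\sigma$ is the same formal substitution as in the earlier lemma.
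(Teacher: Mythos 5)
Your proposal is correct and follows essentially the same route as the paper: both arguments produce a unit modulo $6m$ that rescales the exponents $k\mapsto 3$, $m-k\mapsto m-3$ while fixing $m$ modulo $6m$, and then perform the same Tietze substitution $u=y^{c}$, $y=u^{\sigma}$. The only difference is presentational: the paper writes down the (inverse) unit explicitly as $p=2m\epsilon+k/3$ after a small preliminary reduction, whereas you obtain $\sigma\equiv p^{-1}$ abstractly by the Chinese Remainder Theorem, checking the compatibility condition modulo $\gcd(6,2m)=2$ via the parity of $k/3$.
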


\begin{proof}
Note that $k/3$ is odd. If $k/3\equiv 1$~mod~$6$ then let $\bar{k}=k+6m$. This gives $J_6(m,k)\cong J_6(m,\bar{k})$ and $\bar{k}/3 \not \equiv 1$~mod~$6$. Thus we may assume $k/3\not \equiv 1$~mod~$6$ so $k/3\equiv 3$ or $5$~mod~$6$. Let $\epsilon=\pm 1$ be defined by
\[\epsilon =\begin{cases}
  -m~\mathrm{mod}~6 & \mathrm{if}\ k/3\equiv 3~\mathrm{mod}~6,\\
  m~\mathrm{mod}~6 & \mathrm{if}\ k/3\equiv 5~\mathrm{mod}~6,
\end{cases}\]
and set $p=2m\epsilon +k/3$. Then it follows that $p\equiv 1$~mod~$6$, $mp\equiv m$~mod~$6m$ and
\begin{alignat}{1}
3p&\equiv k~\mathrm{mod}~6m\label{eq:3p=k}
\intertext{whence}
(m-3)p&\equiv m-k~\mathrm{mod}~6m.\label{eq:(m-3)p}
\end{alignat}
Further, $p\equiv k/3$~mod~$m$ so $(p,m)=(k/3,m)=1$ and hence $(p,6m)=1$.

Now
\begin{alignat*}{1}
J_6(m,k)
&=\pres{y,t}{t^6,y^{6m},y^{m-k}t^3y^kt^2}\\
&=\pres{y,t,u}{t^6,y^{6m},y^{m-k}t^3y^kt^2,u=y^p}.\\
\intertext{Then $y^k=u^3$ by~(\ref{eq:3p=k}) and $y^{m-k}=u^{m-3}$ by~(\ref{eq:(m-3)p}) so}
J_6(m,k)
&=\pres{y,t,u}{t^6,y^{6m},u^{m-3}t^3u^3t^2,u=y^p}.\\
\intertext{Now $(p,6m)=1$ implies that there exists $\alpha,\beta\in\Z$ with $(\alpha,\beta)=1$ such that $6m\alpha +p\beta=1$ so $\beta p\equiv 1$~mod~$6m$, so $y=u^\beta$ and hence}
J_6(m,k)
&=\pres{y,t,u}{t^6,y^{6m},u^{m-3}t^3u^3t^2,u=y^p,y=u^\beta}\\
&=\pres{t,u}{t^6,u^{6m\beta},u^{m-3}t^3u^3t^2,u=u^{p\beta}}\\
&=\pres{t,u}{t^6,u^{6m\beta},u^{m-3}t^3u^3t^2,u^{p\beta-1}}\\
&=\pres{t,u}{t^6,u^{6m\beta},u^{m-3}t^3u^3t^2,u^{6m\alpha}}\\
&=\pres{t,u}{t^6,u^{6m(\alpha,\beta)},u^{m-3}t^3u^3t^2}\\
&=\pres{t,u}{t^6,u^{6m},u^{m-3}t^3u^3t^2}\\
&=J_6(m,3).
\end{alignat*}
\end{proof}

\begin{proof}[Proof of Theorem~\ref{mainthm:Isos}.]
If the groups $J_n(m_1,k_1)$ and $J_n(m_2,k_2)$ are isomorphic, then so are their abelianizations,
ie $\Z_{nm_1}\cong\Z_{nm_2}$, so $m_1=m_2=m$ (say). Further we have that $|(J_n(m,k_1)')^\mathrm{ab}|=|(J_n(m,k_2)')^\mathrm{ab}|$ so by Theorem~\ref{mainthm:orders} we have $$\cos ((2k_1-m)\pi/n)=\cos ((2k_2-m)\pi/n)$$ so $(2k_1-m)\equiv \pm (2k_2-m)$~mod~$2n$. That is, $k_1\equiv k_2$~mod~$n$ or $k_1+k_2\equiv m$~mod~$n$.

Suppose then that $m_1=m_2=m$ (say) and that either $k_1\equiv k_2$~mod~$n$ or $k_1+k_2\equiv m$ mod~$n$. Replacing $k_1$ by $m-k_1$ (which we may do since by Lemma~\ref{lem:easyisom} we have $J_n(m,k_1)\cong$ $J_n(m,m-k_1)$) the second condition becomes the first condition so without loss of generality we may assume $k_1\equiv k_2$~mod~$n$.

Let $k_1'=m-k_1$, $k_2'=m-k_2$. Then $k_1'\equiv k_2'$~mod~$n$ and by Lemma~\ref{lem:easyisom} we have $J_n(m,k_1)\cong J_n(m,k_1')$ and $J_n(m,k_2)\cong J_n(m,k_2')$. If $k_1\equiv \epsilon$~mod~$n$ ($\epsilon =\pm 1$) then Lemma~\ref{lem:isomepsilon} implies that $J_n(m,k_1)\cong J_n(m,\epsilon)$ and $J_n(m,k_2)\cong J_n(m,\epsilon)$ so $J_n(m,k_1)\cong$ $J_n(m,k_2)$. Similarly, if $k_1'\equiv \epsilon$~mod~$n$ then  $J_n(m,k_1')\cong J_n(m,k_2')$ so $J_n(m,k_1)\cong J_n(m,k_2)$. Thus we may assume that $k_1,k_1'\not \equiv \pm 1$~mod~$n$. The fact that $(m,k_1)=1$ implies that either $k_1$ or $k_1'$ is odd. Since $k_1,k_1'\not \equiv \pm 1$~mod~$n$ it follows that $n=6$ and either $k_1\equiv 3$~mod~$6$ or $k_1'\equiv 3$~mod~$6$ which in turn implies that $(m,6)=1$. By Lemma~\ref{lem:isomk=3} we have that $J_6(m,k_1)\cong J_6(m,3)$. It also gives that either $k_2\equiv 3$~mod~$6$ or $k_2'\equiv 3$~mod~$6$ and so $J_6(m,k_2)\cong J_6(m,3)$, so $J_6(m,k_1)\cong J_6(m,k_2)$, as required.
\end{proof}

Theorem~\ref{mainthm:Isos} implies that if $(m,k)=1$ then $J_4(m,k)$ is isomorphic to either $J_4(m,1)$ or $J_4(m,-1)$. Further, $J_4(m,1)\cong J_4(m,-1)$ if and only if $m\equiv 0$~mod~$4$. Likewise, if $(m,k)=1$ then $J_6(m,k)$ is isomorphic to one of $J_6(m,1),J_6(m,-1)$ or $J_6(m,3)$. If $m$ is odd these groups are pairwise non-isomorphic. If $m\equiv 0$~mod~$6$ we have $J_6(m,1)\cong J_6(m,-1)$ and $(m,3)=3$ so only one isomorphism type of finite group occurs. When $m\equiv 2$~mod~$6$ we have $J_6(m,1)\not \cong J_6(m,-1)\cong J_6(m,3)$ and when $m\equiv 4$~mod~$6$ we have $J_6(m,-1)\not \cong$ $J_6(m,1)\cong J_6(m,3)$ so only two isomorphism types of finite groups occur.


\section{Non-metacyclic groups $J_n(m,k)$}\label{sec:noncyclicJ'}

In Theorems~\ref{thm:J4'finite} and~\ref{thm:J6'finite} it was shown that, for $n=4$ or $6$, if $(m,k)=1$ and $m-2k\not \equiv 0$ mod~$n$ then $J_n(m,k)'$ is cyclic. We now show that if $(m,k)=1$ and $m-2k\equiv 0$~mod~$n$ then $J_n(m,k)'$ is the direct product of two cyclic groups of equal order and hence $J_n(m,k)$ is not metacyclic, thus completing the proof of Theorem~\ref{mainthm:structure}(c).

\begin{theorem}\label{thm:(m-2k)=0mod4}
Suppose $(m,k)=1$ and $m-2k\equiv 0$~mod~$4$. Then
\[J_4(m,k)'\cong \begin{cases}
\Z_{2^{m/2}-1}\oplus \Z_{2^{m/2}-1}& \mathrm{if}~m-2k\equiv 0~\mathrm{mod}~8,\\
\Z_{2^{m/2}+1}\oplus \Z_{2^{m/2}+1}& \mathrm{if}~m-2k\equiv 4~\mathrm{mod}~8.
\end{cases}\]
\end{theorem}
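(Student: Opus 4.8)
Set $J=J_4(m,k)$. The plan is to show that each of the two generators $x_0$ and $x_m$ of $J'$ has order dividing $d$, where $d=2^{m/2}-1$ if $m-2k\equiv 0$~mod~$8$ and $d=2^{m/2}+1$ if $m-2k\equiv 4$~mod~$8$, and then to identify $J'$ with $\Z_d\oplus\Z_d$ by a cardinality count. By Theorem~\ref{thm:J4'finite}, the hypothesis $m-2k\equiv 0$~mod~$4$ guarantees that $J'$ is finite and abelian, is generated by $x_0$ and $x_m$, and satisfies the relations $x_i^2=x_{i+2k-m}$; from the refined presentation in that proof one also has $x_ix_{i+2m}=1$ for all $i$, so in particular $x_{2m}=x_0^{-1}$ and $x_{3m}=x_m^{-1}$.

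First I would record the parity data forced by the hypotheses. Since $m-2k\equiv 0$~mod~$4$ and $(m,k)=1$, the integer $m$ is even and $k$ is odd; writing $m=2s$, the congruence $2(s-k)\equiv 0$~mod~$4$ shows $s$ is odd and $s-k$ is even, with $s-k\equiv 0$~mod~$4$ exactly when $m-2k\equiv 0$~mod~$8$ and $s-k\equiv 2$~mod~$4$ exactly when $m-2k\equiv 4$~mod~$8$.

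The heart of the argument is an iteration of the squaring relation. Working in the abelian group $J'$, the relation $x_i^2=x_{i+2k-m}$ gives the identity $x_i^{2^j}=x_{i+j(2k-m)}$ for every $j\geq 0$, with subscripts read modulo $4m$. Taking $j=m/2=s$, the subscript advances by $s(2k-m)=m(k-s)$, and modulo $4m$ this reduces to $m\cdot\big((k-s)\bmod 4\big)$, hence to $0$ when $m-2k\equiv 0$~mod~$8$ and to $2m$ when $m-2k\equiv 4$~mod~$8$. Applying this with $i=0$ and with $i=m$, and invoking $x_{2m}=x_0^{-1}$, $x_{3m}=x_m^{-1}$, I get in the first case $x_0^{2^{m/2}}=x_0$ and $x_m^{2^{m/2}}=x_m$, whence $x_0^{\,2^{m/2}-1}=x_m^{\,2^{m/2}-1}=1$; and in the second case $x_0^{2^{m/2}}=x_{2m}=x_0^{-1}$ and $x_m^{2^{m/2}}=x_{3m}=x_m^{-1}$, whence $x_0^{\,2^{m/2}+1}=x_m^{\,2^{m/2}+1}=1$. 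In both cases $x_0$ and $x_m$ have order dividing $d$.

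To finish, since $J'$ is abelian and generated by $x_0,x_m$, sending the standard generators of $\Z_d\oplus\Z_d$ to $x_0$ and $x_m$ defines a surjection $\Z_d\oplus\Z_d\twoheadrightarrow J'$. Theorem~\ref{mainthm:orders} gives $|J'|=a_4(m,k)=2^m+1-2(\sqrt 2)^m\cos((2k-m)\pi/4)$, and the cosine is $+1$ in the first case and $-1$ in the second, so $|J'|=(2^{m/2}-1)^2$, respectively $(2^{m/2}+1)^2$, that is $d^2=|\Z_d\oplus\Z_d|$. A surjection of finite groups of equal order is an isomorphism, giving $J'\cong\Z_d\oplus\Z_d$. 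I expect the only delicate step to be the modular bookkeeping in the squaring iteration: the whole result rests on correctly reducing the half-orbit shift $m(k-s)$ modulo $4m$ and on placing $s-k$ in the right residue class modulo $4$; once the identities $x_0^{2^{m/2}}=x_0^{\pm1}$ and $x_m^{2^{m/2}}=x_m^{\pm1}$ are established, the remaining order count is immediate.
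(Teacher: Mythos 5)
Your proof is correct, and it follows the paper's overall strategy — bound the orders of the two generators $x_0,x_m$ of the abelian group $J_4(m,k)'$ and then compare with the order $(2^{m/2}\pm 1)^2$ supplied by Theorem~\ref{mainthm:orders} — but the key step is executed differently. The paper first normalizes the parameter: it invokes Theorem~\ref{mainthm:Isos} to replace $k$ by $k'=m/2-4$ (resp.\ $m/2-2$), so that the squaring relation becomes $x_i=x_{i-8}^2$ (resp.\ $x_i=x_{i-4}^2$), and then iterates; in the case $m-2k\equiv 4$ mod $8$ this only yields $x_i^{2^m-1}=1$ after a full cycle of $4m$, and a gcd computation $\bigl((2^{m/2}+1)^2,2^m-1\bigr)=2^{m/2}+1$ is needed to extract the correct exponent. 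You instead iterate the relation $x_i^2=x_{i+2k-m}$ directly with the original $k$, reduce the half-orbit shift $(m/2)(2k-m)\equiv 0$ or $2m$ modulo $4m$, and use the auxiliary relation $x_ix_{i+2m}=1$ (correctly cited from the refined presentation in the proof of Theorem~\ref{thm:J4'finite}, where it is established for all $(m,k)=1$) to land on $x_i^{2^{m/2}}=x_i^{\pm 1}$ in one pass. This buys you two small economies — no appeal to the isomorphism classification and no gcd step — at the cost of the modular bookkeeping you flag, which I have checked and which is carried out correctly; the concluding cardinality count is identical in both arguments.
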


\begin{proof}
By Theorem~\ref{mainthm:orders} the order $|J_4(m,k)'|=(2^{m/2}+\epsilon)^2$ where $\epsilon =-1$ if $m-2k\equiv 0$ mod~$8$ and $\epsilon=1$ if $m-2k\equiv 4$~mod~$8$. By Theorem~\ref{thm:J4'finite}  we have that $J_4(m,k)'$ is abelian and is generated by $x_0$ and $x_m$. We show that $x_i^{2^{m/2}+\epsilon}=1$ for any $0\leq i< 4m$, so in particular $x_0^{2^{m/2}+\epsilon}=1$ and $x_m^{2^{m/2}+\epsilon}=1$, as this implies that $J_4(m,k)'\cong \Z_{2^{m/2}+\epsilon}\oplus\Z_{2^{m/2}+\epsilon}$. Note that the order of $J_4(m,k)'$ implies that $x_i^{(2^{m/2}+\epsilon)^2}=1$. By Theorem~\ref{thm:J4'finite}  we have that the relation $x_i=x_{i+2k-m}^2$ holds in $J_4(m,k)'$.

Suppose that $m-2k\equiv 0$~mod~$8$ and let $k'=m/2-4$. Now $m-2k\equiv 0$~mod~$8$ implies that $k\equiv m/2$~mod~$4$ so $k+k'\equiv m$~mod~$4$ and hence $J_4(m,k)\cong J_4(m,k')$ by Theorem~\ref{mainthm:Isos}. The relation $x_{i}=x_{i+2k'-m}^2$ is simply $x_i=x_{i-8}^2$ so
\[x_i=x_{i-8}^{2}=x_{i-16}^{2^2}=\ldots =x_{i-8m/2}^{2^{m/2}}=x_{i}^{2^{m/2}}\]
and thus $x_i^{2^{m/2}-1}=1$.

Suppose then that $m-2k\equiv 4$~mod~$8$ and let $k'=m/2-2$. Now $m-2k\equiv 4$~mod~$8$ implies that $k\equiv 2+m/2$~mod~$4$ so $k+k'\equiv m$~mod~$4$ and hence $J_4(m,k)\cong J_4(m,k')$ by Theorem~\ref{mainthm:Isos}. The relation $x_{i}=x_{i+2k'-m}^2$ is simply $x_i=x_{i-4}^2$ so
\[x_i=x_{i-4}^2=x_{i-8}^{2^2}=\ldots =x_{i-4m}^{2^m}=x_{i}^{2^{m}}\]
and thus $x_i^{2^{m}-1}=1$. But we also have $x_i^{(2^{m/2}+1)^2}=1$ and
\[\left( (2^{m/2}+1)^2, 2^{m}-1\right)= (2^{m/2}+1) \left( 2^{m/2}+1, 2^{m/2}-1\right)=2^{m/2}+1\]
so $x_i^{2^{m/2}+1}=1$.
\end{proof}

\begin{theorem}\label{thm:(m-2k)=0mod6}
Suppose $(m,k)=1$ and $m-2k\equiv 0$~mod~$6$.Then
\[J_6(m,k)'\cong \begin{cases}
\Z_{4^{m/2}-3^{m/2}}\oplus \Z_{4^{m/2}-3^{m/2}}& \mathrm{if}~m-2k\equiv 0~\mathrm{mod}~12,\\
\Z_{4^{m/2}+3^{m/2}}\oplus \Z_{4^{m/2}+3^{m/2}}& \mathrm{if}~m-2k\equiv 6~\mathrm{mod}~12.
\end{cases}\]
\end{theorem}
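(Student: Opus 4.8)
The plan is to follow the template of the proof of Theorem~\ref{thm:(m-2k)=0mod4} almost verbatim, replacing the base relation $x_i^2=x_{i+2k-m}$ by the relation $x_i^3=x_{i+m-2k}^4$ of the presentation~(\ref{eq:J6'pres}). First I would record the arithmetic setup. Since $(m,k)=1$ and $m-2k\equiv 0$~mod~$6$ is even, $m$ must be even and $k$ odd, so $m/2\in\Z$. By Theorem~\ref{mainthm:orders}, $\cos((2k-m)\pi/6)=1$ when $m-2k\equiv 0$~mod~$12$ and equals $-1$ when $m-2k\equiv 6$~mod~$12$, so $|J_6(m,k)'|=(4^{m/2}+\epsilon 3^{m/2})^2$, where $\epsilon=-1$ in the first case and $\epsilon=+1$ in the second. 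By Theorem~\ref{thm:J6'finite} the group $J_6(m,k)'$ is abelian and generated by $x_0$ and $x_m$. The whole proof then reduces to showing that \emph{every} generator satisfies $x_i^{4^{m/2}+\epsilon 3^{m/2}}=1$: once this is known, $J_6(m,k)'$ is an abelian group of order $N^2$ that is generated by two elements and has exponent dividing $N:=4^{m/2}+\epsilon 3^{m/2}$, and the invariant-factor decomposition then forces $J_6(m,k)'\cong\Z_N\oplus\Z_N$.

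Next I would use the isomorphism theorem, Theorem~\ref{mainthm:Isos}, to replace $k$ by a value $k'$ for which $m-2k'$ is minimal. Concretely I would set $k'=m/2-6$ when $m-2k\equiv 0$~mod~$12$ and $k'=m/2-3$ when $m-2k\equiv 6$~mod~$12$, so that $c':=m-2k'$ equals $12$ or $6$ respectively. To apply Theorem~\ref{mainthm:Isos} I must check $k\equiv k'$~mod~$6$ and $(m,k')=1$. The congruence is immediate from $2k\equiv m$ (resp.\ $m-6$)~mod~$12$. For the coprimality I would argue that $3\nmid m$ (otherwise $3\mid k$, contradicting $(m,k)=1$), and that the parity of $k$ forces $m\equiv 2$~mod~$4$ in the first case and $4\mid m$ in the second, so that in either case $k'$ is odd; combined with the fact that $d=\gcd(m,k')$ divides $m-2k'=c'$ this pins $\gcd(m,k')=1$. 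Hence $J_6(m,k)'\cong J_6(m,k')'$, and the latter is again abelian, two-generated by $x_0,x_m$, and of the same order.

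The heart of the argument is then the iteration of the relation $x_i^3=x_{i+c'}^4$ coming from~(\ref{eq:J6'pres}). Raising to successive powers gives $x_i^{3^\ell}=x_{i+c'\ell}^{4^\ell}$ for all $\ell\geq 0$ (the bicyclic analogue of the telescoping used in Lemma~\ref{lemma:cyclicBase}). Since $c'\mid 6m$, the shift closes up after $\ell_0=6m/c'$ steps, namely $\ell_0=m/2$ when $c'=12$ and $\ell_0=m$ when $c'=6$; at that point $x_{i+c'\ell_0}=x_i$, so $x_i^{3^{\ell_0}}=x_i^{4^{\ell_0}}$ and hence $x_i^{4^{\ell_0}-3^{\ell_0}}=1$. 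In the case $m-2k\equiv 0$~mod~$12$ this is exactly $x_i^{4^{m/2}-3^{m/2}}=1$, which finishes that case.

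It remains to upgrade $x_i^{4^m-3^m}=1$ to $x_i^{4^{m/2}+3^{m/2}}=1$ in the case $m-2k\equiv 6$~mod~$12$, exactly as at the end of Theorem~\ref{thm:(m-2k)=0mod4}. Writing $N=4^{m/2}+3^{m/2}$, I would combine $x_i^{4^m-3^m}=1$ with the order bound $x_i^{N^2}=1$ and compute $\gcd(N^2,4^m-3^m)$. Since $4^m-3^m=N(4^{m/2}-3^{m/2})$ and $N$ is odd, any common factor of $N$ and $4^{m/2}-3^{m/2}$ divides their difference $2\cdot 3^{m/2}$ and their sum $2\cdot 4^{m/2}$, hence divides $2$ and is therefore $1$; thus $\gcd(N^2,4^m-3^m)=N$ and $x_i^N=1$. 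I expect the only genuine obstacle to lie in the arithmetic bookkeeping of the second and fourth steps: unlike the $n=4$ case, the presence of the prime $3$ means one must simultaneously track the parity of $k$ and the (non-)divisibility of $m$ by $3$ in order to secure both $(m,k')=1$ and the exact value $\gcd(c',6m)=c'$ that makes the iteration close up after precisely $m/2$ or $m$ steps; everything else is routine.
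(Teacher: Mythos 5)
Your proposal is correct and follows essentially the same route as the paper's proof: reduce via Theorem~\ref{mainthm:Isos} to $k'=m/2-6$ or $k'=m/2-3$, iterate the relation $x_i^3=x_{i+m-2k'}^4$ around the index cycle to get $x_i^{4^{m/2}-3^{m/2}}=1$ (resp.\ $x_i^{4^m-3^m}=1$), and in the second case finish with the same gcd computation against the order $(4^{m/2}+3^{m/2})^2$. Your only addition is the explicit verification that $\gcd(m,k')=1$ (via $3\nmid m$ and the parity of $k'$), a hypothesis of Theorem~\ref{mainthm:Isos} that the paper leaves implicit.
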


\begin{proof}
By Theorem~\ref{mainthm:orders} the order $|J_6(m,k)'|=(4^{m/2}+\epsilon 3^{m/2})^2$ where $\epsilon =-1$ if $m-2k\equiv 0$~mod~$12$ and $\epsilon=1$ if $m-2k\equiv 6$~mod~$12$. By Theorem~\ref{thm:J6'finite}  we have that $J_6(m,k)'$ is abelian and is generated by $x_0$ and $x_m$. We show that $x_i^{4^{m/2}+\epsilon 3^{m/2}}=1$ for any $0\leq i< 6m$, so in particular $x_0^{4^{m/2}+\epsilon 3^{m/2}}=1$ and $x_m^{4^{m/2}+\epsilon 3^{m/2}}=1$, as this implies
that $J_6(m,k)'\cong \Z_{4^{m/2}+\epsilon 3^{m/2}}\oplus\Z_{4^{m/2}+\epsilon 3^{m/2}}$. Note that the order of $J_6(m,k)'$ implies that $x_i^{(4^{m/2}+\epsilon 3^{m/2})^2}=1$. By Theorem~\ref{thm:J6'finite}  we have that the relation $x_i^3=x_{i+m-2k}^4$ holds in $J_6(m,k)'$.

Suppose that $m-2k\equiv 0$~mod~$12$ and let $k'=m/2-6$. Now $m-2k\equiv 0$~mod~$12$ implies that $k\equiv m/2$~mod~$6$ so $k+k'\equiv m$~mod~$6$ and hence $J_6(m,k)\cong J_6(m,k')$ by Theorem~\ref{mainthm:Isos}. The relation $x_{i}^3=x_{i+m-2k'}^4$ is simply $x_i^3=x_{i+12}^4$ so
\[x_i^{3^{m/2}}=x_{i+12}^{3^{m/2-1}\cdot 4}=x_{i+24}^{3^{m/2-2}\cdot 4^2}=\ldots =x_{i+12m/2}^{4^{m/2}}=x_{i}^{4^{m/2}}\]
and thus $x_i^{4^{m/2}-3^{m/2}}=1$.

Suppose then that $m-2k\equiv 6$~mod~$12$ and let $k'=m/2-3$. Now $m-2k\equiv 6$~mod~$12$ implies that $k\equiv 3+m/2$~mod~$6$ so $k+k'\equiv m$~mod~$6$ and hence $J_6(m,k)\cong J_6(m,k')$ by Theorem~\ref{mainthm:Isos}. The relation $x_{i}^3=x_{i+m-2k'}^4$ is simply $x_i^3=x_{i+6}^4$ so
\[x_i^{3^{m}}=x_{i+6}^{3^{m-1}\cdot 4}=x_{i+12}^{3^{m-2}\cdot 4^2}=\ldots =x_{i+6m}^{4^{m}}=x_{i}^{4^{m}}\]
and thus $x_i^{4^{m}-3^{m}}=1$. But we also have $x_i^{(4^{m/2}+3^{m/2})^2}=1$ and
\begin{alignat*}{1}
\left( (4^{m/2}+3^{m/2})^2, 4^{m}-3^{m}\right)&= (4^{m/2}+3^{m/2}) \left( 4^{m/2}+3^{m/2}, 4^{m/2}-3^{m/2}\right)\\
&=4^{m/2}+3^{m/2}
\end{alignat*}
so $x_i^{4^{m/2}+3^{m/2}}=1$.
\end{proof}


\section{Applications to generalized Fibonacci groups}\label{sec:Fibonacci}

The \em generalized Fibonacci groups \em
\[F(r,n,l,s)=G_n((x_0x_1\ldots x_{r-1})(x_{(l-1)+r}x_{(l-1)+r+1}\ldots x_{(l-1)+r+s-1})^{-1})\]
were introduced in~\cite{CRAustral} as a generalization of the groups $H(r,n,s)=F(r,n,1,s)$ of~\cite{CampbellRobertsonLMS75}, and of the groups $F(r,n,l)=F(r,n,l,1)$ of~\cite{CampbellRobertsonEMS75}, which in turn generalize the Fibonacci groups $F(r,n)$ of~\cite{JWW}. In Corollary~\ref{maincor:Fibonacci} we use Theorems~\ref{mainthm:structure},\ref{mainthm:orders},\ref{mainthm:Isos} to obtain infinite families of finite metacyclic generalized Fibonacci groups.

\begin{maincorollary}\label{maincor:Fibonacci}
Let $n=4$ or $6$, $m\geq 1$ and suppose $m\equiv \pm 1$~mod~$n$. Then $J_n(m,k)$ has a unique normal subgroup $N$ of index~$n$. Moreover, $N$ is metacyclic and
\begin{itemize}
  \item[(a)] if $n=4$ or $6$ and $J_n(m,k)\cong J_n(m,-1)$ then $N\cong F(m+1,n,n/2)$;
  \item[(b)] if $n=4$ and $J_n(m,k)\cong J_n(m,1)$ then $$N\cong \begin{cases}
    F(m+3,4,3,3) &\mathrm{if}~m\equiv 1~\mathrm{mod}~4,\\
    H(m+2,4,2) &\mathrm{if}~m\equiv -1~\mathrm{mod}~4;
  \end{cases}$$
  \item[(c)] if $n=6$ and $J_n(m,k)\cong J_n(m,1)$ then $$N\cong \begin{cases}
    F(m+5,6,4,5) &\mathrm{if}~m\equiv 1~\mathrm{mod}~6,\\
    F(m+2,6,2,2) &\mathrm{if}~m\equiv -1~\mathrm{mod}~6;
    \end{cases}$$
  \item[(d)] if $n=6$ and $J_n(m,k)\cong J_n(m,3)$ then $N\cong H(m+3,6,3)$.
\end{itemize}
The order of $N$ can be found from Theorem~\ref{mainthm:orders}.
\end{maincorollary}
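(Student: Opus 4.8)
Throughout write $J=J_n(m,k)$, and recall from the text that $J^{\mathrm{ab}}\cong\Z_{nm}$ is generated by the image of $y$ and that $|J|=nm\,a_n(m,k)$.

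\textbf{Existence and uniqueness of $N$.} The plan is to build $N$ as the kernel of the composite $\phi\colon J\twoheadrightarrow J^{\mathrm{ab}}=\Z_{nm}\twoheadrightarrow\Z_n$ (reduction modulo $n$); this is normal of index $n$ with $J/N\cong\Z_n$. For uniqueness, let $N_0\trianglelefteq J$ with $[J:N_0]=n$ and put $Q=J/N_0$. If $n=4$ then $|Q|=4$ is automatically abelian. If $n=6$ the only non-abelian possibility is $Q\cong S_3$; but $m\equiv\pm1\pmod6$ forces $m$ odd, so in any map onto $S_3$ both $t$ and $y$ map to transpositions, and then the relation $y^{m-k}t^3y^kt^2=1$ collapses (using $\bar t^2=1$) to $\bar t=\bar y^{-m}=\bar y$, giving a cyclic image --- impossible. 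Hence $Q$ is abelian, $N_0\supseteq J'$, and $N_0/J'$ is the unique subgroup of index $n$ in the cyclic group $\Z_{nm}$; thus $N_0=N$. Being the unique subgroup of its kind, $N$ is characteristic, so by Theorem~\ref{mainthm:Isos} it suffices to treat the representatives $k\in\{-1,1,3\}$.

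\textbf{Metacyclicity.} Since $n$ is even and $m\equiv\pm1\pmod n$, the integer $m$ is odd, so $m-2k$ is odd and therefore $m-2k\not\equiv0\pmod n$. With $(m,k)=1$, Theorem~\ref{mainthm:structure}(c) gives that $J$ is metacyclic with $J'$ cyclic. As subgroups of metacyclic groups are metacyclic, $N$ is metacyclic; concretely $N=J'\rtimes\langle y^n\rangle$ is cyclic-by-cyclic.

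\textbf{Identification.} I would identify $N$ by turning the question around, using the uniqueness just proved. Each named group $F_\ast$ is a cyclically presented group $G_n(w_\ast)$, and its shift extension $E_\ast=F_\ast\rtimes_\theta\Z_n$ has a two-generator two-relator presentation $\langle t,y\mid t^n,\,W_\ast\rangle$, where $W_\ast$ arises from $w_\ast$ by the substitution $x_i=t^iyt^{-i}$; here $F_\ast=\ker(E_\ast\to\Z_n)$ is normal of index $n$. Using the telescoping identity $\prod_{i=0}^{r-1}t^iyt^{-i}=(yt)^{r-1}yt^{-(r-1)}$ and the change of generator $s=yt$, a short sequence of Tietze transformations, together with reduction of the $t$-exponents modulo $n$, rewrites $W_\ast$ as a relator of the shape $s^{m-k}t^3s^kt^2$; that is, $E_\ast\cong J_n(m,k)$ for the appropriate representative $k$. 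For example, with $n=4$, $k=-1$, the extension of $F(m+1,4,2)$ becomes $\langle s,t\mid t^4,\,s^{-1}t^3s^{m+1}t^2\rangle=J_4(m,m+1)\cong J_4(m,-1)$ by Lemma~\ref{lem:easyisom}. Since the isomorphism $E_\ast\cong J_n(m,k)$ carries $F_\ast$ to a normal subgroup of index $n$, the uniqueness established above forces that subgroup to be $N$, so $N\cong F_\ast$. Finally $|N|=|J|/n=m\,a_n(m,k)$, which is read off from Theorem~\ref{mainthm:orders}.

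\textbf{Main obstacle.} The real work lies in the Tietze reduction $E_\ast\cong J_n(m,k)$: telescoping the product of conjugates, choosing the correct auxiliary generator, and --- most delicately --- tracking the reductions of the $t$-exponents modulo $n$, which is exactly what produces the split into the subcases $m\equiv1$ and $m\equiv-1\pmod n$ in parts (b), (c) and the distinction between the $F$- and $H$-families in (b)--(d). A Reidemeister--Schreier calculation on $(J,N)$ with the transversal $\{1,t,\dots,t^{n-1}\}$ furnishes an independent check, realizing $N$ directly as a cyclically presented group whose defining word matches $w_\ast$ up to the shift and inversion symmetries in the simplest cases (such as $n=6$, $k\equiv-1$); in the other cases the defining word is genuinely shorter than that of $F_\ast$, which is precisely why the shift-extension comparison above, rather than a naive word match, is the right tool.
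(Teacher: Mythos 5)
Your proposal is correct and follows essentially the same route as the paper: establish that any index-$n$ normal subgroup is unique (so that it must coincide with the kernel of $J\twoheadrightarrow\Z_n$), deduce metacyclicity from Theorem~\ref{mainthm:structure} together with the fact that $m$ odd forces $m-2k\not\equiv 0\bmod n$, and then identify $N$ by recognising the shift extension $F(r,n,l,s)\rtimes_\theta\Z_n=\pres{y,t}{t^n,\,y^rt^{s+(l-1)}y^{-s}t^{-r-(l-1)}}$ as a group $J_n(m,k')$ and invoking uniqueness. The only differences are cosmetic: you prove the quotient $J/N_0$ is abelian via the classification of groups of order $4$ and $6$ (ruling out $S_3$ by an abelianization argument) where the paper shows it is cyclic by a direct case analysis on which power of $t$ lies in $N_0$, and you verify only one of the parameter matchings explicitly, whereas the paper lists (without proof) the specific identifications $E(m+1,n,n/2,1)\cong J_n(m,-1)$, $E(m+3,4,3,3)\cong J_4(m,-3)$, etc., that produce the subcases in (b)--(d).
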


\begin{proof}
Let $\epsilon = \pm 1$ so $m \equiv \epsilon \mod n$.  Let $N$ be a normal subgroup of index $n$ in $J = J_n(m,k)$. The fact that $m \equiv \epsilon \mod n$ implies that $y^mN = y^\epsilon N$. Next, we claim that $Q = J/N$ is cyclic. Consider the element $tN \in Q$. If $t \in N$, then $Q = \gpres{yN}$ is cyclic. If $t^2 \in N$, then $y^mt^3 \in N$ so $y^{-\epsilon}N = t^3N$ and so $Q = \gpres{t^3N}$ is cyclic. If $t^3 \in N$, then $y^mt^2 \in N$ so $y^{-\epsilon}N = t^2N$ and so $Q = \gpres{t^2N}$ is cyclic. If none of $t, t^2$, or $t^3$ is in $N$, then the element $tN$ has order $n$ in $Q = J/N$, which is therefore cyclic in all cases. Now the fact that $Q = J/N$ is abelian implies that $y^mt^5 \in N$ so $y^{-\epsilon}N = t^{\pm 1}N$ depending on whether $n = 4$ or $6$. Thus $Q = J/N$ is cyclic of order $n$, generated by $tN$, and the element $yN$ is completely determined by $\epsilon \equiv m \mod n$.  Since the quotient homomorphism $J \ra J/N$ is completely determined, so is its kernel $N$. By Theorem~\ref{mainthm:structure} the group $J$ is metacyclic, and hence so is $N$.

Setting $E=E(r,n,l,s)=F(r,n,l,s)\rtimes_\theta \Z_n$, where $\theta(x_i)=x_{i+1}$ we have that $F(r,n,k,l)$ is a normal subgroup of index~$n$ in $E(r,n,l,s)$ and
\[ E(r,n,l,s)=\pres{y,t}{t^n,y^rt^{s+(l-1)}y^{-s}t^{-r-(l-1)}}.\]
For certain choices of the parameters $E(r,n,l,s)$ is isomorphic to a group $J=J_n(m,k)$, and so $F(r,n,l,s)$ is the unique normal subgroup of $J$.
In part~(a) we have that $E(m+1,n,n/2,1)\cong$ $J_n(m,-1)$;
in part~(b), if $m\equiv 1$~mod~$4$ then $E(m+3,4,3,3)\cong J_4(m,-3) \cong J_4(m,1)$, by Theorem~\ref{mainthm:Isos}
and if $m\equiv -1$~mod~$4$ then $E(m+2,4,1,2)\cong J_4(m,-2) \cong J_4(m,1)$;
in part~(c), if $m\equiv 1$~mod~$6$ then $E(m+5,6,4,5)\cong J_6(m,-5) \cong J_6(m,1)$,
and if $m\equiv -1$~mod~$4$ then $E(m+2,6,2,2)\cong J_6(m,-2) \cong J_6(m,1)$;
in part~(d) we have that $E(m+3,6,1,3)\cong J_6(m,-3) \cong J_6(m,3)$.
\end{proof}

For odd $r$, the groups $H(r,4,2)$ were shown to be finite metacyclic groups in~\cite{Brunner} where the orders are also given, thus when $m\equiv -1$~mod~$4$ the groups in part~(b) were already known to be finite. Similarly, in the case $n=4$, $m\equiv 1$~mod~$4$ the groups of part~(a) are the groups $F(4\alpha+2,4,2)$ ($\alpha\geq 4$) and these are isomorphic to the groups $F(4\alpha+2,4)$ which were shown to be finite metabelian groups in~\cite{Seal}. Moreover, it was shown in~\cite{ThomasCommAlg} that $F(4\alpha+2,4)\cong H(4\alpha+3,4,2)$ and so $F(4\alpha+2,4)$ is a finite metacyclic group. The group $F(4,4,2)$ (which occurs in part~(a)) was observed in~\cite{CRToddCoxeter} to have order 39. The group $F(6,6,3)$ (which occurs in part~(a)) was identified in~\cite{CRRT} as being finite of order 10655. All other finite groups in Corollary~\ref{maincor:Fibonacci}
appear to be new observations.

\begin{acknowledgements}
We would like to thank Clayton Petsche for helpful conversations, Martin Edjvet for insightful comments on a draft of this article, Alex Fink for pointing out a mistake in an early version of Theorem~\ref{thm:newbicyclicabelianization}, and the referee for the careful reading of this article.
\end{acknowledgements}


\end{document}